\numberwithin{equation}{section}
\newcommand{\f}{\frac}
\newcommand{\p}{\partial}
\newcommand{\Z}{\mathbb{Z}}
\newcommand{\N}{\mathbb{N}}
\newcommand{\R}{\mathbb{R}}
\newcommand{\E}{\mathbb{E}}
\let\P\BP
\newcommand{\eps}{\varepsilon}
\newcommand{\pnorm}[2]{\left\|#1\right\|_{#2}}
\newcommand{\floor}[1]{\lfloor #1 \rfloor}
\newcommand{\ind}{{\mathbf 1}}
\newcommand{\limity}[1]{\lim_{#1 \to \infty}}
\newcommand\un{\underline}
\newcommand{\calC}{\mathcal{C}}
\newcommand{\calF}{\mathcal{F}}
\newcommand{\calG}{\mathcal{G}}
\newcommand{\calN}{\mathcal{N}}
\newcommand{\calR}{\mathcal{R}}
\theoremstyle{definition}
\newtheorem{thm}{Theorem}[section]
\newtheorem{cor}[thm]{Corollary}
\newtheorem{lemma}[thm]{Lemma}
\newtheorem{prop}[thm]{Proposition}
\newtheorem{conjecture}[thm]{Conjecture}
\newtheorem{remark}[thm]{Remark}
\theoremstyle{definition}
\newcommand{\email}[1]{{\it  E-mail address:}\  \textsf{#1}\\*\protect}
\newcommand{\address}[1]{\small \textsc{#1}\\*\protect}
\author{Spencer Frei $\mbox{}^{1}$ \hspace{1cm} Edwin Perkins $\mbox{}^{2}$ }
\title{A lower bound for $p_c$ in range-$R$ bond percolation in two and three dimensions}
\begin{document}
\maketitle
\begin{center}
\address{Department of Statistics, UCLA}
\address{8125 Math Sciences Building, Los Angeles, CA 90095, USA}
\email{spencerfrei@ucla.edu}
\mbox{}\\
\address{ Department of Mathematics, The University of British Columbia,}
\address{ 1984 Mathematics Road, Vancouver, B.C., Canada V6T 1Z2}
\email{perkins@math.ubc.ca}
\end{center}

\begin{abstract}
We use the connection between bond percolation and SIR epidemics to establish lower bounds for the critical percolation probability in $2$ and $3$ dimensions as the range becomes large.  The bound agrees with the conjectured asymptotics for the long range critical probability, refines results of M. Penrose, and complements results of van der Hofstad and Sakai in dimensions greater than $6$. 
\end{abstract}
\section{Introduction}
\setcounter{equation}{0}
\subsection{Range-$R$ bond percolation}
We study the critical probability in range-$R$ bond percolation.   For a parameter $R\in \N$ called the \textit{range}, define $\Z^d/R = \{ x/R : x\in \Z^d\}$.  We construct an undirected graph $\Z^d_R$ with vertex set $\Z^d/R$ and assign edges between two vertices $x,y\in \Z^d/R$ if $0<\pnorm{x-y}\infty \leq 1$, where $\pnorm \cdot \infty$ denotes the $\ell^\infty$ norm on $\R^d$.  Write $x\sim y$ if there exists an edge between $x$ and $y$ in $\Z^d/R$, let $\calN(x)$ denote the 
set of neighbours of $x$, and denote its size by
\[ V(R) := |\calN(x)|=|\{ y\in \Z^d/R, y\sim 0 : 0 < \pnorm y \infty \leq 1 \}| = (2R+1)^d-1 \ , \]
where $|S|$ denotes the cardinality of a finite set $S$.
Let $E(\Z^d_R)$ denote the set of edges in $\Z^d_R$.
The structure of this graph is unchanged if one scales the lattice so that the vertex set is $\Z^d$ and there are edges between points $x,y\in \Z^d$ when $0<\pnorm{x-y} \infty\leq R$, but for the remainder of this paper we shall focus solely on the graph $\Z^d_R$ for $d\le 3$.  If $x\sim y$, we let $(x,y)$ or $(y,x)$ denote the edge between $x$ and $y$.

\vfill
{\footnoterule
\footnotesize
\noindent 
\today\\
AMS 2000 {\it subject classifications}. Primary 60K35. 
Secondary 60J68, 60J80, 92D30. 
 \\
{\it Keywords and phrases}. Long range bond percolation,  critical probability, SIR epidemic.\\
 \\
{\it Running head}. A lower bound for $p_c$\\
1. Supported by an NSERC Discovery grant. \\ 
2. Supported by an NSERC Discovery grant.\\}
\pagebreak

\vskip 0.3in 

We construct a random, undirected subgraph $G=G_R$ with vertex set $\Z^d_R$ by considering a collection of i.i.d. Bernoulli random variables $\{B(e):e\in E(\Z^d_R)\}$, each with parameter $p >0$.   An edge $e$ is \textit{open} iff $B(e)=1$, while edges with $B(e)=0$ are  \textit{closed}. $G$ is the resulting subgraph with edge set equalling the set of open edges.   Two vertices $x$ and $y$ in $\Z^d_R$  are \textit{connected} if there is a path between $x$ and $y$ consisting of open edges; we denote this event by $x\leftrightarrow y$. Let $\lambda =V(R)p> 0$ denote the mean number of neighbours in $G$ of any vertex.
The cluster $\calC_x$ in $G$ containing $x$ is
\[ \calC_x := \{ y\in \Z^d/R : x \leftrightarrow y \}, \]
and the \textit{percolation probability}, $q(p)$, is denoted by
\[q(p) = \P_p( |\calC_0| = \infty ). \]

The \textit{critical probability} is defined by
\[ p_c =p_c(R):= \inf \{ p : q(p) > 0 \} \ , \]
and the associated critical value of $\lambda$ is 
\[ \lambda_c = \lambda_c(R):= p_c(R)V(R) \ . \]
The obvious monotonicity in $p$ shows that $q(p)=0$ for $p\in[0,p_c)$ and $q(p)>0$ for $p\in(p_c,1]$. 
M. Penrose~\cite{penrose} showed that
\begin{equation}\limity R \lambda_c(R) = 1 \ . \label{pcconv}
\end{equation}
If we view the set of vertices distance $n$ in the $G$-graph distance from the origin as 
a set-valued ``interactive branching process" $\eta_n$, then for $R$ large and $\lambda$ bounded, $\eta_n$ should be well-approximated by an ordinary Galton-Watson branching process with offspring mean $\lambda$.  This is because with so many potential percolation steps from generation $n$ to $n+1$, it is unlikely that the process will take a step to 
a previously visited site.  As a result one expects the critical mean $\lambda_c(R)$ to be close to $1$, the critical mean for the GW branching process. (This intuition is of course well-known and was pointed by Penrose, among others.) Write $a(R)\sim b(R)$ iff the ratio approaches $1$ as $R\to\infty$.  Van der Hofstad and Sakai \cite{remco}
have obtained finer asymptotics on $\lambda_c(R)$ for $d>6$ using the lace expansion:
\begin{align}\label{highd}
&\lambda_c(R)-1\sim \frac{\theta_d}{R^d}
\end{align}
where $\theta_d$ has an explicit expression in terms of a random walk with uniform steps on $[-1,1]^d$.
The extension of \eqref{highd} to $d\ge 4$ (with logarithmic corrections in $d=4$) is the subject of ongoing work of one of us [EP] with Xinghua Zheng.  In the next subsection we recall a parallel conjecture (Conjecture~\ref{pcR}) for $d=2,3$ suggested in Lalley-Perkins-Zheng~\cite{lalleyperkinszheng}. Our main result (Theorem~\ref{thm:lambdac.lowerbound} below) is a lower bound on $\lambda_c(R)$ which confirms the conjecture aside from the constant.  
The first step is to make the above connection between bond percolation and ``interactive branching processes" more precise in the next subsection.

\subsection{SIR epidemic models and bond percolation}
\label{sec:intro.connection.sir}
We recall a well-known connection between bond percolation and an epidemic model which dates back at least to \cite{mollison}.
For the SIR epidemic model on $\Z^d_R$, each vertex
$x\in\Z^d/R$ can be in one of three possible states: we denote $x\in \xi_n$ if $x$ is susceptible at time $n$; $x\in \eta_n$ if $x$ is infected at time $n$; and $x\in \rho_n$ if $x$ is recovered at time $n$.  Each vertex $x$ is in exactly one of the three states, so that for each time $n$, 
\[ \Z^d/R = \xi_n \dot\cup \eta_n \dot\cup \rho_n \ . \]
The epidemic starts from finite initial configurations of infected sites, $\eta_0$, and recovered sites, $\rho_0$.   
An infected site $x\in \eta_n$ infects a susceptible neighbour $y\in \xi_n,\ y\sim x,$ with probability $p=p(R)$, where the infection events are conditionally independent given the current configuration of states.  Infected sites are infected for unit time, after which they become recovered, and recovered sites are immune from infection, so that if $x\in \eta_n$, then $x\in \rho_k$ for all $k \geq n+1$.  Note that
\begin{equation}\rho_n=\rho_0\dot\cup\eta_0\dot\cup\dots\dot\cup\eta_{n-1}, \text{ and is finite for all }n.\label{rangedef}
\end{equation}

We let
\begin{equation}
\p C_n:=\{e=(x,y)\in E(\Z^d_R):x\in\eta_n,y\in\xi_n\}
\label{eq:boundaryCndef}
\end{equation} be the set of possible infection edges leading to $\eta_{n+1}$, where in describing edges $(x,y)$ in $\p C_n$ we will use the convention that the first coordinate refers to the site in $\eta_n$. So that for any given edges $(x_i,y_i),i\le m$, the event $\p C_n=\{(x_i,y_i):i\le m\}$ will include this specification. (This amounts to choosing a particular representative for an equivalence class in our notation.) Let $\calF_n = \sigma(\rho_0,\eta_k, k \leq n)$. Clearly the finite random set $\p C_n$ is $\sigma(\eta_n,\rho_n)\subset\calF_n$-measurable because $\xi_n=(\eta_n\cup\rho_n)^c$. 
If $y$ is a site in $\Z_R^d$, let 
\begin{equation}
D_n(y)=\{x\in\eta_n:(x,y)\in\p C_n\}.
\label{eq:Dndef}
\end{equation}
If 
$S=\{(x_i,y_i):i\le m\}$ is a set of distinct   edges in $\Z^d_R$ and $V\subset \{y_i:i\le m\}:=V_2$, then the above description implies that 
\begin{equation}\label{etalaw}
\P(\eta_{n+1}=V|\calF_n)= \prod_{y\in V_2\setminus V}(1-p)^{|D_n(y)|}\prod_{y\in V}[1-(1-p)^{|D_n(y)|}]\text{ a.s. on }\{\p C_n=S\}.
\end{equation}
Clearly \eqref{etalaw} and the joint law of $(\eta_0,\rho_0)$ uniquely determines
the law of the SIR epidemic $\eta$. 
Since $\rho_{n+1}=\rho_n\cup\eta_n$, \eqref{etalaw} also gives the (time-homogeneous)   Markov property of $(\eta_n, \rho_n)$:  
\begin{equation} \P( (\eta_{n+1},\rho_{n+1}) \in \cdot\ \big| \calF_n) = \P( (\eta_{n+1},\rho_{n+1}) \in \cdot\ \big| \eta_n, \rho_n)=\P_{\eta_n,\rho_n}((\eta_1,\rho_1)\in \cdot)\ , \label{markovprop}\end{equation}
where $\P_{\eta_0,\rho_0}$ is the law of $(\eta,\rho)$ starting at $(\eta_0,\rho_0)$, whenever $(\eta_0,\rho_0)$ are disjoint finite sets of sites.
Unless otherwise indicated we will assume the SIR epidemic under $\P$ starts from $\eta_0 = \{0\}$ and $\rho_0 = \emptyset$, and assume $p = p(R)$, is such that
\[ \lambda(R) := p(R) V(R) \ge 1. \]
We may use the edge percolation variables $\{B(e):e\in E(\Z^d_R)\}$ from the last section to define the infection dynamics of $\eta$ as follows: 
Every infected--susceptible pair $(x,y)\in\p C_n$ has a successful infection iff $B(x,y)=1$.    {\it The dynamics of the SIR epidemic force each bond variable $B(x,y)$ to be used at most once in defining the epidemic process, precisely when $x$ is infected and $y$ is susceptible or conversely.} The fact that the above specification of the infection dynamics leads to \eqref{etalaw} and hence defines an SIR epidemic, is immediate from Lemma~\ref{lem:unexplored-independent} below.

We collect the dynamics here:
\begin{align}
\nonumber \eta_{n+1} &= \bigcup_{x\in \eta_n} \{ y\in \xi_n : B(x,y)=1 \} , \\
\rho_{n+1} &= \rho_n \cup \eta_n ,\label{sirperc} \\
\nonumber\xi_{n+1} &= \xi_n \setminus \eta_{n}. 
\end{align}
Let $d_G(x,y)$ denotes the graph distance in the percolation graph, $G$, between $x,y\in\Z^d_R$ (it may be infinite), and similarly let $d_G(A,x)$ denote the distance in $G$ between a set of vertices $A$ and $x\in\Z^d_R$.  An easy induction (see below) shows that if $(\eta_0,\rho_0)=(\{0\},\emptyset)$, then for  $n\in\Z_+$, $\eta_n=\{x\in\Z^d_R:\,d_G(0,x)=n\}$.  More generally for a given pair of disjoint finite sets of sites in $\Z^d_R$, $(\eta_0,\rho_0)$, let $G(\rho_0)$ be the percolation graph but where all edges containing a vertex in $\rho_0$ are closed.  Then for $(\eta,\rho)$ starting at $(\eta_0,\rho_0)$, 
\begin{equation}\label{graphdistance}\eta_n=\{x\in\Z_R^d:d_{G(\rho_0)}(\eta_0,x)=n\}:=\eta_n^{\eta_0,\rho_0}.
\end{equation}
We present the inductive argument for the above. If $n=0$, this is obvious.  Assume the result for $n$.  Assume first $x\in\eta_{n+1}$. Then by \eqref{sirperc}, $x\in\xi_n\subset\rho_0^c$ and there is an $x'\in\eta_n$ such that $B(x',x)=1$.  By hypothesis, $d_{G(\rho_0)}(\eta_0,x')=n$.  Since $x'\notin\rho_0$ (or else the above distance would be $\infty$) this implies that 
$d_{G(\rho_0)}(\eta_0,x)\le n+1$.  As $x\notin \cup_{k\le n}\eta_k=\rho_{n}\cup\eta_n$ ($x\in\xi_n$ is disjoint from this union by \eqref{sirperc}), the induction hypothesis imples $d_{G(\rho_0)}(\eta_0,x)> n$ and so $d_{G(\rho_0)}(\eta_0,x)=n+1$.  Conversely assume that $d_{G(\rho_0)}(\eta_0,x)=n+1$. This again implies $x\not\in\rho_0$. There is an $x'\notin\rho_0$ so that $d_{G(\rho_0)}(\eta_0,x')=n$, $x'\sim x$ and $B(x',x)=1$. By hypothesis $x'\in\eta_n$. Also by the induction hypothesis $x\not\in\cup_{k=1}^n\eta_k=\rho_{n}\cup\eta_n$ (or else $d_{G(\rho_0)}(\eta_0,x)\le n$). This means $x$ must be in $\xi_n$ and so by \eqref{sirperc} we conclude that $x\in\eta_{n+1}$.  This completes the induction.

Here is one simple consequence of the coupling of initial conditions that comes from this construction. It will be used in Section~\ref{extinctionpf}.
\begin{lemma}
\label{lem:wipe.away.history} For any disjoint finite sets $\eta_0,\rho_0\subset \Z^d_R$ and any $n\in\Z_+$,\\
(a) $\P_{\eta_0,\rho_0}(|\cup_{i=0}^n\eta_i|\le r)\ge \P_{\eta_0,\emptyset}(|\cup_{i=0}^n\eta_i|\le r)\quad\forall r\ge0$.

\noindent(b) $\E_{\eta_0,\emptyset}(|\rho_n|)\le |\eta_0|E_{\{0\},\emptyset}(|\rho_n|)$.
\end{lemma}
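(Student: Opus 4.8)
The plan is to run all the epidemics in sight on one common family of bond variables $\{B(e):e\in E(\Z^d_R)\}$ and to deduce both inequalities from the graph-distance description \eqref{graphdistance}, which turns the relevant unions of infected sets into deterministic functions of the percolation graph.

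For (a) I would couple the $(\eta_0,\rho_0)$-epidemic and the $(\eta_0,\emptyset)$-epidemic through the same $\{B(e)\}$. Closing every edge meeting $\rho_0$ only deletes edges, so $G(\rho_0)$ is a subgraph of $G=G(\emptyset)$ and hence $d_{G(\rho_0)}(\eta_0,x)\ge d_G(\eta_0,x)$ for every $x$. By \eqref{graphdistance} this yields the deterministic inclusion
\[
\bigcup_{i=0}^n\eta_i^{\eta_0,\rho_0}=\{x:d_{G(\rho_0)}(\eta_0,x)\le n\}\subseteq\{x:d_G(\eta_0,x)\le n\}=\bigcup_{i=0}^n\eta_i^{\eta_0,\emptyset},
\]
so $\big|\bigcup_{i=0}^n\eta_i^{\eta_0,\rho_0}\big|\le\big|\bigcup_{i=0}^n\eta_i^{\eta_0,\emptyset}\big|$ on the coupled space; consequently $\{\,|\bigcup_{i=0}^n\eta_i^{\eta_0,\emptyset}|\le r\,\}\subseteq\{\,|\bigcup_{i=0}^n\eta_i^{\eta_0,\rho_0}|\le r\,\}$, and taking probabilities (each marginal being the asserted law) gives (a).

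For (b) I would take $\rho_0=\emptyset$, write $\eta_0=\{x_1,\dots,x_k\}$ with $k=|\eta_0|$, and note that for $n\ge1$, \eqref{rangedef} and \eqref{graphdistance} give
\[
\rho_n^{\eta_0,\emptyset}=\bigcup_{i=0}^{n-1}\eta_i^{\eta_0,\emptyset}=\{x:d_G(\eta_0,x)\le n-1\}=\bigcup_{j=1}^k\{x:d_G(x_j,x)\le n-1\}=\bigcup_{j=1}^k\rho_n^{\{x_j\},\emptyset},
\]
using $d_G(\eta_0,x)=\min_{1\le j\le k}d_G(x_j,x)$ and the same graph $G$ throughout (the case $n=0$ is the trivial $0\le0$). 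Hence $|\rho_n^{\eta_0,\emptyset}|\le\sum_{j=1}^k|\rho_n^{\{x_j\},\emptyset}|$ pointwise; taking expectations and invoking translation invariance of $\Z^d_R$ and of the law of $\{B(e)\}$, so that $\E\big(|\rho_n^{\{x_j\},\emptyset}|\big)=\E_{\{0\},\emptyset}(|\rho_n|)$ for each $j$, gives $\E_{\eta_0,\emptyset}(|\rho_n|)\le k\,\E_{\{0\},\emptyset}(|\rho_n|)$, which is (b).

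There is no serious obstacle here; the only points needing care are that the single-configuration coupling genuinely reproduces the correct law of each epidemic (this is exactly the content of \eqref{sirperc}--\eqref{graphdistance}), that in (a) the set $\rho_0$ consists of initially \emph{recovered} rather than infected sites so that the unions in the statement range only over the $\eta_i$ and the displayed inclusion is a bona fide set inclusion, and that the exact subadditivity over the sources $x_j$ in (b) relies on starting with no recovered sites, so that ``reachable in $\le n-1$ steps from $\bigcup_j\{x_j\}$'' is literally the union of the per-source reachable sets.
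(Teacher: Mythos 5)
Your proof is correct and follows essentially the same route as the paper's: both parts rest on the single-percolation-configuration coupling and the graph-distance characterization \eqref{graphdistance}, with (a) coming from the monotonicity $d_{G(\rho_0)}(\eta_0,x)\ge d_G(\eta_0,x)$ and (b) from decomposing $\{y:d_G(\eta_0,y)<n\}$ over the individual sources together with translation invariance. No gaps.
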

\begin{proof} (a) Clearly $d_{G(\rho_0)}(\eta_0,x)\le n$ implies $d_G(\eta_0,x)\le n$, since the former gives the existence of a chain of at most $n$ open bonds in $G$ starting at a vertex in $\eta_0$ and ending at $x$ with the additional property that each vertex is not in $\rho_0$, and this clearly implies the latter.  The result is now immediate from the characterization of $\eta_n$ given in \eqref{graphdistance}. 
\medskip

\noindent (b) As we are dealing with coupled versions of $\rho_n$ all starting at $\rho_0=\emptyset$ but with different $\eta_0$'s, we let $\rho_n^{\eta_0}=\cup_{k=0}^{n-1}\eta_k^{\eta_0,\emptyset}$.  
As $\rho_0=\emptyset$ now, by \eqref{rangedef} and \eqref{graphdistance},
\[\rho_n^{\eta_0}=\{y\in\Z_R^d:d_G(\eta_0,y)< n\}=\cup_{x\in\eta_0}\{y\in\Z_R^d:d_G(x,y)< n\},\]
the last by an elementary argument.  This shows that $|\rho_n^{\eta_0}|\le \sum_{x\in\eta_0}|\rho_n^{\{x\}}|$, and so taking means we may conclude that
\[\E_{\eta_0,\emptyset}(|\rho_n|)\le\sum_{x\in\eta_0}\E_{\{x\},\emptyset}(|\rho_n|) = |\eta_0|\E_{\{0\},\emptyset}(|\rho_n|).\]
The last equality follows from translation invariance of the dynamics of $\eta$. 
\end{proof}

 We say that an epidemic \textit{survives} if with positive probability, for all $n\in \N$, we have $\eta_n\neq \emptyset$.  If with probability one, for some finite $n$ we have $\eta_n=\emptyset$, we say the epidemic becomes \textit{extinct}.  

If we have survival of the epidemic, then with positive probability there is an infinite sequence of sites $x_{k}$ such that $x_{k} \in \eta_{k}$, $x_{k} \sim x_{k-1}$, and $x_{k-1}$ infected $x_{k}$ at time $k$, and hence the edge $B(x_{k-1}, x_{k})$ is open.  This implies that with positive probability, we have percolation from $\eta_0 = \{0\}$ to infinity in range-$R$ bond percolation.  Likewise, percolation from $\eta_0$ to infinity in the percolation model induces an infinite sequence of infections and hence survival in the epidemic.  In this way, percolation to infinity is equivalent to survival of the analogous SIR epidemic on the same graph.  

To understand the large $R$ behaviour of $p_c(R)$ we will fix a parameter $\theta> 0$, rescale our model by $R^{\gamma}$, choose $p=p(R)$ so that $\lambda:=V(R)p(R)=1+\frac{\theta}{R^\gamma}$, and work with the empirical process of the rescaled epidemic model,
\[X_t^R= \f 1{R^\gamma} \sum_{x\in \eta_{\floor{tR^\gamma}}} \delta_{x/(R\sqrt {R^\gamma})}. \]
We first have resealed the range to size $1$ and then applied the usual Brownian scaling due to the time rescaling by $R^\gamma t$.
We will assume $ \theta\le 1$ which implies that $p\in(0,1)$.  In the above $\gamma>0$ must be chosen carefully if we want to obtain an interesting limit. For example, if $\gamma$ is too small, then the range $R=(R^\gamma)^{1/ \gamma}$ will be too large relative to the  scaling parameter $R^\gamma$ and the effect of suppressing infections onto recovered sites will become negligible. As a result we will recover the scaling limit of branching random walk, super-Brownian motion.  The ``correct" choice, taken from Lalley and Zheng~\cite{lalleyzheng10} (see below), is $\gamma=\frac{2d}{6-d}$ (so that for $d=2,3$, $R^\gamma=R^{d-1}\in\N$ if $R\in \N$). Let  $\Rightarrow$ denote weak convergence on the appropriate space, $M_F(\R^d)$ be the space of finite Borel measures on $\R^d$ with the topology of weak convergence, and $\mu(f)$ denote the integral  $\int fd\mu$ of a function $f$ with respect to a measure $\mu$. 
$C^2_K$ is the space of $C^2$ functions from $\R^d$ to $\R$ with compact support, and $D([0,\infty),M_F(\R^d))$ is the Skorohod space of cadlag $M_F(\R^d)$-valued paths.

\begin{conjecture} Assume $d\le 3$, $\gamma=\frac{2d}{6-d}$, and for appropriate 
$\{X_0^R\}$, $X_0^R\Rightarrow X_0$, for a compactly supported fixed finite measure $X_0$.
Then $X^R\Rightarrow X$ in $D([0,\infty),M_F(\R^d))$ as $R\to\infty$.  The limit $X$ is the unique solution to the martingale problem
\begin{equation}
 X_t(\phi) = \int \phi d X_t = X_0(\phi) + M_t(\phi) + \int_0^t X_s\left( \f{ \Delta \phi}6 + \theta \phi\right)ds - \int_0^t X_s(L_s\phi)ds \ , \ \forall \phi\in C^2_K,
 \label{eq:xtsuperbmdef}
\end{equation}
where $X$ is a continuous $M_F(\R^d)$-valued process, $L_s$ is the local time of $X$, i.e., satisfies $\int_0^t X_s(\psi) ds = \int L_t(x) \psi(x) dx$, and $M(\phi)$ is a continuous martingale with $\langle M(\phi)\rangle_t = \int_0^t X_s(\phi^2) ds$.
\end{conjecture}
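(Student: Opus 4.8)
The plan is to follow the standard route for deriving a measure-valued diffusion limit from a rescaled interacting particle system: (i) extract a semimartingale decomposition of $t\mapsto X^R_t(\phi)$ from the one-step dynamics \eqref{etalaw}--\eqref{sirperc}; (ii) identify the limiting drift and martingale bracket and check that the exponent $\gamma=\frac{2d}{6-d}$ makes every term nondegenerate; (iii) prove tightness of $\{X^R\}$ in $D([0,\infty),M_F(\R^d))$; (iv) show every subsequential weak limit is a continuous process solving \eqref{eq:xtsuperbmdef}; and (v) prove uniqueness of that martingale problem. Steps (i)--(iii) are soft once the right moment bounds are in place; the heart of the matter is controlling the ``self-competition'' drift in step (iv) and the uniqueness in step (v).

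For (i), fix $\phi\in C^2_K$. By the Markov property \eqref{markovprop} and the conditional independence, given $\calF_n$, of the infections (each susceptible $y$ is infected independently with probability $1-(1-p)^{|D_n(y)|}$),
\[
X^R_{(n+1)/R^\gamma}(\phi)-X^R_{n/R^\gamma}(\phi)=\Delta M^R_n(\phi)+\E\big[\,X^R_{(n+1)/R^\gamma}(\phi)-X^R_{n/R^\gamma}(\phi)\,\big|\,\calF_n\,\big],
\]
with $\Delta M^R_n(\phi)$ a martingale increment. Taylor-expanding the conditional mean in the small parameter $p=\lambda/V(R)\asymp R^{-d}$ yields: a spatial part $p\sum_{x\in\eta_n}\sum_{y\sim x}\phi(\,\cdot\,)$ which, after Brownian rescaling and using that a single step has (approximately independent) coordinates of variance $\tfrac13+o(1)$, contributes $\tfrac1{R^\gamma}X^R_n(\tfrac16\Delta\phi)$; a branching part $\tfrac1{R^\gamma}\theta X^R_n(\phi)$ coming from $\lambda=1+\theta R^{-\gamma}$; and a negative ``wasted offspring'' part $-\tfrac1{R^\gamma}X^R_n(\phi\,L^R_n)$, arising because offspring landing on $\rho_n\cup\eta_n$ are not susceptible (such $y$ are absent from $V_2$ in \eqref{etalaw}), where $L^R_n$ is the rescaled occupation density of $\rho_n=\cup_{k<n}\eta_k$. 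The choice $\gamma=\frac{2d}{6-d}$ for $d\le3$ is exactly what makes this last term the right size: $|\rho_n|\asymp R^{2\gamma}$ spread over $\asymp R^{\gamma d/2}R^d$ lattice sites gives $\rho_n$ a per-site density of order $R^{-\gamma}$, so $p\,|\calN(x)\cap\rho_n|\asymp R^{-\gamma}$, matching the other drift terms; a smaller $\gamma$ makes it blow up and a larger one makes it vanish (reduction to super-Brownian motion). The pooling correction $1-(1-p)^k$ versus $kp$, and wasting onto $\eta_n$ rather than $\rho_n$, are lower order. A parallel computation gives $\Var(\Delta M^R_n(\phi)\mid\calF_n)=\tfrac{\lambda}{R^\gamma}X^R_n(\phi^2)+o(R^{-\gamma})$, so the bracket of the rescaled martingale converges to $\int_0^t X_s(\phi^2)\,ds$, and the increments of $X^R$ are $O(R^{-\gamma/2})$ in $L^2$, forcing continuity of any limit.

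For (iii) I would establish a compact containment condition on $[0,T]$: a bound on $\E[\sup_{t\le T}X^R_t(1)]$ (the drift of $X^R(1)$ is dominated by $\theta X^R_s(1)\,ds$ plus a martingale, so Gronwall plus a maximal inequality controls the total mass), plus a tail estimate showing mass does not escape to infinity (second-moment bounds on the spatial spread of $\eta_n$, using \eqref{graphdistance} and Lemma~\ref{lem:wipe.away.history}). Tightness of $\{X^R(\phi)\}$ then follows from the semimartingale decomposition via the Aldous--Kurtz criterion, and tightness of $\{X^R\}$ in $D([0,\infty),M_F(\R^d))$ via Jakubowski's criterion. For a subsequential limit $X$, passing to the limit in the spatial, branching, and bracket terms is routine; the delicate step is $\tfrac1{R^\gamma}\sum_{n<tR^\gamma}X^R_n(\phi\,L^R_n)\to\int_0^t X_s(L_s\phi)\,ds$, which needs the discrete occupation densities $L^R_n$ to converge -- tested against $X^R_n$ -- to the jointly continuous local time $L_s$ of $X$. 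This is where $d\le3$ is used: super-Brownian motion has jointly continuous local times precisely for $d\le3$.

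The two genuine obstacles are these. First, the a priori control and convergence of the competition term: one must show that the local density of recovered sites seen by a typical infected site, at the scale $R^{-1}\sqrt{R^\gamma}$, is well approximated by $L_s$, which appears to require sharp local central limit estimates for the range-$R$ step distribution together with moment bounds on $\eta$ and $\rho$ uniform in $R$ -- in effect a discrete, dynamic analogue of the self-intersection local time estimates for super-Brownian motion, which is why the conjecture is still open for $d=2,3$. Second, uniqueness of the martingale problem \eqref{eq:xtsuperbmdef}: this is super-Brownian motion perturbed by a mass-creation rate $\theta$ and a non-Markovian, a priori unbounded ``killing'' at rate $L_s(x)$, so duality and the log-Laplace equation do not apply directly. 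A plausible route treats $L$ as an input: for a fixed continuous field $\ell$ on $[0,T]\times\R^d$, super-Brownian motion with creation rate $\theta$ and killing rate $\ell_s(x)$ has a well-posed martingale problem (via a Dawson--Girsanov transform off super-Brownian motion, after localizing in the size of $\ell$), giving a map $\ell\mapsto$ (law of the occupation density of the solution); one then needs a contraction or monotone-comparison argument for this map. Making that rigorous, together with the self-intersection estimates, is the substantive work, which is why the statement is offered only as a conjecture.
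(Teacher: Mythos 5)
This statement is a \emph{conjecture} in the paper: the authors give no proof (they note only that the well-posedness of the martingale problem is established in Theorem~2.2 of Lalley--Perkins--Zheng, that the analogous convergence for the ``village model'' is Theorem~2 of Lalley--Zheng, and that X.~Zheng has proved the $d=2$ case in unpublished work). So there is no proof in the paper to compare yours against, and your proposal does not claim to close the gap either --- you correctly identify that the convergence of the discrete interference term $R^{-\gamma}\sum_{n<tR^\gamma}X^R_n(\phi L^R_n)$ to $\int_0^t X_s(L_s\phi)\,ds$ is the substantive open step, requiring uniform-in-$R$ control of the local density of $\rho_n$ at scale $R^{-1}\sqrt{R^\gamma}$ and its identification with the (jointly continuous, $d\le 3$) local time of the limit. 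Your scaling heuristic for $\gamma=\frac{2d}{6-d}$ is the right one: $|\rho_n|\asymp R^{2\gamma}$ spread over $\asymp R^{\gamma d/2}R^d$ sites gives $p\,|\calN(x)\cap\rho_n|\asymp R^{2\gamma-\gamma d/2-d}=R^{-\gamma}$ exactly when $\gamma(3-d/2)=d$. The semimartingale decomposition, drift/bracket identification, and tightness scheme are the standard route and match what is done for the village model in \cite{lalleyzheng10}.

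One correction and one caution. The correction: you list uniqueness of the martingale problem \eqref{eq:xtsuperbmdef} as a second genuine obstacle, but the paper states this is already settled (Theorem~2.2 of \cite{lalleyperkinszheng}), so the conjecture is purely about the convergence $X^R\Rightarrow X$; you should cite that result rather than sketch a Dawson--Girsanov/contraction argument. The caution: in step (i) you treat the pooling correction $1-(1-p)^{|D_n(y)|}$ versus $|D_n(y)|p$ as lower order; this is plausible but is itself one of the places where uniform density control on $\eta_n$ is needed (if many infected sites target the same susceptible site the correction is not negligible), so it belongs with, not separate from, the hard local-time estimates. As a research plan the proposal is sound; as a proof it is, as you say, incomplete --- which is consistent with the statement being offered only as a conjecture.
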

The ``appropriate"  $\{X_0^R\}$ are those satisfying the regularity condition (17) in Theorem 2 of Lalley and Zheng~\cite{lalleyzheng10}.  That result establishes a weak convergence result which is very close to the above. Instead of $\Z^d_{R}$, they work with a ``village model" on $\Z^d\times\{1,\dots,N\}$ where sites $(x,m)$ and $(y,n)$ are neighbours iff $x$ and $y$ are nearest neighbours in $\Z^d$, or $x=y$ and $m\neq n$. The rescaling parameter in Theorem~2 of \cite{lalleyzheng10} is $N^{2/(6-d)}$.  Equating $N$ and $R^d$ so that the number of neighbours in the two models are of the same order of magnitude, leads to the scaling parameter $R^\gamma$ chosen above.  Xinghua Zheng in fact has proved the above Conjecture for $d=2$ [private communication].

The well-posedness of the above martingale problem is established, for example, in Theorem~2.2 of Lalley, Perkins, and Zheng\cite{lalleyperkinszheng} who also showed that for $d=2,3$, there is a phase
transition in \eqref{eq:xtsuperbmdef}:
\begin{thm}
Let $d=2$ or $d=3$.  There exists a $\theta_c = \theta_c(d)>0$ such that for all nonzero finite measures $X_0$, 
\begin{enumerate}[a.]
\item for $\theta<\theta_c$, $X$ becomes extinct.
\item for $\theta > \theta_c$, $X$ survives (with positive probability). 
\end{enumerate}
If $d=1$, then for all $\theta$ and all finite $X_0$, $X$ becomes extinct. 
\end{thm}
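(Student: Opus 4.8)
The plan is to deduce the three assertions from four ingredients: (i) monotonicity of the survival event of $X$ in $\theta$, which produces a critical value $\theta_c\in[0,\infty]$; (ii) $\theta_c<\infty$, via a survival argument when $\theta$ is large; (iii) $\theta_c>0$ when $d=2,3$, via an extinction argument when $\theta$ is small; and (iv) $\theta_c=\infty$ when $d=1$, obtained by pushing the extinction argument to all $\theta$. For (i), the most transparent route is to note that a range-$R$ percolation cluster is increasing in the edge probability $p$, hence so is the survival probability of the associated SIR epidemic (use the percolation construction \eqref{sirperc} together with the equivalence of percolation-to-infinity and survival); passing to the scaling limit furnishes a coupling in which $\{X\text{ survives}\}$ is increasing in $\theta$, so $\theta_c$ is well defined. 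It also suffices to treat a single initial condition in (ii)--(iv): for survival the block construction below works from any nonzero $X_0$, while for extinction a general finite $X_0$ is dominated by a superposition of independent copies of the single-atom process, each carrying only its own local-time penalty (built from a Poisson decomposition of $X_0$, discarding the cross-interaction, which only adds killing); since the independent copies die out a.s., so does the sum. This is the continuum counterpart of Lemma~\ref{lem:wipe.away.history}.

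For (ii) I would run a renormalization (block) argument of the type used for supercritical interacting particle systems. Tile space-time by blocks of order one; as long as the local time $L_s$ accumulated on a block stays below a fixed level $\ell_0$, the interaction term $-X_s(L_s\phi)$ in \eqref{eq:xtsuperbmdef} is a bounded perturbation, so $X$ restricted to the block (with killing at its spatial boundary) stochastically dominates a super-Brownian motion with generator $\tfrac16\Delta$, branching rate $1$ and mass drift $(\theta-\ell_0)$. For $\theta$ large this genuinely supercritical process, started from a bounded amount of mass, with probability close to one deposits more than $\ell_0$ units of mass into each block of a surrounding shell before the local-time threshold is reached (supercritical growth combined with the unit-time Brownian spread). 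Declaring such blocks ``good'' exhibits a stochastic domination of a finite-range oriented percolation of density near one; its infinite open cluster forces $\eta_n\neq\emptyset$ for all $n$, i.e.\ $X$ survives. Hence $\theta_c\le\theta_0(d)<\infty$.

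For (iii)--(iv) one exploits the self-repellence: local time accumulated behind the cloud depresses the effective growth rate, so $X$ can persist only by advancing into fresh territory. Quantitatively, with $N_t:=X_t(1)$, $I_t:=\int_0^t N_s\,ds$ and $\calR_t:=\bigcup_{s\le t}\supp X_s$ of volume $v_t$, taking $\phi\uparrow 1$ in \eqref{eq:xtsuperbmdef} (the Laplacian error $X_s(\Delta\phi)$ being absorbed using the volume bound), and using $\tfrac{d}{dt}\!\int L_t(x)^2\,dx=2X_t(L_t)$ together with $\int L_t(x)^2\,dx\ge I_t^2/v_t$ (Cauchy--Schwarz), yields
\[
0\le N_t\le N_0+M_t+\theta I_t-\f{I_t^{2}}{2v_t},\qquad \langle M\rangle_t=I_t .
\]
On $\{X\text{ survives}\}$ one has $I_t\to\infty$ (the mass can neither tend to $0$ without being absorbed at $0$ in finite time, nor remain positive with a finite time-integral); if moreover $v_t=o(I_t)$ there, then $I_t^2/(2v_t)$ eventually swamps $\theta I_t+|M_t|$ (with $|M_t|=O(\sqrt{I_t}\,\log I_t)$ by the law of the iterated logarithm for the time-changed Brownian martingale), contradicting $N_t\ge0$; hence $\P(X\text{ survives})=0$. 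Thus (iv), and (iii) for small $\theta$, reduce to proving $v_t=o(I_t)$ --- equivalently that the cloud does not spread ballistically: a front thin enough to leave little local time behind is too light to survive its own branching fluctuations, so it fails to propagate. For $\theta$ large in $d=2,3$ a thick, mass-heavy front does propagate and $v_t$ grows comparably to $I_t$, which is why extinction fails there; in $d=1$ the front is always effectively a single point and never stabilises, giving extinction at every $\theta$; and $d\le3$ is needed from the outset simply for $L_t$ to exist.

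The principal difficulty is this last point: establishing $v_t=o(I_t)$, i.e.\ ruling out ballistic spreading, for small $\theta$ (for all $\theta$ when $d=1$). This requires confronting the combustion-like structure of the model directly --- a stability analysis of the advancing front showing that it collapses when $\theta$ is small --- and the estimate is entangled with the mass bound it is meant to feed, so the two must be bootstrapped together, e.g.\ by an induction over dyadic time scales with stopping times on both $N_t$ and $v_t$. How much mass a front of a given thickness can support in $\R^d$ is what separates $d=1$ from $d=2,3$. By contrast the survival construction in (ii) and the reductions in (i), though technical, follow well-travelled routes.
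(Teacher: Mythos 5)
First, a point of orientation: the paper does not prove this theorem at all --- it is quoted verbatim from Lalley--Perkins--Zheng \cite{lalleyperkinszheng} (well-posedness from their Theorem~2.2, the phase transition from their main results), so there is no in-paper proof to compare yours against. Judged on its own terms, your sketch identifies the right objects but has two genuine gaps. The first is step (i): you propose to obtain monotonicity of the survival event in $\theta$ by coupling the discrete SIR epidemics (where monotonicity in $p$ is indeed trivial) and ``passing to the scaling limit.'' But the convergence $X^R\Rightarrow X$ is precisely Conjecture~1.1 of this paper, which is open for $d=3$ and only privately claimed for $d=2$; and even granting weak convergence, a coupling of the prelimits does not transfer to a coupling of the limits without further work. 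The existence of $\theta_c$ therefore requires a monotonicity (or comparison) argument carried out directly for solutions of the martingale problem \eqref{eq:xtsuperbmdef}, which is one of the genuinely delicate points in \cite{lalleyperkinszheng} and cannot be obtained by the route you describe.

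The second gap is in (iii)--(iv), and you name it yourself. The inequality
\[
0\le N_t\le N_0+M_t+\theta I_t-\frac{I_t^{2}}{2v_t},\qquad \langle M\rangle_t=I_t,
\]
is correctly derived (using $\int_0^t X_s(L_s)\,ds=\tfrac12\int L_t(x)^2dx$ and Cauchy--Schwarz), but it yields extinction only if one can show $v_t=o(I_t)$ on the survival event, i.e.\ that the cloud does not spread ballistically for small $\theta$ (for all $\theta$ in $d=1$). That estimate is the entire content of the extinction half of the theorem: a ``stability analysis of the advancing front,'' bootstrapped against the mass bound it feeds, is a description of the difficulty, not a resolution of it. A proof outline that reduces the statement to its hardest unproved ingredient and stops there is not a proof. (The reduction to single-atom $X_0$ by superposing independent copies and discarding cross-interactions also needs a comparison theorem for the interacting continuum process --- the analogue of Lemma~\ref{lem:wipe.away.history} --- which again is not free in the continuum.) The survival half via a block/oriented-percolation construction is the standard and correct strategy, but as written it too is only a plan. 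If you want to see how these steps are actually carried out, the place to look is \cite{lalleyperkinszheng}, not this paper.
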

(Although percolation fails trivially for any $R$ and $p<1$ in $d=1$, there is some work to do to show this for the one-dimensional continuous model in the above.)  For $d=2,3$, a formal interchange of limits in $t$ and $R$ in the above results, and the equality $\frac{2d}{6-d}=d-1$ for $d=2,3$, leads us to the following refinement of M. Penrose's result \eqref{pcconv}:
\begin{conjecture}\label{pcR}
For $d=2$ or $3$, $V(R)p_c(R)-1\sim \frac{\theta_c}{R^{d-1}}$.
\end{conjecture}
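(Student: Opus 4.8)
The plan is to prove the equivalent statement $\lim_{R\to\infty}R^{d-1}(\lambda_c(R)-1)=\theta_c$. Write $\lambda=1+\theta/R^{d-1}$ (recall $\gamma=d-1$ for $d=2,3$) and $\lambda_c(R)=1+\theta_c(R)/R^{d-1}$. Using the monotonicity of the survival event in $p$, equivalently in $\theta$, together with the percolation--survival equivalence recalled above, it suffices to establish the two one-sided bounds:
\begin{enumerate}[(i)]
\item for every $\theta<\theta_c$ the epidemic from $\eta_0=\{0\}$, $\rho_0=\emptyset$ becomes extinct once $R$ is large, whence $\liminf_R\theta_c(R)\ge\theta_c$;
\item for every $\theta>\theta_c$ that same epidemic survives with positive probability once $R$ is large, whence $\limsup_R\theta_c(R)\le\theta_c$.
\end{enumerate}
The two inputs are the scaling-limit conjecture ($X^R\Rightarrow X$, known for $d=2$) and the phase transition for the limit $X$ at $\theta_c$ stated above. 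The subtlety forcing a two-scale argument is that a single seed gives $X_0^R=R^{-(d-1)}\delta_0\Rightarrow 0$: the super-process started from one point is identically zero, so survival of the one-seed epidemic is \emph{not} visible in the fluid limit and must be analysed microscopically before it can be handed off to $X$.

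\textbf{Microscopic phase.} First I would couple $\eta$, run from $\{0\}$, with a Galton--Watson branching random walk of offspring mean $\lambda$ up to the first time $\tau$ that $|\eta_n|$ reaches a mesoscopic level $m_R$ with $1\ll m_R\ll R^{d-1}$. In this regime the accumulated recovered set $\rho_n$ has negligible local density, so by a quantitative collision estimate (controlled via Lemma~\ref{lem:wipe.away.history}) the suppression factor in \eqref{etalaw} is of lower order and the coupling error is $o(1)$. A near-critical branching computation then shows that the probability of reaching level $m_R$, rather than dying in an early excursion, is positive, and that, conditioned on reaching it, further growth to macroscopic mass $\delta R^{d-1}$ yields a rescaled configuration that is tight in $M_F(\R^d)$ with a nonzero weak limit $X_0^\delta$. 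Crucially, the linear drift $\theta$ and the subdominant collision drift enter this phase only through the \emph{probability} of attaining macroscopic size, not through whether survival is ultimately possible; this is what will make the handoff lossless in $\theta$ and recover the sharp constant $\theta_c$ rather than the order-only bound of Theorem~\ref{thm:lambdac.lowerbound}.

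\textbf{Macroscopic phase and transfer.} Once $|\eta_n|\sim\delta R^{d-1}$ I would restart the process and invoke the scaling limit with $X_0^R\Rightarrow X_0^\delta$, so that on any finite horizon $[0,T]$ the rescaled epidemic is governed by the martingale problem \eqref{eq:xtsuperbmdef}. For (i), when $\theta<\theta_c$ the limit $X$ dies out, and I would transfer this to the prelimit by producing a nonnegative supermartingale for $\eta$ whose one-step drift converges to the strictly negative drift of $X_t(1)$ supplied by the local-time competition, yielding a uniform-in-$R$ bound on the survival probability that forces extinction. For (ii), when $\theta>\theta_c$ the limit survives and in fact spreads, and I would upgrade this to a block/renormalization scheme: from a good macroscopic configuration at time $T$ the process should reach two well-separated good configurations at time $2T$ with conditional probability close to the limiting survival probability, using weak convergence only on the window $[0,T]$ and Lemma~\ref{lem:wipe.away.history} to discard the irrelevant history when restarting. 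Comparison with supercritical oriented percolation then gives survival with positive probability, uniformly in large $R$.

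\textbf{Main obstacle.} The crux is the transfer of the $t\to\infty$ dichotomy for $X$ to the prelimit $\eta$ across the weak-convergence barrier, since survival and extinction are tail events that convergence on finite horizons does not see. The survival direction demands a \emph{quantitative} robustness statement for the limit --- that above $\theta_c$ the process not merely survives but repopulates a macroscopic region with high conditional probability, with estimates stable enough to seed the renormalization --- while the extinction direction requires ruling out any anomalous surviving tail of the prelimit beyond what the limiting competition allows. Matching the collision drift to the local-time term $-\int_0^t X_s(L_s\phi)\,ds$ \emph{exactly}, rather than merely bounding it from one side, is precisely what promotes the non-sharp lower bound already available to the sharp constant $\theta_c$, and is the hardest part of the program.
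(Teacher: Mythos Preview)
The statement you are attempting to prove is labeled \emph{Conjecture}~\ref{pcR} in the paper, and the paper does not prove it. There is no ``paper's own proof'' to compare against. The paper's main result is the strictly weaker Theorem~\ref{thm:lambdac.lowerbound}, which gives only a lower bound $V(R)p_c(R)-1\ge\theta/R^{d-1}$ for some unspecified $\theta>0$, not the sharp asymptotic with constant $\theta_c$. The paper is explicit that Conjecture~\ref{pcR} remains open; the analogue for the long-range contact process in $d=1$ is also cited as unresolved.

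What you have written is not a proof but a program, and you appear to know this: you say ``I would couple'', ``I would transfer'', and close with a section titled \textbf{Main obstacle}. Taken as a program it is reasonable in outline, but it rests on inputs the paper does not supply. First, the scaling limit $X^R\Rightarrow X$ is itself a conjecture in the paper; it is stated as proved for $d=2$ only via a private communication, and is open for $d=3$. Second, even granting the scaling limit, you correctly identify that survival and extinction are tail events invisible to finite-horizon weak convergence, and that bridging this gap requires quantitative spreading estimates for the limiting measure-valued process above $\theta_c$ together with a renormalization or block argument; none of this machinery is in the paper, and carrying it out is the substance of the conjecture, not a routine step. Third, your microscopic phase is sketched at a level where the genuine difficulty---controlling the collision/suppression term sharply enough to recover $\theta_c$ exactly rather than some $\theta>0$---is asserted rather than addressed; the paper's own lower-bound argument in Section~\ref{extinctionpf} shows how delicate even the one-sided, non-sharp version is.

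In short: there is no proof in the paper to match, and your proposal is an honest outline of the obstacles rather than a resolution of them.
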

(The analogue of the above for the ``village model" described above is raised in Section 2.2 of Lalley, Perkins, and Zheng\cite{lalleyperkinszheng}.) Simulations carried out by Deshin Finlay~\cite{finlay} suggest that 
\[V(R)p_c(R)-1\sim \begin{cases} 1.2/R  ,&\text{ if }d=2,\\
.7/R^2 , & \text{ if }d=3.
 \end{cases}
\]
The conjecture really should be made with $\frac{2d}{6-d}$ in place of $d-1$ even though
they are equal for the relevant values of $d$. For example, putting $d=4$ into
the former formula is consistent with the conjectured behaviour in the critical $d=4$ case cited above
(albeit with logarithmic corrections in $R$). 

The similarity with the critical infection rate of the long range contact process is discussed in Section 1.5 of \cite{lalleyperkinszheng}. Our $d\le 3$ setting corresponds to the $d=1$ setting of the long range contact process where the conjecture corresponding to Conjecture~\ref{pcR} remains unresolved (see the Conjecture following Theorem 2 in Durrett and Perkins~\cite{durrettperkins}).  In the contact process setting, however, upper and lower bounds establishing the correct rate of convergence (if not the exact constant) were established in Theorem 1 of Bramson, Durrett and Swindle\cite{crabgrass}.  Our main result, which adapts some of the nice ideas in the $1$-dimensional lower bound from \cite{crabgrass},  is a lower bound on $p_c(R)$, consistent with the rate in Conjecture~\ref{pcR}:
 \begin{thm}
For $d=2,3$ there is a constant $\theta = \theta(d)>0$, such that for all $R\in\N$
\[ V(R)p_c(R) - 1 \geq \frac{\theta}{R^{d-1}}.\]
\label{thm:lambdac.lowerbound}
\end{thm}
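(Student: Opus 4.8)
The plan is to show that if $\lambda = V(R)p \le 1 + \theta R^{-(d-1)}$ for a sufficiently small $\theta$, then the SIR epidemic started from $(\{0\},\emptyset)$ becomes extinct almost surely; by the percolation--epidemic equivalence recalled above, this forces $p \le p_c(R)$, hence $V(R)p_c(R) - 1 \ge \theta R^{-(d-1)}$. The natural comparison is with the Galton--Watson branching process of mean $\lambda$, for which the expected total progeny is $(1-(\lambda-1))^{-1}$ when $\lambda < 1$ but which is exactly critical when $\lambda = 1$; the whole point is that the epidemic is \emph{strictly subcritical} relative to the GW process because infections aimed at already-recovered sites $\rho_n$ are suppressed. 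So I would track $\E[|\eta_{n+1}| \mid \calF_n]$ and show it is bounded by $|\eta_n|$ times a factor strictly less than $1$ once $|\rho_n|$ is large enough, then run a two-phase argument: an initial phase where the cluster may grow but is controlled, and a later phase where the accumulated recovered set $\rho_n$ provides enough ``self-interference'' to drive the mean offspring below $1$ and force extinction.

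The key computation is the following. Given $\calF_n$, each $x \in \eta_n$ has at most $V(R)$ neighbours, of which at least $|\rho_n| - (\text{boundary overlap})$ many — but more usefully, \emph{on average over the translation-invariant geometry} — a proportion roughly $|\rho_n|/V(R)$ (times a lattice constant) are recovered and hence cannot be (re)infected. More precisely, using \eqref{etalaw}, $\E[|\eta_{n+1}| \mid \calF_n] = \sum_{y \in \xi_n}[1 - (1-p)^{|D_n(y)|}] \le p\sum_{y}|D_n(y)| = p\,|\p C_n| \le p\sum_{x \in \eta_n}|\calN(x)\cap \xi_n| = p\sum_{x\in\eta_n}(V(R) - |\calN(x)\cap(\eta_n\cup\rho_n)|)$. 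The gain over the pure branching bound $\lambda|\eta_n|$ is $p\sum_{x\in\eta_n}|\calN(x)\cap\rho_n|$. The hard part — and I expect this to be the main obstacle — is to show that this gain is, with high probability, of order $|\eta_n|\cdot|\rho_n|\cdot R^{-d}$ (the ``collision'' rate for the rescaled range-$1$ walk), i.e.\ that the infected sites $\eta_n$ are not able to systematically avoid their own recovered trail. This is exactly the mechanism that makes $\gamma = 2d/(6-d) = d-1$ the right exponent: after $n \sim R^{d-1}$ steps the walk has spread over a region in which $|\rho_n| \sim R^{d-1}$ sites sit, giving a per-step drift correction of order $R^{d-1}\cdot R^{-d} = R^{-1}$... and more carefully one wants the \emph{cumulative} effect over the lifetime of the cluster. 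I would adapt the self-intersection estimates from Bramson--Durrett--Swindle \cite{crabgrass}: bound $\E\big[\sum_{x\in\eta_n}|\calN(x)\cap\rho_n|\big]$ from below by a local-time-type quantity, using that $\rho_n = \bigcup_{k<n}\eta_k$ and a second-moment / Green's function estimate for the approximating random walk to show the infected cloud revisits the neighbourhood of its past.

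Concretely I would proceed in steps: (i) set $\lambda = 1 + \theta R^{-(d-1)}$ and record the one-step drift identity above, isolating the self-interference term $I_n := p\sum_{x\in\eta_n}|\calN(x)\cap\rho_n|$; (ii) prove the core lemma that $\E[I_n \mid \calF_{n_0}] \ge c\,R^{-d}\,\E[|\eta_n|\,|\rho_n| \mid \calF_{n_0}]$ on a suitable event, or more robustly an $L^1$ bound on $\sum_n I_n$ in terms of the total mass $\sum_n|\eta_n|$, again via random-walk Green's-function comparison as in \cite{crabgrass} (this is where Lemma~\ref{lem:unexplored-independent} is used to justify replacing the true infection steps by independent ones on the unexplored part, and where Lemma~\ref{lem:wipe.away.history}(b) controls $\E|\rho_n|$); (iii) combine (i) and (ii) into a supermartingale-type inequality for $|\eta_n|$ (after a localization/stopping to handle the bad events), of the form $\E[|\eta_{n+1}|] \le \E[|\eta_n|] - (c R^{-d}\E[|\eta_n|\,|\rho_n|] - \theta R^{-(d-1)}\E|\eta_n|)$, so that as long as $\theta$ is small the negative drift dominates once $|\rho_n| \gtrsim \theta R$; (iv) handle the short initial phase $|\rho_n| \lesssim \theta R$ separately — here the process is at worst comparable to a GW process of mean $1 + \theta R^{-(d-1)}$ killed at total size $\sim \theta R$, whose survival probability we bound crudely — and show that with positive probability the cluster is already dead or small, then restart; (v) iterate/sum to conclude $\E\big[\sum_n |\eta_n|\big] < \infty$, hence $\eta_n = \emptyset$ eventually a.s., giving extinction and therefore $p \le p_c(R)$. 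The constant $\theta(d)$ emerges from the comparison constant $c$ in step (ii) and the crude bounds in step (iv), and is not expected to match $\theta_c$.
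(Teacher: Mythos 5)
Your overall skeleton is right and matches the paper's: fix $\lambda=1+\theta R^{-(d-1)}$, record the one-step drift identity with the interference term (this is exactly Corollary~\ref{cor:etainc}), show the interference eventually beats the $\eps|\eta_n|$ growth so that $\E(|\eta_k|)<1$ for some $k\lesssim R^{d-1}$, and then conclude a.s.\ extinction by a Galton--Watson total-progeny comparison using the Markov property and Lemma~\ref{lem:wipe.away.history} to restart from $\rho=\emptyset$. Your steps (i), (iv), (v) are essentially the paper's Section~3. The gap is in your core step (ii). The lower bound $\E[I_n\mid\calF_{n_0}]\ge c\,R^{-d}\,\E[|\eta_n|\,|\rho_n|\mid\calF_{n_0}]$ asserts that each infected site has a constant fraction of the \emph{entire} recovered set among its $V(R)$ neighbours; this can only hold while the cluster has diameter $O(1)$ in macroscopic units, and it fails badly at the relevant time scale $n\sim R^{d-1}$, when $\rho_n$ is spread over $[-K\sqrt n,K\sqrt n]^d$. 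Worse, the ``uniform density'' heuristic behind it gives the wrong answer in the wrong direction: with $|\rho_n|\sim n$ points spread over a box containing $\sim n^{d/2}R^d$ lattice sites, a typical neighbourhood would contain only $\sim n^{1-d/2}$ recovered sites ($\sim R^{-1}$ for $d=3$, $\sim 1$ for $d=2$), whereas one needs of order $\eps V(R)\sim\theta R$ recovered neighbours per site to generate a drift of $-\eps$ per particle. So no averaged Green's-function computation for a spread-out cloud can close step (iii); the interference comes entirely from \emph{clustering} of the occupation measure (the local-time phenomenon in $d\le3$). There is also a structural obstruction to your route: the coupling of Proposition~\ref{brenv} gives $\eta_n\le Z_n$, so it transfers \emph{upper} bounds from the BRW to the epidemic, but a lower bound on the epidemic's self-intersections cannot be read off from the BRW without circularly assuming the interference is negligible.

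The paper avoids ever lower-bounding the self-intersections directly. After symmetrizing the telescoped drift inequality it splits $\rho_{n+1}$ into the far part $\rho_{n+1}\cap I_n^c$, the low-density part $\zeta_n=(\rho_{n+1}\setminus A_n)\cap I_n$, and the high-density part $A_n\cap I_n$ (sites with at least $6\eps V(R)$ recovered neighbours), arriving at \eqref{eq:range-decomposition}. The first two parts are controlled by \emph{upper} bounds on the dominating BRW: Azuma--Hoeffding for the far part, and for $\zeta_n$ a covering by unit cubes, each holding at most $6\eps V(R)$ low-density points, weighted by the probability $\lesssim(r+1)^{-2}$ that the BRW range reaches the cube (Lemma~\ref{lemma:brw-range-A/r^2}), sharpened by a local CLT when $d=2$. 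Then comes the dichotomy: either $\E(|\eta_k|)<0.9$ for some $k\le n=R^{d-1}$ (done), or $\E(|\rho_{n+1}|)\ge 0.9n$, in which case the two small parts force $\E(|A_n\cap I_n|)\ge 0.88n$, and the $-2|A_n\cap I_n|$ term makes the right side of \eqref{eq:range-decomposition} negative. You cite the right source (Bramson--Durrett--Swindle) and name the high/low density decomposition in passing, but your concrete plan replaces it with a product lower bound that is false and whose proof strategy is unavailable under the one-sided coupling; as written the argument does not go through.
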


For the village model described above, independent work of E. Neuman and X. Zheng (in preparation) shows an analogue of the above result and a companion upper bound for $p_c$, using different methods.

In Section~\ref{sec:lowerbound-lambdac} we gather some results on the SIR epidemic $\eta_n$ and an associated branching random walk (BRW) $Z_n$.  In Subsection~\ref{sec:setup-increments-infected}  we give a more careful derivation of the connection between
bond percolation and $\eta_n$ (Lemma~\ref{lem:unexplored-independent}), and use it to derive an expression on the conditional increments of $|\eta_n|$, Corollary~\ref{cor:etainc}.  In Subsection~\ref{sec:envelope-brw} we couple $\eta$ with a dominating BRW $Z$ and in Subsection~\ref{sec:brw-estimate} we establish an upper bound on the probability that $Z$ exits a large ball by generation $n$.  The latter is classical (and goes back
at least to Dawson-Iscoe-Perkins\cite{dawsoniscoeperkins}) aside from the fact that the long range structure of the BRW means there is an additional parameter going to infinity. The changes needed to derive this result from the arguments in Section~4 of \cite{dawsoniscoeperkins} is presented in an Appendix.  In Section~\ref{extinctionpf} we will use Corollary~\ref{cor:etainc} and the above upper bound on the dominating BRW to show that for some fixed $\theta>0$ and $R$ satisfying $V(R)p(R)-1\ge \frac{\theta}{R^{d-1}}$ there is some $k$ so that $\E(|\eta_k|)<1$ (Proposition~\ref{meaneta}).  
We will then show, essentially by a comparison to subcritical branching (thanks in part to Lemma~\ref{lem:wipe.away.history}), that this gives a.s. extinction of $\eta$. In view of our assumption on $\lambda(R)$ this implies the lower bound on $\lambda_c(R)$.

\section{Preliminary Results}
\label{sec:lowerbound-lambdac}
In this section and for the remainder of the paper, we will consider an SIR epidemic $\eta$ on $\Z^d_R$ for $d=2$ or $3$, 
and choose the probability of infection $p = p(R)$, so that for a fixed $\theta\in(0,1]$,
\begin{equation}\label{binpar} \lambda(R)= V(R) p(R) =1 + \theta / R^{d-1}\ . 
\end{equation}

\subsection{The Increments of $\eta_n$}
\label{sec:setup-increments-infected}

Throughout this subsection we assume our initial condition $(\eta_0,\rho_0)$ is a fixed (deterministic) pair of disjoint finite sets of vertices. 
For each edge $(x,y)~\in~E(\Z^d_R)$, let
\[ n(x,y) = \inf \{ \ell \geq 1 : x\in \eta_{\ell-1} \text{ and } y\in \xi_{\ell-1}, \, \text{or }
x\in \xi_{\ell-1}\text{ and } y\in \eta_{\ell-1} \}\ . \]
(By convention, $\inf \emptyset = \infty$).  
Then $n(e)$ is the unique exploration time of the edge $e$ when $B(e)$ is used to define $\eta_n$ (and $n(e)=\infty$ means that the edge is never used in the definition of $\eta$).  Clearly $n(e)=n+1$ iff $e=(x,y)\in\p C_n$, and so $n(e)$ is a predictable $\calF_n$-stopping time, i.e., the events $\{ n(e) = n+1\}$ are $\calF_n$-measurable.  

 For $e\in E(\Z^d_R)$, define
\begin{equation}
 V_n(e) = \begin{cases}
B(e),& \text{ if }e\in \p C_{n-1},\\
2, & \text{ if }e\not\in \p C_{n-1}.
\end{cases}
\end{equation}
Then $x\in \eta_n$ if and only if there exists a sequence of points $x_0, \ \ldots\ , x_n = x$ such that $x_0\in \eta_0$, $x_k\notin\rho_0$ for all $1\le k\le n$, and $V_k(x_{k-1}, x_k)=1$ for all $k=1, \dots, n$.  In this way, $V_1, \dots, V_n$ describe the epidemic in terms of the geometry of the percolation substructure.  We define
\[ \bar \calF_n = \sigma( V_1, \dots, V_n) \ . \]
It is clear from the above that $\calF_n \subset \bar \calF_n$, and it is not hard to see that this inclusion is strict (although this will not be needed).  
\begin{lemma}
\label{lem:unexplored-independent}
(a) For any finite set of edges $S=\{(x_i,y_i):i\le m\}$ in $\Z_R^d$ and $\un S\subset S$,
\begin{align}\label{condindB}
\P(&B(e)=1\ \forall e\in\un S, B(e')=0\ \forall e'\in S\setminus\un S\,|\,\bar\calF_n)\\
\nonumber&=p(R)^{|\un S|}(1-p(R))^{|S\setminus\un S|}\text{  a. s. on the $\bar\calF_n$-measurable set } \{\p C_n=S\}.
\end{align}
(b) If $x\sim y$, then 
\begin{equation}\label{condexpB}\P(B(x,y) = 1\, |\,\bar\calF_n) = p(R)\text{ on the $\bar\calF_n$-measurable set $\{x\in \eta_n, y\in \xi_n\}$}.
\end{equation}

\end{lemma}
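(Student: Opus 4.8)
The plan is to compute the conditional law of the bond variables $\{B(e)\}$ given $\bar\calF_n = \sigma(V_1,\dots,V_n)$ directly from the definitions, exploiting the fact that $\bar\calF_n$ only ``looks at'' those bonds $B(e)$ with exploration time $n(e)\le n$, while the bonds with $n(e)>n$ remain i.i.d.\ Bernoulli$(p)$ and independent of $\bar\calF_n$. The key structural observation is that on the event $\{\p C_n=S\}$, each edge $e\in S$ has $n(e)=n+1>n$ (it is explored at the \emph{next} step), so its value $B(e)$ has not yet been consulted in forming $V_1,\dots,V_n$; moreover, which edges constitute $\p C_n$ is itself determined by $V_1,\dots,V_n$ together with the $B$-values of edges explored strictly before time $n+1$, i.e.\ by $\bar\calF_n$ alone. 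Thus part (a) should follow once we make precise that (i) $\{\p C_n=S\}\in\bar\calF_n$, and (ii) conditionally on $\bar\calF_n$, the family $\{B(e):n(e)>n\}$ is i.i.d.\ Bernoulli$(p)$ — in particular the edges in $S$, which all satisfy $n(e)=n+1$.

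Concretely, first I would verify that $\{\p C_n = S\}$ is $\bar\calF_n$-measurable. Since $V_1,\dots,V_n$ determine $\eta_0,\dots,\eta_n$ (by the characterization stated just before the lemma: $x\in\eta_k$ iff there is a path $x_0\in\eta_0,\dots,x_k=x$ with $x_j\notin\rho_0$ and $V_j(x_{j-1},x_j)=1$), and $\rho_0$ is deterministic, they determine $\xi_n = (\eta_n\cup\rho_n)^c$ as well, hence determine $\p C_n = \{(x,y): x\in\eta_n,\ y\in\xi_n\}$; this gives $\{\p C_n=S\}\in\bar\calF_n$, and likewise $\{x\in\eta_n,y\in\xi_n\}\in\bar\calF_n$ for part (b). Next, for the conditional independence, I would write the joint law of $(\{B(e)\}_{e\in E},\{V_k\}_{k\le n})$ and integrate out. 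The cleanest route is an induction on $n$: let $\calE_n = \{e : n(e)\le n\}$ be the (random) set of edges explored by time $n$; one shows by induction that $\calE_n$ is $\bar\calF_n$-measurable, that $\bar\calF_n = \sigma\big(\calE_n,\ (B(e))_{e\in\calE_n}\big)$, and that conditionally on $\bar\calF_n$ the bonds $(B(e))_{e\notin\calE_n}$ are i.i.d.\ Bernoulli$(p)$ — this is exactly the ``unexplored bonds stay fresh'' principle, and it is essentially the content the lemma is named for. Given this, on $\{\p C_n=S\}$ every $e\in S$ lies outside $\calE_n$ (indeed $n(e)=n+1$), so $(B(e))_{e\in S}$ is i.i.d.\ Bernoulli$(p)$ given $\bar\calF_n$, which yields \eqref{condindB}; and (b) follows from (a) by summing over all $S$ containing $(x,y)$ with $\un S\ni (x,y)$, or more simply by noting $\{x\in\eta_n,y\in\xi_n\}$ forces $(x,y)\in\p C_n$ so $(x,y)\notin\calE_n$, giving $\P(B(x,y)=1\mid\bar\calF_n)=p(R)$ on that set.

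The main obstacle is bookkeeping rather than any deep difficulty: one must set up the induction so that the ``stopping time'' structure of the exploration is respected — i.e.\ that deciding $V_k$ only ever queries bonds in $\p C_{k-1}$, and that $\p C_{k-1}$ is measurable with respect to the information accumulated so far — and then invoke a strong-Markov-type argument for the i.i.d.\ field $\{B(e)\}$ against the increasing family of (predictable) explored-edge sets $\calE_n$. The subtlety to handle carefully is that $n(e)$ is a predictable $\calF_n$-stopping time (the events $\{n(e)=n+1\}$ are $\calF_n\subset\bar\calF_n$-measurable, as noted in the text), so at the moment we condition on $\bar\calF_n$ we know exactly which edges are about to be tested at step $n+1$ but have not yet tested them; formalizing this ``peek without touching'' is where the argument needs care. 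Once that is in place, both (a) and (b) are immediate.
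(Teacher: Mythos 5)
Your proposal is correct and follows essentially the same route as the paper: the paper formalizes your ``unexplored bonds stay fresh'' induction by fixing a candidate exploration history $(A_k)_{k\le n}$, showing the corresponding atom of $\bar\calF_n$ lies in $\sigma(B(e):e\in S^{(n)})$ where $S^{(n)}$ is the union of the explored edge sets and $S^{(n)}\cap S=\emptyset$, and then invoking independence of the Bernoulli field. Your direct derivation of (b) from the observation that $(x,y)\in\p C_n$ is unexplored at time $n$ is a slight shortcut over the paper's binomial-theorem summation over sets $S$ containing $(x,y)$, but the substance is identical.
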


\begin{proof}
(a) Let $A_k: E(\Z^d_R)\to \{0,1,2\}$, $k=1,\dots, n+1$, be fixed functions such that each set
\[ S_k := \{ e\in E(\Z^d_R): A_k(e) \neq 2\},\quad k=1,\dots, n+1,\]
is finite.  (There are countably many such functions.)
Since $\bar \calF_n = \sigma(V_k, k\leq n)$, it suffices to show that
\begin{align}
\nonumber
&\P( B(e)=1\ \forall e\in\un S, B(e')=0\ \forall e'\in S\setminus\un S, \p C_n=S, (V_k)_{k\leq n} = (A_k)_{k\leq n}) \\
&= \P(B(e)=1\ \forall e\in\un S, B(e')=0\ \forall e'\in S\setminus\un S) \cdot \P(  \p C_n=S, (V_k)_{k\leq n} = (A_k)_{k\leq n} )
\label{eq:lem2.1independence}
\end{align}
The required result then follows by the independence of the $\{B(e):e\in E(\Z^d/R)\}$ and the definition of conditional probability. 

For simplicity of notation, let
\[ C =\{ \p C_n=S, V_k = A_k, k \leq n\}.\]
Without loss of generality, we may assume that $C$ is non-empty.    On the event $\cap_{k=1}^n\{V_k=A_k\}$, we know that $\p C_{k-1} = S_k$ for each $k =1,\dots, n$, since $V_k(e')$ is $\{0,1\}$-valued only on edges $e'\in \p C_{k-1}$, and is otherwise equal to $2$. Moreover, on $C$ we have $n(e)=n+1$ for all $e\in S$, and hence the sets
\[ S_1, \ \dots,\  S_{n}, S \]
are a collection of $n+1$ mutually disjoint sets.  Denote $S^{(n)} = \cup_{k\leq n} S_k$.  
On $\cap_{k=1}^n\{V_k=A_k\}$, for all $1\le k\le n$, $x\in\eta_k$ iff there is an $(x',x)\in S_k$ such that $A_k(x',x)=1$ (since on this set $A_k(x',x)=B(x',x)$).  Since  $y\in \xi_n$ iff $y\notin\rho_0\cup(\cup_{k=0}^n\eta_k)$, this means that on $\cap_{k=1}^n\{V_k=A_k\}$, we have $y\in\xi_n$ iff $y\notin \rho_0$ and for all $k\le n$ there is no $(y',y)\in S_k$ such that $A_k(y',y)=1$.  This shows that on $\cap_{k=1}^n\{V_k=A_k\}$,
\begin{align*}\partial C_n&=\{(x,y):x\sim y, y\notin\rho_0, \exists(x',x)\in S_n\text{ s.t. }A_n(x',x)=1,\\
&\phantom{\{(x,y):}\text{ and }\forall k\le n\text{ there is no }(y',y)\in S_k\text{ s.t. }A_k(y',y)=1\}\\
&=:S'_{n+1}.
\end{align*}
Therefore
\begin{equation*}\cap_{k=1}^{n+1}\{V_k=A_k\}=\begin{cases} \cap_{k=1}^n\{V_k=A_k\}\cap\{A_{n+1}(e)=B(e)\ \forall e\in S_{n+1}\},&\text{ if }S_{n+1}=S'_{n+1},\\
\emptyset, &\text{ if }S_{n+1}\neq S'_{n+1}.
\end{cases}
\end{equation*}
The obvious induction now shows that $\cap_{k=1}^n\{V_k=A_k\}\in \sigma(B(e):e\in S^{(n)})$. 
Also since $C\neq\emptyset$ we must have $S=S'_{n+1}$ and so $C=\cap_{k=1}^n\{V_k=A_k\}\in\sigma(B(e):e\in S^{(n)})$.
 Since $S\cap S^{(n)}$ is empty, and distinct edges have independent Bernoulli variables, \eqref{eq:lem2.1independence} is now immediate, and (a) is proved. 
\medskip

\noindent(b) Let $S$ be as in (a) and containing the edge $(x,y)$. Now sum the result in (a) over all $\bar S$ as in (a) with $(x,y)\in\bar S$ and $S$ fixed.  A simple application of the binomial theorem will give the conclusion of (b)  on the set $\{\partial C_n=S\}$.  Finally take the union over these events where $(x,y)\in S$ to derive (b).
\end{proof}

\begin{cor}\label{cor:etainc} 
\begin{equation}\label{eq:infected-recursive}
\E \Big[ |\eta_{n+1}|- |\eta_n| \big|  \bar \calF_n \Big] = \left( \f {\theta} {R^{d-1}} \right) |\eta_n| - \left( 1 + \f {\theta} {R^{d-1}} \right) \sum_{x\in \eta_n} \sum_{\substack{y \sim x \\ y\in \rho_n\cup\eta_n}} \f{ 1}{V(R)}.
\end{equation}
\end{cor}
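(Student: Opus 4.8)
The plan is to write $|\eta_{n+1}|$ explicitly in terms of the bond variables, take the conditional expectation with the help of Lemma~\ref{lem:unexplored-independent}, and then reduce everything to a geometric count of the edge set $\p C_n$. First I would note, from the infection rule \eqref{sirperc}, that a vertex $y$ lies in $\eta_{n+1}$ precisely when $y\in\xi_n$ and at least one bond $(x,y)$ with $x\in D_n(y)$ (notation \eqref{eq:Dndef}) is open. Since distinct vertices of $\eta_{n+1}$ are witnessed by open bonds with distinct susceptible endpoints, this gives
\[
|\eta_{n+1}| \;=\; \sum_{y}\ind\{\exists\, x\in D_n(y):B(x,y)=1\}\;\le\;\sum_{(x,y)\in\p C_n}B(x,y),
\]
with equality on the event that no susceptible site is reached by two distinct open bonds out of $\eta_n$. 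I would then apply $\E[\,\cdot\mid\bar\calF_n]$ to both sides.

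Because $\p C_n$, each $D_n(y)$, and $\eta_n,\rho_n,\xi_n$ are $\calF_n\subset\bar\calF_n$-measurable (as observed after \eqref{eq:boundaryCndef}), Lemma~\ref{lem:unexplored-independent}(a) gives that the bonds $\{B(e):e\in\p C_n\}$ are, conditionally on $\bar\calF_n$, i.i.d.\ $\mathrm{Bernoulli}(p(R))$; summing part (b) of that lemma over the edges of $\p C_n$ then yields $\E\big[\sum_{(x,y)\in\p C_n}B(x,y)\mid\bar\calF_n\big]=p(R)\,|\p C_n|$. The only remaining (routine) ingredient is the count: recording each edge of $\p C_n$ by its unique infected endpoint and using that each vertex has exactly $V(R)$ neighbours with $\xi_n=(\rho_n\cup\eta_n)^c$,
\[
|\p C_n|=\sum_{x\in\eta_n}\big|\{y\sim x:\ y\in\xi_n\}\big|=V(R)\,|\eta_n|-\sum_{x\in\eta_n}\ \sum_{\substack{y\sim x\\ y\in\rho_n\cup\eta_n}}1.
\]
Substituting $p(R)V(R)=\lambda(R)=1+\theta/R^{d-1}$ from \eqref{binpar} and subtracting $|\eta_n|$ then gives exactly the right-hand side of \eqref{eq:infected-recursive}.

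The step I expect to be the main obstacle is the gap in the first display. Carrying Lemma~\ref{lem:unexplored-independent}(a) through without the edge-by-edge bound gives the exact identity $\E[|\eta_{n+1}|\mid\bar\calF_n]=\sum_y\big(1-(1-p(R))^{|D_n(y)|}\big)$, which falls below $p(R)|\p C_n|=\sum_y p(R)|D_n(y)|$ by the nonnegative ``double-infection'' defect contributed by susceptible sites with $|D_n(y)|\ge 2$. So I anticipate that \eqref{eq:infected-recursive} is to be read either with equality when $\eta_{n+1}$ is counted with multiplicity, or, for the set-valued $\eta$, with ``$=$'' weakened to ``$\le$'' via $1-(1-p)^k\le pk$; and that this is harmless downstream, since Proposition~\ref{meaneta} and the extinction argument of Section~\ref{extinctionpf} only use an upper bound on the conditional drift of $|\eta_n|$, which is precisely what the computation above produces. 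The rest of the argument — the algebra and the count of $|\p C_n|$ — is mechanical.
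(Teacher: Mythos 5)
Your proposal follows essentially the same route as the paper: its proof likewise expresses $\E[|\eta_{n+1}|\mid\bar\calF_n]$ as the conditional expectation of the edge sum $\sum_{x\in\eta_n}\sum_{y\sim x}\ind\{y\in\xi_n\}\ind\{B(x,y)=1\}$, applies Lemma~\ref{lem:unexplored-independent}(b) edge by edge, and finishes with the same count $|\p C_n|=V(R)|\eta_n|-\sum_{x\in\eta_n}\sum_{y\sim x,\ y\in\rho_n\cup\eta_n}1$ together with $p(R)V(R)=1+\theta/R^{d-1}$. Your closing caveat is also on target: the paper's first display silently identifies $|\eta_{n+1}|$ with that edge sum, which overcounts susceptible sites reached by two or more open incoming bonds, so strictly speaking \eqref{eq:infected-recursive} is an equality only when $\eta_{n+1}$ is counted with multiplicity and should read ``$\le$'' for the set-valued $|\eta_{n+1}|$ (the exact value being $\sum_y(1-(1-p)^{|D_n(y)|})$, as you note); since Section~\ref{extinctionpf} only ever uses the resulting upper bound on the conditional drift, nothing downstream is affected.
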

\begin{proof}
Conditioned on the history of infected sites up to time $n$, the sites that are infected at time $n+1$ are the susceptible neighbours $y\in \xi_n$ of infected sites $x\in \eta_n$ such that $B(x,y)=1$.  Thus,
\begin{align*}
\E [|\eta_{n+1}|\ \big| \bar\calF_n] &= 
\E \left[ \sum_{x\in \eta_n} \sum_{\substack{ y\sim x}} \ind \{ y\in\xi_n \} \ind \{ B(x,y)=1\} \Big |\bar\calF_n \right] \\
&= \sum_{x\in \eta_n} \sum_{\substack{ y\sim x}}(1- \ind \{ y\in\rho_n\cup\eta_n \}) \E \left[ \ind \{ B(x,y) = 1 \} | \bar\calF_n \right]\\
&= \sum_{x\in \eta_n} \sum_{y\sim x} (1 - \ind \{ y\in\rho_n\cup\eta_n \} ) \cdot p(R) & (\text{by Lemma \ref{lem:unexplored-independent}}(b))\\
&= p(R) V(R) |\eta_n| - p(R) V(R) \sum_{x\in \eta_n} \sum_{\substack{y\sim x\\ y\in \rho_n \cup \eta_n}} \f 1 {V(R)},
\end{align*}
and so the result follows upon recalling $p(R)V(R)=\left( 1 + \f {\theta} {R^{d-1}} \right)$.
\end{proof}
The second term on the right-hand side of \eqref{eq:infected-recursive}  is an \textit{interference term} arising from the conditional expectation of those sites where an infection was attempted but suppressed as the site was already infected or recovered.  

\subsection{Coupling with a branching envelope}
\label{sec:envelope-brw}

Throughout this subsection we assume our initial condition $(\eta_0,\rho_0)=(\{0\},\emptyset)$.
As in Lalley-Zheng\cite{lalleyzheng10}, we may couple a copy of $\eta$ with a dominating branching random walk in our long range setting. To see this, label potential particles in the branching random walk by $I=\cup_{n=0}^\infty \calN(0)^n$, where $\calN(0)^0=\{\emptyset\}$.  It will be convenient to totally order $\calN(0)$ as $\{e_1,e_2,\dots,e_{V(R)}\}$ and then totally order each $\calN(0)^n$ lexicographically by $<$. For $\alpha=(\alpha_1,\dots,\alpha_n)\in \calN(0)^n\subset I$, write $|\alpha|=n$, $\alpha|i=(\alpha_1,\dots,\alpha_i)$ for $1\le i\le n$ ($|\emptyset|=0$), and $\pi\alpha=(\alpha_1,\dots,\alpha_{n-1})$ be the parent of $\alpha$, where if $n=1$ the parent is the root index $\emptyset$.  Concatenation in $I$ is denoted by $(\alpha_1,\dots,\alpha_n)\vee(\beta_1,\dots,\beta_m)=(\alpha_1,\dots,\alpha_n,\beta_1,\dots,\beta_m)$. Let $\{B^\alpha:\alpha\in I\setminus\{\emptyset\}\}$ be an iid collection of Bernoulli random variables with $\P(B^\alpha=1)=1-\P(B^\alpha=0)=p(R)$, and set $
\calG_n=\sigma(\{B^\alpha:|\alpha|\le n\})$.  The intuition is that each $\alpha\in I$ with $|\alpha|=n$ labels a {\it potential} individual in generation $n$ and $C^\alpha=\{e:B^{\alpha\vee e}=1\}$ are the locations of the children of $\alpha$ relative to the position of the parent, $\alpha$.  Let $M^\alpha=|C^\alpha|$ be the number of children of $\alpha$, so that $\{M^\alpha:\alpha\in I\}$ is a collection of independent binomial $(V(R),p(R))$ random variables. Moreover $\{M^\alpha:|\alpha|=n\}$ is jointly independent of $\calG_n$.  So for $n\ge 1$, each $\alpha\in I$ with $|\alpha|=n$ labels an individual alive in generation $n$ iff $B^{\alpha|i}=1$ for all $1\le i\le n$, in which case we write $\alpha\approx n$. Note that $\emptyset\approx 0$ always and for $|\alpha|=n$,
\[\{\omega:\alpha\approx n\}\in\calG_n.\]
If for each $\alpha\in I$, 
\begin{equation*}
Y^\alpha=\begin{cases}
\sum_{i=1}^{|\alpha|}\alpha_i\in\Z^d/R&\text{ if }\alpha\approx|\alpha|\\
\Delta&\text{ otherwise},
\end{cases}
\end{equation*}
then $Y^\alpha$ is the location of the particle $\alpha$ if it is alive, and is set to the cemetery state, $\Delta$, otherwise. 
In this way $Z_n=\sum_{\alpha\approx n}\delta_{Y^\alpha}$ ($n\ge 0$) defines the empirical distribution of a branching random walk (BRW) in which each individual in generation $n$ produces a binomial $(V(R),p(R))$ number ($M^\alpha$) of children whose positions, relative to their parent and given $\sigma(M^\alpha:|\alpha|=n)\vee\calG_n$, are uniformly distributed over
\[\calN(0)^{(M^\alpha)}=\{(e'_1,\dots,e'_{M^\alpha}):e'_i\in\calN(0), e'_1,\dots,e'_{M^\alpha} \text{ distinct}\}.\]
Note that conditional on $\sigma(M^\alpha:|\alpha|=n)\vee\calG_n$, the steps of the siblings from their common parent in generation $n$ are dependent but steps corresponding to distinct generation $n$ parents are independent. We set $Z_n(x)=Z_n(\{x\})$ for $x\in\Z^d/R$. Clearly $Z$ satisfies the natural Markov property with respect to $(\calG_n)$ and $Z_0=1_{\{0\}}$.  

We next define our coupled SIR epidemic $(\eta_n,\xi_n,\rho_n)$ inductively in $n$ so that $\eta_j(x):=1(x\in\eta_j)\le Z_j(x)$ for  all $x\in \Z^d_R$ and $j\le n$, $\calF_n:=\sigma(\rho_0,\eta_1,\eta_2,\dots,\eta_n)\subset\calG_n$, and $(\eta_j,\xi_j,\rho_j)_{j\le n}$ has the law of an SIR epidemic process. Set $\eta_0=1_{\{0\}}(=Z_0)$, assume the above for $n$, and consider $n+1$. Let $Y_n=\{y\in\Z^d/R:\exists x\in\eta_n\  \text{ s.t. }(x,y)\in\p C_n\}\subset\xi_n$. If $x\in \eta_n$, then $1\le Z_n(x)$ (by induction) and so we may choose a minimal $\alpha^x_n$ (with respect to our total order) in the non-empty set $\{\alpha\approx n:Y^\alpha=x\}$. One easily checks that $\alpha^x_n$ (set it equal to $\Delta$ if $x\notin \eta_n$) is $\calG_n$-measurable. We define (recall that $D_n(y)$ is as in \eqref{eq:Dndef})
\begin{equation}\label{etadefn}
\eta_{n+1}=\{y\in Y_n:\exists x\in D_n(y)\text{ s.t. }B^{\alpha^x_n\vee(y-x)}=1\},\ \xi_{n+1}=\xi_n\setminus\eta_{n+1},\ \rho_{n+1}=\xi_n\cup\eta_n.
\end{equation}
In this way $\alpha^x_n$ labels the BRW representative at $x$ in generation $n$ for $x\in\eta_n$. The fact that $D_n(y)$ and $Y_n$ are $\calF_n$-measurable and $\alpha_n^x$ is $\calG_n$-measurable shows that $\eta_{n+1}$ is $\calG_{n+1}$-measurable and so $\calF_{n+1}\subset\calG_{n+1}$.  Assume next that $y\in\eta_{n+1}$. Therefore there is an $x\in D_n(y)$ such that $B^{\alpha^x_n\vee(y-x)}=1$.  As we have $\alpha_n^x\approx n$, the latter implies that $\alpha_n^x\vee (y-x)\approx n+1$ and 
\[Y^{\alpha_n^x\vee(y-x)}=Y^{\alpha_n^x}+y-x=y.\]
Therefore $Z_{n+1}(y)\ge 1$ and we have proved that $\eta_{n+1}\le Z_{n+1}$.  To complete the induction it suffices to establish \eqref{etalaw} in the stronger form
\begin{equation}\label{stetalaw}
\P(\eta_{n+1}=V|\calG_n)= \prod_{y\in V_2\setminus V}(1-p)^{|D_n(y)|}\prod_{y\in V}[1-(1-p)^{|D_n(y)|}]\text{ a.s. on }\{\p C_n=S\},
\end{equation}
where $S,V$, and $V_2$ are as in \eqref{etalaw}. By \eqref{etadefn} the left-hand side of the above equals
\begin{align*}\P(&\forall y\in V_2\setminus V\  \forall x\in D_n(y)\ B^{\alpha_n^x\vee(y-x)}=0,\\
&\qquad\text{and }\forall y\in V\ \exists x\in D_n(y)\ \text{ s.t. }B^{\alpha_n^x\vee(y-x)}=1|\calG_n).
\end{align*}
One easily checks that the $\calG_n$-measurable collection of superscripts on the Bernoulli variables are distinct and label particles in generation $n+1$.  The fact that $\{B^\alpha:|\alpha|=n+1\}$ are jointly independent and independent of $\calG_n$ now gives \eqref{stetalaw} from the above. 

We restate what we have shown:

\begin{prop}\label{brenv}
There is a BRW $(Z_n,n\in \Z_+)$ and an SIR epidemic $(\eta_n,n\in\Z_+)$ on a common probability space s.t. $\eta_n(x)\le Z_n(x)$ for all $x\in\Z^d/R$, $n\in \Z_+$, and $Z_0=\eta_0=1_{\{0\}}$. Moreover $Z$ is a BRW in which each parent independently gives birth to a binomial $(V(R),p(R))$ number of children ($M$), where conditional on $M$, the offspring locations relative to their parent is uniform over $\calN(0)^{(M)}$. In addition, both $Z$ and $(\eta,\rho)$ satisfy their natural Markov properties with respect to a common filtration $(\calG_n)$ (for $(\eta,\rho)$ it is \eqref{stetalaw} above).
\end{prop}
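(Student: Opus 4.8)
The proof plan is to read Proposition~\ref{brenv} as a clean summary of the coupling constructed in the preceding paragraphs, so that the task reduces to verifying, for that construction, the three assertions it makes: the pathwise domination $\eta_n(x)\le Z_n(x)$, the identification of $Z$ as the advertised branching random walk, and the SIR law \eqref{stetalaw} (which simultaneously yields that $(\eta,\rho)$ is an SIR epidemic and that it is Markov with respect to the common filtration $(\calG_n)$, with $\calF_n\subset\calG_n$).

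First I would pin down the BRW. From the i.i.d.\ Bernoulli family $\{B^\alpha:\alpha\in I\setminus\{\emptyset\}\}$ indexed by the tree $I=\cup_{n\ge0}\calN(0)^n$, declare $\alpha\approx n$ when $B^{\alpha|i}=1$ for every $i\le|\alpha|=n$, and set $Z_n=\sum_{\alpha\approx n}\delta_{Y^\alpha}$. Because, for each $\alpha$ with $|\alpha|=n$, the bits $\{B^{\alpha\vee e}:e\in\calN(0)\}$ are independent of one another and of $\calG_n$, the number of children $M^\alpha$ of a living generation-$n$ particle is binomial$(V(R),p(R))$, these counts are independent across $\alpha$, and given $M^\alpha$ the ordered list of occupied child-directions is uniform over $\calN(0)^{(M^\alpha)}$; siblings' displacements are exchangeable while displacements attached to distinct parents are independent. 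This is exactly the BRW in the statement, $Z_0=1_{\{0\}}$, and the Markov property of $Z$ with respect to $(\calG_n)$ follows from the independence of $\{B^\alpha:|\alpha|=n+1\}$ from $\calG_n$.

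Next I would carry out the induction on $n$ that builds $(\eta_n,\xi_n,\rho_n)$, maintaining the joint hypothesis that $\eta_j\le Z_j$ for $j\le n$, that $\calF_n\subset\calG_n$, and that $(\eta_j,\xi_j,\rho_j)_{j\le n}$ has the SIR law. Given the step-$n$ data, for each $x\in\eta_n$ one uses $Z_n(x)\ge1$ to pick the minimal (in the fixed total order) $\alpha^x_n\in\{\alpha\approx n:Y^\alpha=x\}$, a $\calG_n$-measurable selection, and defines $\eta_{n+1}$ by \eqref{etadefn}. Domination is the easy half: if $y\in\eta_{n+1}$ through $x\in D_n(y)$ with $B^{\alpha^x_n\vee(y-x)}=1$, then $\alpha^x_n\approx n$ forces $\alpha^x_n\vee(y-x)\approx n+1$ with $Y^{\alpha^x_n\vee(y-x)}=Y^{\alpha^x_n}+(y-x)=y$, so $Z_{n+1}(y)\ge1$. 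Since $D_n(y)$ and $Y_n$ are $\calF_n$-measurable and each $\alpha^x_n$ is $\calG_n$-measurable, $\eta_{n+1}$ is $\calG_{n+1}$-measurable, giving $\calF_{n+1}\subset\calG_{n+1}$.

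The substantive step, and the one I expect to be the main obstacle, is verifying \eqref{stetalaw}: conditionally on $\calG_n$ and on $\{\p C_n=S\}$, $\eta_{n+1}$ has the prescribed product law. Here I would check that the superscripts $\{\alpha^x_n\vee(y-x):y\in V_2,\ x\in D_n(y)\}$ entering \eqref{etadefn} are \emph{distinct} length-$(n+1)$ indices (recall $y-x\in\calN(0)$ because $x\sim y$ for $(x,y)\in\p C_n$), each labelling a potential generation-$(n+1)$ particle with a living parent: distinct because $\alpha^x_n\ne\alpha^{x'}_n$ whenever $x\ne x'$ (the two labels sit at different sites), while the final coordinate $y-x$ separates pairs sharing a first coordinate. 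Hence the corresponding variables $B^{\alpha^x_n\vee(y-x)}$ lie in $\{B^\alpha:|\alpha|=n+1\}$, are distinct, and are jointly independent of $\calG_n$; expanding the event in \eqref{etadefn} as ``for every $y\in V_2\setminus V$ all of its $|D_n(y)|$ bits vanish, and for every $y\in V$ at least one of its $|D_n(y)|$ bits equals $1$'' and using independence across the disjoint $y$-groups gives precisely $\prod_{y\in V_2\setminus V}(1-p)^{|D_n(y)|}\prod_{y\in V}[1-(1-p)^{|D_n(y)|}]$. By \eqref{etalaw} this shows $(\eta_j,\xi_j,\rho_j)_{j\le n+1}$ is an SIR epidemic, and since the right-hand side depends on the step-$n$ information only through $(\eta_n,\rho_n)$ it is also the Markov property of $(\eta,\rho)$ with respect to $(\calG_n)$. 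This closes the induction, and the three assertions together are Proposition~\ref{brenv}.
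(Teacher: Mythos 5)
Your proposal is correct and follows essentially the same route as the paper: the proposition is indeed just a restatement of the inductive coupling construction in Section~\ref{sec:envelope-brw}, and your verification of the domination step, the measurability $\calF_{n+1}\subset\calG_{n+1}$, and the distinctness of the generation-$(n+1)$ labels $\alpha^x_n\vee(y-x)$ (which gives \eqref{stetalaw} via the independence of $\{B^\alpha:|\alpha|=n+1\}$ from $\calG_n$) matches the paper's argument point for point. Your explicit justification of why the superscripts are distinct spells out a step the paper leaves as ``one easily checks.''
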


\subsection{Branching random walk bounds}
\label{sec:brw-estimate}
We will need a pair of bounds on the BRW $Z$ constructed above which are in the literature either explicitly or implicitly. The first is bound on the survival probability for a sequence of Galton-Watson processes, which is almost immediate from Lemma~2.1(a) of  Bramson et al~\cite{crabgrass}.

\begin{lemma}\label{brwsurvival} Let $\{X^{(k)}\}$ be a sequence of Galton-Watson branching processes each starting with  a single particle. Assume $X^{(k)}$ has a Binomial ($N_k,q_k)$ offspring law where for some $C>0$, and large enough $n$,\\
\noindent (i)  $q_k\le 1/2$,\quad (ii) $1\le N_kq_k\le 1+C/k.$\\
Then 
\begin{equation}\label{eq:slightlysupercritical-survival}
\limsup_{k\to\infty}kP(X^{(k)}_k>0)\le K_{\ref{brwsurvival}}(C)=\frac{4C}{1-e^{-C}}.
\end{equation}
\end{lemma}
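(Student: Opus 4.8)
The plan is to run the standard generating‑function recursion for Galton–Watson extinction probabilities and to follow the slightly‑supercritical correction carefully through two time scales. Write $g_k(s)=\E[s^{X^{(k)}_1}]=(1-q_k+q_ks)^{N_k}$ for the offspring p.g.f., so that $\beta_n:=\P(X^{(k)}_n>0)=1-g_k^{(n)}(0)$ obeys $\beta_0=1$ and $\beta_{n+1}=1-g_k(1-\beta_n)$; by (ii) the process is (weakly) supercritical, so $\beta_n$ decreases in $n$ and stays positive, and the whole problem is to bound $\beta_k$ from above. Set $m_k:=N_kq_k\in[1,1+C/k]$ and $b_k:=N_k(N_k-1)q_k^2=m_k(m_k-q_k)$.

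First I would prove a one‑step estimate. Taylor's theorem at $s=1$ gives $g_k(1-\beta)=1-m_k\beta+\tfrac12 g_k''(\eta)\beta^2$ with $\eta\in(1-\beta,1)$; since $g_k''(s)=b_k(1-q_k+q_ks)^{N_k-2}$ is nondecreasing, $g_k''(\eta)\ge g_k''(1-\beta)=b_k(1-q_k\beta)^{N_k-2}\ge b_ke^{-2m_k\beta}$, using $1-x\ge e^{-2x}$ on $[0,\tfrac12]$ together with $q_k\beta\le q_k\le\tfrac12$. This is essentially the only place hypothesis (i) is used, apart from also yielding $b_k=m_k(m_k-q_k)\ge 1\cdot\tfrac12=\tfrac12$. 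Hence
\[
 \beta_{n+1}\ \le\ m_k\beta_n-\tfrac{b_k}{2}\,e^{-2m_k\beta_n}\,\beta_n^2 .
\]

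Next I would fix a small $\delta>0$ and split the evolution into a descent phase and a relaxation phase. While $\beta_n\ge\delta$, the quadratic term in the one‑step estimate dominates the $(m_k-1)\beta_n\le (C/k)\beta_n$ term for all large $k$, so $\beta_{n+1}-\beta_n$ is bounded above by a strictly negative constant depending only on $\delta$ and $C$; consequently $\beta_n$ falls below $\delta$ within a number of steps $T_1=T_1(\delta,C)$ that is \emph{independent of $k$}, and a further application of the same estimate shows $\beta_n\le\delta$ for all $n\ge T_1$. For $n\ge T_1$ we then have $e^{-2m_k\beta_n}\ge e^{-4\delta}$, so the estimate reads $\beta_{n+1}\le(1+\tfrac Ck)\beta_n-a\beta_n^2$ with $a=a(\delta)\to\tfrac14$ as $\delta\downarrow0$. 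Putting $w_n:=1/\beta_n$ and using $\tfrac1{1-x}\ge 1+x$ turns this into $w_{n+1}\ge \tfrac1r w_n+\tfrac a{r^2}$ with $r:=1+C/k$; iterating from $n=T_1$ to $n=k$ and discarding the nonnegative $w_{T_1}$–term gives
\[
 \beta_k\ =\ \frac1{w_k}\ \le\ \frac{r(r-1)}{a\bigl(1-r^{-(k-T_1)}\bigr)} .
\]
Since $k\,r(r-1)\to C$ and, because $T_1$ does not grow with $k$, $r^{-(k-T_1)}=(1+C/k)^{-(k-T_1)}\to e^{-C}$, this yields $\limsup_k k\beta_k\le C/\bigl(a(1-e^{-C})\bigr)$; letting $\delta\downarrow0$ so that $a\uparrow\tfrac14$ gives $\limsup_k k\beta_k\le \tfrac{4C}{1-e^{-C}}$, which is the claim.

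I expect the descent phase to be the only delicate part: one must check that $\beta_n$ drops from $O(1)$ down to the scale $1/k$ in only $O(1)$ generations (so that essentially all $k$ generations remain available for the relaxation phase) and that the crude $e^{-O(1)}$ constants appearing there do not infect the final answer. Both points are handled by the fact that $T_1$ can be taken $k$‑independent and $\delta$ arbitrarily small at the end. The argument is modelled on Lemma~2.1(a) of Bramson et al.~\cite{crabgrass}; the only new wrinkle here is that $N_k$ may tend to infinity, which is immaterial since only $m_k$ and the factorial moment $b_k$ enter the bounds, and both are controlled uniformly by (i)–(ii).
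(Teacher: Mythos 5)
Your proof is correct, but it takes a genuinely different (more self-contained) route than the paper. The paper treats this as a corollary of Lemma~2.1(a) of Bramson--Durrett--Swindle: it only verifies that lemma's hypotheses --- a third-moment bound $E(Y_k^3)\le 2(1+C)^3$ giving uniform integrability of $\{Y_k^2\}$, the variance lower bound $N_kq_k(1-q_k)\ge 1/2$, and $\liminf_k \nu_k\ge 1/4$ for $\nu_k=\tfrac12 E(Y_k(Y_k-1))$ --- and then reads off the constant $4C/(1-e^{-C})$ from the citation. You instead reprove the content of that lemma directly from the generating-function recursion $\beta_{n+1}=1-g_k(1-\beta_n)$, which is feasible in closed form here because the offspring law is exactly binomial: your $b_k/2=\tfrac12 N_k(N_k-1)q_k^2$ is precisely the paper's $\nu_k$, your lower bound $b_k\ge 1/2$ is its $\liminf\nu_k\ge 1/4$, and the explicit form of $g_k''$ replaces the uniform-integrability argument entirely. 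What your version buys is self-containedness and transparency about where each hypothesis enters (hypothesis (i) only through $(1-q_k\beta)^{N_k-2}\ge e^{-2m_k\beta}$ and $b_k\ge 1/2$); what it costs is length and a few points that need explicit care in a full write-up: the ``stays below $\delta$'' claim after the descent phase (once $\beta_n\le\delta$ the recursion can increase $\beta$ when $\beta_n\lesssim C/k$, but then only by a factor $1+C/k$ per step, so it remains far below $\delta$ for large $k$), the fact that you must use $r=1+C/k>1$ rather than $m_k$ itself to avoid degeneracy when $m_k=1$, and the order of limits at the end ($\limsup_k$ first for fixed $\delta$, then $\delta\downarrow 0$, which is legitimate since the left side is $\delta$-free). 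All of these work out, and the two time-scale decomposition you use is exactly the mechanism inside the cited BDS lemma.
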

\begin{proof} If $Y_k$ has a binomial distribution with parameters $(N_k,q_k)$,
then an easy calculation shows that for large enough $n$
\[E(Y_k^3)\le 2\left(1+\frac{C}{k}\right)^3\le 2(1+C)^3.\]
This implies the uniform integrability of $\{Y_k^2\}$ which is needed to apply Lemma~2.1 of \cite{crabgrass}. Note also that the variance of $Y_k$ is $N_kq_k(1-q_k)\ge 1/2$ for large enough $k$, and so we have the lower bound on the variance also required in Lemma~2.1 of \cite{crabgrass}.  Finally the parameter $\nu_k:=(1/2)E(Y_k(Y_k-1))$ satisfies $\liminf_k\nu_k\ge 1/4$.  It is now straightforward to apply Lemma~2.1(a) of \cite{crabgrass} to get the above upper bound.
\end{proof}

The other result we will need concerns the range of the branching random walk $Z$ constructed in
Proposition~\ref{brenv}. Recall that $Z_0(x)=1(x=0)$ and the binomial offspring distribution has 
parameters $V(R)$ and $p(R)$ satisfying \eqref{binpar}.
Let $\calR_n$ denote the range of  $Z$ up until $n$, that is, 
$$\calR_n=\{x\in\Z^d_R:\sum_{j=1}^nZ_j(x)>0\}$$.

\begin{lemma}
\label{lemma:brw-range-A/r^2}
For any $c>0$ and $K\in\N$, there is an $A(c,K)$, non-decreasing in $c$ and $K$, such that for all $R\in\N$, $n\le cR^{d-1}$, and  $r\le K\sqrt n$, 
\[ \P(\calR_n\cap([-r,r]^d)^c\neq\emptyset)\le A (r+1)^{-2}.\]
\end{lemma}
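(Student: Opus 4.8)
The plan is to bound the probability that the range $\calR_n$ exits the box $[-r,r]^d$ by a first-moment (union-bound) argument over the individuals of the branching random walk, controlling the expected number of particles that ever travel a distance exceeding $r$ from the origin within $n$ generations. Write $Z_{\le n} = \sum_{j=1}^n Z_j$ for the total occupation measure up to time $n$; then
\[
\P(\calR_n\cap([-r,r]^d)^c\neq\emptyset)\le \E\big[Z_{\le n}(([-r,r]^d)^c)\big]=\sum_{j=1}^n \E\big[Z_j(([-r,r]^d)^c)\big].
\]
Since $Z$ is a branching random walk with binomial$(V(R),p(R))$ offspring law and mean offspring $\lambda = V(R)p(R) = 1+\theta/R^{d-1}\in[1,2]$, the many-to-one lemma gives $\E[Z_j(A)] = \lambda^j\,\P(S_j/R\in A)$, where $S_j = W_1+\dots+W_j$ is a random walk whose steps $W_i$ have the law of a single BRW displacement: $W_i$ is uniform on $\calN(0)=\{y\in\Z^d:0<\|y\|_\infty\le R\}$ (the mild dependence between siblings' steps is irrelevant here because we only track one spine). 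Thus the whole quantity is at most $\lambda^n \sum_{j=1}^n \P(\|S_j\|_\infty > rR)$.

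Now I would estimate $\P(\|S_j\|_\infty > rR)$ by a standard exponential Markov (Chernoff) bound. Each coordinate of $W_i$ is a bounded mean-zero variable on $\{-R,\dots,R\}$ with variance of order $R^2$, so $\Var(S_j$ coordinate$)\asymp jR^2$ and one has $\P(\|S_j\|_\infty > rR)\le 2d\exp(-c' r^2 R^2/(jR^2)) = 2d\exp(-c' r^2/j)$ provided $rR$ is not so large as to leave the Gaussian regime — and since $r\le K\sqrt n$ and $j\le n$, we have $r^2/j \le r^2 \le K^2 n$, keeping the deviation $rR \lesssim \sqrt{n}R = \sqrt{jR^2}\cdot(\text{bounded})$ in the moderate-deviation range where the sub-Gaussian bound is valid (one uses here that the steps are bounded by $R$ and $r\le K\sqrt n\le K\sqrt{n}$, so the deviation in units of the standard deviation is $O(\sqrt n/\sqrt j)\cdot$const, handled by Bernstein/Hoeffding for bounded increments). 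Using $n\le cR^{d-1}$ and $\theta\le 1$ gives $\lambda^n \le (1+1/R^{d-1})^{cR^{d-1}}\le e^{c}$, a constant depending only on $c$. Summing, $\P(\calR_n\cap([-r,r]^d)^c\neq\emptyset)\le 2d\,e^{c}\sum_{j=1}^n e^{-c' r^2/j}$. For $j\le r^2$ the summand is at most $e^{-c'}$ at $j=r^2$ and decays; more carefully $\sum_{j=1}^{n} e^{-c'r^2/j}\le \sum_{j\ge1} e^{-c'r^2/j}$, and this tail sum is $O(r^2 e^{-c''})$? — no: the correct bound is that $\sum_{j=1}^\infty e^{-c'r^2/j}$ diverges, so one must keep the cutoff. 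Since $j\le n$ and (crucially) $r\le K\sqrt n$ forces $n\ge r^2/K^2$, the sum has at most $n$ terms; splitting at $j=r^2$, the terms with $j\le r^2$ contribute $\le r^2 e^{-c'}$ is still not $O(r^{-2})$. This shows the naive first-moment bound is too weak and must be refined: the exit probability should be compared not to the expected occupation of the complement but to the probability that the \emph{running maximum} of the spine exceeds $rR$, i.e., one brings in a maximal inequality (Doob/Etemadi) so that only $\P(\max_{j\le n}\|S_j\|_\infty > rR)\le C\P(\|S_n\|_\infty > rR/\sqrt{\text{const}})$ enters, giving a single exponential term $\le C\exp(-c' r^2/n)$; but even this is $O(1)$ when $r\asymp\sqrt n$, not $O(r^{-2})$.

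The resolution — and the real content of the lemma, as the paper signals by citing Dawson--Iscoe--Perkins \cite{dawsoniscoeperkins} and deferring to an Appendix — is that for $r\asymp\sqrt n$ one cannot get $O(r^{-2})$ from a single generation's tail; instead one needs the full \emph{genealogical} second-moment / maximal argument for branching random walk that produces superexponential decay in the \emph{number of generations} of travel. Concretely, the approach of \cite[Section~4]{dawsoniscoeperkins} is: decompose the event of exiting $[-r,r]^d$ according to the first generation $j$ at which some particle is at distance $>r$; use the branching property to write the probability via the expected number of generation-$j$ particles outside, but then control that expectation using a \emph{recursive} estimate on $h_k(r):=\P(\text{some descendant of a generation-}(n-k)\text{ particle exits before time }n)$, obtaining a bound of the form $h_k(r)\le A(r+1)^{-2}$ by an inductive argument exploiting that a displacement of order $r$ in $k$ steps costs a factor beating the $\lambda^k\le e^c$ growth. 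The one genuinely new feature over \cite{dawsoniscoeperkins} is that here the step distribution itself depends on $R\to\infty$ (uniform on a box of side $2R$ rather than fixed), so every estimate must be uniform in $R$; this is exactly what the Appendix referenced in the introduction is there to verify. \textbf{The main obstacle}, therefore, is not the branching bookkeeping (which is classical) but obtaining the deviation estimates for the $R$-dependent spine walk $S_j$ with constants uniform in $R$ under the coupled constraints $n\le cR^{d-1}$ and $r\le K\sqrt n$ — I would isolate this as a lemma (Hoeffding for the bounded $\pm R$-valued increments, rescaled by $R$) and then feed it into the Dawson--Iscoe--Perkins recursion essentially verbatim, which is what I expect the Appendix does.
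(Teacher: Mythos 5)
You correctly diagnose that the naive first--moment bound fails: since $r\le K\sqrt n$, even the single tail $\P(\Vert S_n\Vert_\infty>r R)$ is only $O(1)$, so no union bound or maximal inequality over the spine can yield $(r+1)^{-2}$. But your proposed repair does not work and is not what the Appendix does. A spatial displacement of order $r=K\sqrt n$ over $n$ generations costs only a constant probability factor, so there is no ``inductive argument exploiting that a displacement of order $r$ in $k$ steps costs a factor beating $\lambda^k$''; no recursion built on spine deviation estimates can manufacture the $(r+1)^{-2}$ rate, and Section~4 of Dawson--Iscoe--Perkins contains no such recursion. The $(r+1)^{-2}$ has a different origin entirely: it is a \emph{branching} (Kolmogorov-type) rate, not a spatial one. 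The paper's proof splits the event in two. First, the extinction time $T_0$ of the (slightly supercritical) Galton--Watson total mass satisfies $\P(T_0>r^2)\le B r^{-2}$ by the survival estimate \eqref{eq:slightlysupercritical-survival} (Lemma~\ref{brwsurvival}, from Bramson--Durrett--Swindle); this reduces the lemma to exit of $[-r,r]^d$ within $r^2$ generations. Second, for that event one argues by contradiction and compactness: if along sequences $r_n,R_n\to\infty$ one had $r_n^2\,\P(\calR_{r_n^2}\cap([-r_n,r_n]^d)^c\neq\emptyset)\to\infty$, then taking $\mu_n=r_n^2$ independent copies and applying the Brownian rescaling, the event that the rescaled system stays inside $(-1,1)^d$ up to time $1$ has probability $(1-\P(\calR_{r_n^2}\cap([-r_n,r_n]^d)^c\neq\emptyset))^{r_n^2}\to0$; but the rescaled processes converge to super-Brownian motion started at $\delta_0$, whose range stays in $(-1,1)^d$ with probability $e^{-u(0)}>0$ by Iscoe's result. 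This is where the exponent $2$ comes from: $r^2$ is simultaneously the critical time horizon and the number of initial particles needed to see a nondegenerate scaling limit.

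The genuinely technical content of the Appendix is therefore not a deviation estimate for the spine but the upper semicontinuity of range-exit probabilities under this weak convergence (Lemma~\ref{BRWlimitranges}): weak convergence of the measure-valued processes alone does not control the supports, so one needs a modulus of continuity uniform over all particle paths in the tree (Lemma~\ref{modcont}) together with a local mass lower bound for surviving ancestral lines (the estimate \eqref{highdensity}), both uniform in $R$ under $n\le cR^{d-1}$. Your Hoeffding bound for the $R$-dependent, bounded increments is indeed an ingredient there --- it replaces the Gaussian bounds in the branching Brownian motion setting --- but it feeds into the modulus of continuity, not into the $(r+1)^{-2}$ rate. As written, your proposal is missing both ideas that actually produce the bound: the extinction-time reduction to $n=r^2$, and the $\mu=r^2$-particle compactness argument against the super-Brownian limit.
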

Finer results are available for nearest neighbour branching random walks (see, e.g.,  Theorem~7 of Le Gall and Lin~\cite{legall}) but  as our branching random walk has some nonstandard features (such as a ``long range" random walk component) we outline the proof in the Appendix.  It is a straightforward modification of the results in Section 4 (particularly Lemma~4.9) of  Dawson, Iscoe, Perkins~\cite{dawsoniscoeperkins} from their branching Brownian motion setting to the long-range branching random walk setting here.

\section{Proof of Extinction}\label{extinctionpf}

Assume that $\eta_0=\{0\}$ and $\rho_0=\emptyset$ throughout this section. 
Recall that $p(R)$ is chosen so that \eqref{binpar} holds.  Our aim in this section is to establish Theorem~\ref{thm:lambdac.lowerbound} by showing $\eta$ becomes extinct a.s. for some positive value of $\theta$.  The first and main step is the following:
\begin{prop}\label{meaneta} There is a $\theta_0>0$ so that for all $R\in\N$, and all $0<\theta\le \theta_0$, there is a $k\in\{1,2,\dots,R^{d-1}+1\}$ so that $\E(|\eta_k|)\le 1-\theta$.
\end{prop}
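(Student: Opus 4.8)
The starting point is to take expectations in Corollary~\ref{cor:etainc}. Write $a_n:=\E(|\eta_n|)$ and $b_n:=\E(I_n)$ where $I_n:=\f1{V(R)}\sum_{x\in\eta_n}|\{y\sim x:y\in\rho_n\cup\eta_n\}|\ge0$; then \eqref{eq:infected-recursive} gives $a_{n+1}=\bigl(1+\theta R^{1-d}\bigr)a_n-\bigl(1+\theta R^{1-d}\bigr)b_n$ with $a_0=1$. Dropping $b_n$ gives the crude bound $a_n\le(1+\theta R^{1-d})^n\le e^{\theta}\le 1+2\theta$ for $n\le R^{d-1}$ and $\theta\le1$; comparing with the pure branching recursion $\E(|Z_n|)=(1+\theta R^{1-d})^n$ for the envelope of Proposition~\ref{brenv} yields the identity
\[
\E(|Z_n|)-a_n=\sum_{m=0}^{n-1}\bigl(1+\theta R^{1-d}\bigr)^{n-m}b_m\ \ge\ \sum_{m=0}^{n-1}b_m .
\]
Consequently, if I can exhibit some $k\le R^{d-1}$ with $\sum_{m=0}^{k-1}b_m\ge 3\theta$, then $a_k\le\E(|Z_k|)-\sum_{m<k}b_m\le(1+2\theta)-3\theta=1-\theta$, which is the Proposition. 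So everything reduces to a lower bound on the accumulated interference: it suffices to show $\sum_{m=0}^{R^{d-1}-1}b_m\ge c_0$ for some \emph{absolute} constant $c_0>0$, and then take $\theta_0:=c_0/3$ and $k=R^{d-1}$.

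The mechanism for the interference lower bound is that, on the event that the epidemic survives to generation $n$, it resembles a nearly critical branching process and is spatially confined, hence locally dense. A second-moment computation for the envelope gives $\E(|Z_n|^2)\le Cn$ for $n\le R^{d-1}$, so (since $|\eta_n|\le|Z_n|$ and $a_n\ge1-\theta$ on the relevant window) Paley--Zygmund gives $\P(\eta_n\neq\emptyset)\gtrsim 1/n$, and in fact $\P(|\eta_n|\ge\delta n)\gtrsim1/n$ for a small fixed $\delta$ by Yaglom-type behaviour inherited from $Z$. On $\{|\eta_n|\ge\delta n\}$ the number of distinct sites ever infected is $|\rho_n\cup\eta_n|=\sum_{k\le n}|\eta_k|\gtrsim n^2$, and by Lemma~\ref{lemma:brw-range-A/r^2} applied to $Z\supseteq\eta$ (using $n\le R^{d-1}$) all of these sites lie, with probability $1-O(1/n)$, in a cube of side $O(\sqrt n)$ in $\Z^d_R$, i.e.\ among $O(n^{d/2}R^d)$ lattice points. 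Hence an infected site $x\in\eta_n$ in the bulk has $\gtrsim \tfrac{n^2}{n^{d/2}R^d}\cdot V(R)\asymp n^{2-d/2}$ neighbours in $\rho_n\cup\eta_n$, so $I_n\gtrsim\f{1}{V(R)}\cdot\delta n\cdot n^{2-d/2}$ on an event of probability $\gtrsim1/n$, giving $b_n=\E(I_n)\gtrsim \f{n^{2-d/2}}{R^d}$.

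Summing this over $1\le n\le R^{d-1}$ and using $2-\tfrac d2>-1$ for $d\le3$,
\[
\sum_{n=1}^{R^{d-1}}b_n\ \gtrsim\ \f{1}{R^d}\sum_{n=1}^{R^{d-1}}n^{2-d/2}\ \asymp\ \f{(R^{d-1})^{3-d/2}}{R^d}\ =\ R^{(d-1)(3-d/2)-d} ,
\]
and $(d-1)(3-\tfrac d2)-d=\tfrac{5d}2-\tfrac{d^2}2-3$ vanishes for both $d=2$ and $d=3$ — this is precisely why $\gamma=d-1$ is the correct exponent. Thus $\sum_{n<R^{d-1}}b_n\ge c_0>0$ with $c_0$ absolute, which by the first paragraph completes the proof.

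The delicate point — and where I expect the real work — is the claim that, conditioned on surviving to generation $n\le R^{d-1}$, the \emph{epidemic} (not merely its envelope $Z$) has $\gtrsim\delta n$ infected sites and has visited $\gtrsim n^2$ sites: the coupling $\eta\le Z$ supplies only the matching \emph{upper} bounds, while the lower bound presupposes that the interference has not yet appreciably thinned $\eta$ relative to $Z$ by generation $n$ — which is exactly what the argument is trying to quantify. I expect this to be handled by a bootstrap on $B_n:=\sum_{m<n}b_m$: as long as $B_n$ stays below a fixed small $\eps_0$, the expected number of blocked infections up to time $n$ is $\lesssim(1+\theta R^{1-d})B_n<2\eps_0$, so with probability $1-O(\eps_0)$ the epidemic still coincides with $Z$, the survival/confinement estimates for $\eta$ follow from those for $Z$, and the second paragraph forces $B_n\gtrsim n^{3-d/2}/R^d$; but $n^{3-d/2}/R^d$ reaches $\eps_0$ at some $n\le R^{d-1}$ (again by the exponent identity $(d-1)(3-d/2)=d$), so $B_n$ must cross $\eps_0$ within the window, contradicting $B_n<\eps_0$ and yielding the required $k$. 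Carrying out this dichotomy carefully — in particular keeping all the constants ($C$, $\delta$, the implied constants, $\eps_0$, $c_0$) independent of both $R$ and $\theta$, and confirming that surviving lineages of $\eta$ really are, up to the accounted-for interference, random-walk-like so that the local-density estimate applies — is the heart of the matter.
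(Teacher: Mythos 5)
Your opening reduction is sound and parallels the paper's first step: summing Corollary~\ref{cor:etainc} telescopically reduces the Proposition to showing that the accumulated expected interference $B_k=\sum_{m<k}b_m$ reaches a constant multiple of $\theta$ for some $k\le R^{d-1}$, and your exponent identity $(d-1)(3-\tfrac d2)=d$ is the right heuristic for why $\theta/R^{d-1}$ is the critical window. The gap is exactly where you suspect it, and your proposed bootstrap does not close it. The lower bound $b_n\gtrsim n^{2-d/2}/R^d$ needs, \emph{conditionally on survival to generation $n$}, that $\eta$ has $\gtrsim n$ infected sites, has visited $\gtrsim n^2$ sites, and is confined to a box of side $O(\sqrt n)$; the coupling $\eta\le Z$ supplies only the matching upper bounds. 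Your fix --- that while $B_n<\eps_0$ the epidemic coincides with $Z$ with probability $1-O(\eps_0)$ --- is an \emph{unconditional} estimate, whereas the survival event on which coincidence is needed has probability only $\asymp 1/n$. So the best you get is $\P(|\eta_n|\ge\delta n)\ge c/n-O(\eps_0)$, which is vacuous unless $B_n\ll 1/n$; and under that much stronger bootstrap hypothesis the window closes at $n\asymp R^{2d/(8-d)}\ll R^{d-1}$, by which time $B_n\to 0$ rather than reaching an absolute constant $c_0$. Indeed, conditioned on survival to $n\asymp R^{d-1}$ the epidemic emphatically does \emph{not} coincide with $Z$ --- that is the whole mechanism being exploited --- so no version of ``$\eta$ still equals $Z$'' can carry the argument through the full window.

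The paper avoids every conditional lower bound by arguing from the negation of the conclusion, using first moments and upper bounds only. Assume $\E(|\eta_k|)\ge 0.9$ for all $k\le n=R^{d-1}$, so that $\E(|\rho_{n+1}|)\ge 0.9n$. Split $\rho_{n+1}$ into three pieces: the part outside $I_n=[-K\sqrt n,K\sqrt n]^d$, bounded by $2de^4e^{-K^2/2}n$ via Azuma--Hoeffding applied to the dominating BRW; the low-density part inside $I_n$ (sites with fewer than $6\eps V(R)$ neighbours in $\rho_{n+1}$), bounded by $C(K)\theta n$ (up to $\log(1/\theta)$ factors when $d=2$, where a local CLT refinement is needed) by covering $I_n$ with unit cubes, each holding at most $6\eps V(R)$ low-density points, and invoking the range bound of Lemma~\ref{lemma:brw-range-A/r^2}; and the high-density set $A_n$. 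Pigeonhole then forces $\E(|A_n\cap I_n|)\ge 0.88n$, and since each point of $A_n$ contributes at least $6\eps$ to the interference, feeding this back into the telescoped inequality gives $\E(|\eta_n|)-1\le-\theta$. In short: derive the density of $\rho_{n+1}$ from the assumption that $\E(|\eta_k|)$ has \emph{not yet} dropped, rather than from survival asymptotics of $\eta$ that the coupling cannot provide.
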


\begin{proof} To shorten notation we set
\begin{equation}\label{epsdefn}
\eps=\frac{\theta}{R^{d-1}},
\end{equation}
where $\theta\in(0,1]$. We see that \eqref{eq:infected-recursive} implies
\[ \E \Big[ |\eta_{k+1}| - |\eta_k| \Big] \leq \E\left[\eps |\eta_k| - \sum_{x\in \eta_k}\sum_{\substack{ y\sim x \\ y\in \eta_k \cup \rho_k}}\f 1 {V(R)}\right]  \ . \]
 Since $|\rho_{n+1}| = \sum_{k=0}^{n} |\eta_k|$ and $|\eta_0|=1$, we can sum the above equation from $k=0$ to $n$ to get a telescoping sum
\begin{equation}\label{meanbnd1}\E \Big[ |\eta_{n+1}| - 1  \Big] \leq \E\Big[\eps |\rho_{n+1}| - \sum_{k=0}^n \sum_{x\in \eta_k} \sum_{\substack{y\sim x \\ y\in\eta_k \cup \rho_k}} \f 1 {V(R)}\Big] \ . 
\end{equation}

 Let us define the \textit{infection time} $\tau(x)$ of a site $x\in \Z^d$ by
\[
\label{eq:tau-infection-time}
\tau(x) = \begin{cases} 
n, & \text{ if }x\in \eta_n, \\
\infty, & \text{ if $x$ is never infected}. \end{cases}
\]
The second sum in \eqref{meanbnd1} contains only $x,y\in\rho_{n+1}$ so that $\tau(y)\le \tau(x)$.  The $x$'s can appear at most once in this summation but the $y$'s may appear multiple times corresponding to distinct values of $k$. Nonetheless we have the inequality
\begin{align}\label{interactionbnd1}
\sum_{k=0}^n \sum_{x\in \eta_k} \sum_{\substack{y \sim x \\y\in \rho_k \cup \eta_k}} \f 1 {V(R)} 
\ge \sum_{x\in \rho_{n+1}} \sum_{\substack{ y \in \rho_{n+1}\\ y\sim x \\ \tau(y)\leq \tau(x)}} \f 1 {V(R)} \ . 
\end{align}
By symmetry, we know that the sum on the right restricted to $\tau(y)<\tau(x)$ equals the same sum but now over 
$\tau(y)>\tau(x)$, and therefore,
 \[ \sum_{x\in \rho_{n+1}} \sum_{\substack{ y \in \rho_{n+1}\\ y\sim x \\ \tau(y)\leq \tau(x)}} \f 1 {V(R)} \geq \f 1 2 \sum_{x\in \rho_{n+1} } \sum_{\substack{y \sim x \\ y \in \rho_{n+1}}} \f 1 {V(R)} \ . \]
This, together with \eqref{meanbnd1} and \eqref{interactionbnd1}, implies
\begin{equation}
\label{eq:range-interference}
\E \Big[ |\eta_{n+1}| - 1 \Big] \leq \E\Big[\eps |\rho_{n+1}| - \f 1 2 \sum_{x\in \rho_{n+1}} \sum_{\substack{ y\sim x \\ y\in \rho_{n+1}}} \f 1 {V(R)}\Big] \ . 
\end{equation}
We can now proceed in a manner similar to that of Bramson \textit{et al.}~\cite{crabgrass}, and decompose the interference terms into regions of high density and low density.  Fix $K\in\N$, and consider the following sets:
\begin{align}
&I_n = [-K\sqrt n, K \sqrt n]^d \ , 
\vspace{0.15cm}\\
&A_n = \{ x\in \rho_{n+1} : |\{y\sim x:y\in\rho_{n+1}\}| \geq 6 \eps V(R)  \} \ .
\label{eq:InAndefn}
\end{align}
Then, as
\begin{align*}
 \sum_{x\in \rho_{n+1}} \sum_{\substack{y \sim x \\ y \in \rho_{n+1}}} \f 1 {V(R)} &\geq
  \sum_{x\in A_n} \sum_{\substack{y \sim x \\ y \in \rho_{n+1}}} \f 1 {V(R)} \\
 &\geq |A_n| 6 \eps \ , 
\end{align*}
we have from \eqref{eq:range-interference} and an elementary argument,
\begin{equation}
\label{eq:range-decomposition}
\E \Big[ |\eta_{n+1}| - 1  \Big] \leq \eps \E\Big[ |\rho_{n+1} \cap I_n^c | + |(\rho_{n+1}\setminus A_n)\cap I_n|- 2 |A_n \cap I_n| \Big].
\end{equation}
For the remainder of this section, we will work on each of the terms in \eqref{eq:range-decomposition} to show that the right-hand side of \eqref{eq:range-decomposition} is negative some $n$ for small enough $\theta$.  Henceforth we assume that 
\begin{equation}\label{ncond1}
n\le 4R^{d-1}.
\end{equation}


To prove that the term $|\rho_{n+1}\cap I_n^c|$ in \eqref{eq:range-decomposition} is small for large $K$, we will compare the range of the epidemic with the range of the  branching random walk $Z_n$ in Proposition~\ref{brenv}.  Recall that $Z_n$ is constructed on the same probability space as the epidemic such that $|\eta_k\cap A| \leq Z_k(A)$ for any set $A \subset \Z^d/R$.  A routine calculation for the branching random walk shows that if $X_k=X^R_k$ is a simple random walk, starting at the origin, taking steps uniformly in $\{x: x\in \Z_R^d, 0<\pnorm x \infty \leq 1\}$, we have
\begin{equation}\label{meanmeas} \E (Z_k(A)) = \left( 1 + \f {\theta} {R^{d-1}} \right)^k \P(X_k \in A) \ . \end{equation}
(Recall that the offspring steps at distinct times are independent in the BRW.)
Therefore, we have
\begin{align}
\nonumber \E (|\rho_{n+1} \cap I_n^c|) &= \sum_{k=0}^n \E (|\eta_k \cap I_n^c|) \\
\nonumber&\leq \sum_{k=0}^n \E (Z_k(I_n^c)|) \\
\nonumber&= \sum_{k=0}^n \left( 1 + \f {\theta} {R^{d-1}} \right)^k \P(X_k \in I_n^c) \\
\label{outInbnd}&\le e^4\sum_{k=1}^n\P(X_k\in I_n^c)\ ,
\end{align}
the last by our choice of $n$ in \eqref{ncond1}. 
We will get the desired bound, independent of $R$, by an application of the Azuma--Hoeffding inequality (see, for example Theorem~2 of Hoeffding~\cite{hoeff} and the comments at the end of Section 2 of that reference).
\begin{lemma}
\label{lem:azuma-hoeffding}
(Azuma-Hoeffding).  Let $M_n$ be a martingale with increments satisfying $|M_k - M_{k-1}|\le c_k$ a.s. for $k\in \N$.  Then we have for any $N\in \N$ and $D>0$,
\[ \P ( |M_N - M_0| \geq D ) \leq 2 \exp \left( \f{ - D^2}{2 \sum_{k=1}^N c_k^2} \right) \ . \] 
\end{lemma}
Applying the above to the martingale $(X_n^i)_n$, the $i$-th component of the random walk $(X_n)$, we get that for any $1\le k\le n$,
\begin{align*}
\P(|X_k^i| \geq K \sqrt n ) &\leq 2 \exp \left( \f{ - K^2  n}{ 2 k} \right) \leq 2 \exp(-K^2/2) \ .
\end{align*}
Therefore, we have for $1\le k\le n$,
\begin{equation}
\label{eq:outsidebox-prob}
 \P(\Vert X_k\Vert_\infty \geq K  \sqrt n) \leq 2 d \exp(-K^2/2),
 \end{equation}
which by \eqref{outInbnd} yields
\begin{equation}
\label{eq:outsidebox-expected}
\E( |\rho_{n+1} \cap I_n^c| )\le e^4 2dn \exp(-K^2/2) \ . 
\end{equation}

Now consider the second term in \eqref{eq:range-decomposition} and define $\zeta_n=(\rho_{n+1}\setminus A_n)\cap I_n$.  Let $C(r)$ denote a closed cube of $\ell^\infty$ diameter $1$ that is $\ell^\infty$ distance $r\ge 0$ from the origin in $\Z^d/R$.  There are at most $6 \eps V(R)$ elements inside $C(r) \cap \zeta_n$, since otherwise an element in $C(r) \cap \zeta_n$ would have more than $6\eps V(R)$ neighbours in $\rho_{n+1}$, contradicting the definition of $\zeta_n$.  Comparing the epidemic with the coupled branching random walk $Z_n$ with range $\calR_n$ up to time $n$, we can use Lemma \ref{lemma:brw-range-A/r^2} to conclude (recall \eqref{ncond1} and $\eps=\theta/R^{d-1}$) that for $r\le 2K\sqrt{n}$,
\begin{align}\label{mean cube}
\nonumber\E (|C(r)\cap \zeta_{n}|)\le \E(6\eps V(R)1(C(r)\cap\calR_n\neq\emptyset))&\le 6\eps V(R)A(4,2K)(r+1)^{-2}\\
&\le c_1(K)\theta R(r+1)^{-2}\ .
\end{align}
The number of such boxes $C(r)$ with ``integer corners" at $\ell^\infty$ distance $r\in[m,m+1]$ ($m\in\Z_+$) from the origin is bounded by $C'(m+1)^{d-1}$. We can cover $\zeta_n\subset I_n$ by the collections of these boxes for $m=0,\dots, \lfloor K\sqrt n\rfloor$ and conclude that for a constant $c_2=c_2(K)$,
\begin{align}
\nonumber \E (|\zeta_n| )&\leq \sum_{m=0}^{\lfloor K\sqrt n\rfloor} C' (m+1)^{d-1} c_1(K)\theta R(m+1)^{-2}\\
&\leq c_2(K)\theta R \sum_{m=0}^{\lfloor K\sqrt n\rfloor}  (m+1)^{d-3}.
\label{eq:zetan.d=2.d=3}
\end{align}
We strengthen \eqref{ncond1} and now assume
\begin{equation}\label{nbnd2}
R^{d-1}\le n\le 4R^{d-1}.
\end{equation}

Assume first that $d=3$.   Continuing from \eqref{eq:zetan.d=2.d=3}, we use $n^{-1/2}\le R^{-1}$ (from \eqref{nbnd2}) to see that for $n$ as above,
\begin{equation}\label{eq:zetan.d=3} \E( |\zeta_n| )\leq c_2(K)(K\sqrt n+1)\theta R\le 2c_2(K)Knn^{-1/2} \theta R \le 2c_2(K)K\theta n \ . 
\end{equation}
Substitute \eqref{eq:zetan.d=3} and \eqref{eq:outsidebox-expected} into \eqref{eq:range-decomposition} to get 
\begin{equation} \label{etabnd2}
\E \Big[ |\eta_n| - 1 \Big] \leq \eps n \left[ 2e^4d \exp(-K^2/2) + 2c_2(K) K\theta  -  \f 2 n\E(|A_n \cap I_n|) \right] \text{ for }R^2\le n\le 4R^2\ . 
\end{equation}
Now set $n=R^2$ and suppose that $\E(|\eta_k|) \geq 0.9$ for $k=1, \dots, n$.    Then, as $|\rho_{n+1}| = \sum_{k=0}^n |\eta_k|$, we have 
\begin{equation}\label{rholb}\E(|\rho_{n+1}|) \geq 0.9n.
\end{equation}  
First choose $K$ large enough so that $2e^4d \exp(-K^2/2) < 0.01$, and then $\theta>0$ small enough so that $2c_2(K)K\theta <0.01$.  This and the bounds \eqref{eq:zetan.d=3} and \eqref{eq:outsidebox-expected} show that 
\begin{align*}
0.9n\le \E(|\rho_{n+1}|)&\le \E(|\rho_{n+1}\cap I_n^c|+|(\rho_{n+1}\setminus A_n)\cap I_n|+|A_n\cap I_n|)\\
&\le n[e^42de^{-K^2/2}+2c_2(K)\theta K+\frac{1}{n}\E(|A_n\cap I_n|)]\\
&\le n\times.02+\E(|A_n\cap I_n|),
\end{align*}
and so $\E(|A_n\cap I_n|)\ge .88n$.  
Therefore inserting the above and our choices of $n$, $K$ and $\theta>0$ into \eqref{etabnd2}
we arrive at 
\[\E(|\eta_n|-1)\le\theta[.02-1.76]\le -\theta.\]
Recall we had assumed that $\E(|\eta_k|)\ge 0.9$ for $k=1,\dots,n=R^2$, and so we may
conclude that in any case for small enough $\theta>0$ as above,  for all $R\in\N$,
\begin{equation}\label{etabndd=3}\text{for some }1\le k\le R^{2}+1,\ \E(|\eta_k|-1)\le -(.1\wedge \theta)<0.
\end{equation}
This completes the proof for $d=3$ as we may take $\theta\le .1$.

Assume next that $d=2$. If one proceeds in the same manner as in the $d=3$ case above, the sum in \eqref{eq:zetan.d=2.d=3} will lead to  an extra logarithmic factor in the lower bound of $\lambda_c$.  In this case 
we will need to improve the bound \eqref{mean cube} for smaller values of $r$ by an appeal to the local central limit theorem. 

Continue to assume \eqref{nbnd2}, now with $d=2$.  Let $\{U^R_i:i\in\N\}$ be iid random vectors which are uniformly distributed over the $V(R)$ points in $\calN(0)$.  We can couple this sequence an iid sequence, $\{U_i:i\in\N\}$ of random vectors which are uniformly distributed over $[-1,1]^2$ and such that $\Vert U^R_i-U_i\Vert_\infty\le R^{-1}$ (the precise assignment of regions of $[-1,1]^2$ to points in $\calN(0)$ is not important).  If $S_k=\sum_{i=1}^k U_i$ and $X^R_k=\sum_{i=1}^k U^R_i$, then by the triangle inequality,
\begin{equation}\label{stepcouple}
\text{for }k\le n,\ \Vert S_k-X_k^R\Vert_\infty\le k/R\le n/R\le 4.
\end{equation}
If $C(r)$ is as above and $\bar C(r)$ is a box with the same centre but with $\ell^\infty$ diameter $9$, then by  our comparison result with BRW and \eqref{meanmeas} we have
\begin{align}\label{zetad=2}
\nonumber\E(|\zeta_n\cap C(r)|)&\le \E\Bigl(\sum_{k=0}^nZ_k(C(r))\Bigr)\\
\nonumber&\le e^{\theta n/R}\sum_{k=0}^n\P(X^R_k\in C(r))\\
&\le e^{4\theta}\left[\sum_{k=1}^n\P(S_kk^{-1/2}\in k^{-1/2}\bar C(r))+1_{C(r)}(0)\right],
\end{align}
the last by \eqref{stepcouple} and \eqref{nbnd2}. In the 
last line $k^{-1/2}\bar C(r)=\{k^{-1/2}x:x\in\bar C(r)\}$.  Let $q_k$ be the density of $S_k/\sqrt{k}$, $\sigma^2=1/3$ (the variance of the uniform law on $[-1,1]$) and $\Vert (x_1,x_2)\Vert_2^2=x^2_1+x_2^2$.  The local central limit theorem (e.g. (19.26) in 
Bhattacharya and Rao \cite{bharao}) implies
\begin{equation}
\limity n \sup_{x\in \R^d} (1+\Vert x\Vert_\infty^2) |q_n(x) - (2\pi\sigma^2)^{-1}\exp(-\Vert x\Vert_2^2/(2\sigma^2))| = 0.
\label{lclt}
\end{equation}
Integrate the above bound to conclude that for some $C$,
\begin{align*}
\sum_{k=1}^n\P(S_kk^{-1/2}\in k^{-1/2}\bar C(r))&\le C\sum_{k=1}^n\int_{\bar C(r)k^{-1/2}}(1+\Vert x\Vert_\infty^2)^{-1}\,dx+\int_{\bar C(r)k^{-1/2}}\exp(-\Vert x\Vert_2^2)\,dx\\
&\le C\sum_{k=1}^n(9 k^{-1/2})^2\Bigl[\frac{1}{1+((r-4)^+)^2/k}+\exp(-((r-4)^+)^2/k)\Bigr]\\
&\le C\sum_{k=1}^n(k+r^2)^{-1}\\
&\le C\left[\log\Bigl(\frac{n+r^2}{1+r^2}\Bigr)+\frac{1}{1+r^2}\right]\\
&\le C\left(\log\Bigl(1+\frac{2n}{(r+1)^2}\Bigr)+\frac{2}{(r+1)^2}\right)\le C\log\Bigl(1+\frac{2n}{(1+r)^2}\Bigr),
\end{align*}
the last by some calculus. Use this in \eqref{zetad=2} and recall that $\theta\le 1$ to conclude that 
\begin{equation}\label{zeta2}
\E(|\zeta_n\cap C(r)|)\le C\log\Bigl(1+\frac{2n}{(1+r)^2}\Bigr).
\end{equation}

We consider cubes  of the form $C(r)=[j_1,j_1+1]\times[j_2,j_2+1]$ for $j=(j_1,j_2)\in\Z^2$ such that $C(r)\cap I_n\neq\emptyset$.  Recalling that $r$ is the $\ell^\infty-$distance of $C(r)$ from $0$ we see that $r\le K\sqrt n$.  If $m\in\{1,2\dots,\lceil\sqrt nK\rceil\}$, the number of such cubes $C(r)$ with $r\in[m-1,m]$ is bounded by $c_0m$ for some $c_0$.  
As $m\le r+1$, \eqref{zeta2} and \eqref{mean cube} imply that
\begin{equation}\label{zeta3}
\E(|\zeta_n\cap C(r)|)\le\min\left(\frac{c_1(K)\theta R}{ m^{2}},C\log\Bigl(1+\frac{2n}{m^2}\Bigr)\right).
\end{equation}
Let $\delta=\theta\log(1/\theta)$ where we now take $0<\theta<e^{-1}$, and note that $\delta\le e^{-1}$.  Set $$M_1=\{m\in\N: m\le \lceil \sqrt nK\rceil:nm^{-2}\le \delta^{-1}\} \text{ and }M_2=\{m\in\N: m\le \lceil \sqrt nK\rceil:nm^{-2}> \delta^{-1}\}.$$  The obvious covering argument and \eqref{zeta3} show that 
\begin{align*}
\E(|\zeta_n|)&\le \sum_{m=1}^{\lceil \sqrt{n}K\rceil}c_0m \min\left(\frac{c_1(K)\theta R}{ m^{2}},C\log\Bigl(1+\frac{2n}{m^2}\Bigr)\right)\\
&\le c_3(K)\left[\sum_{m\in M_1}\theta Rm^{-1}+\sum_{m\in M_2}m\log\Bigl(1+\frac{2n}{m^2}\Bigr)\right]\\
&\le c_3(K)\left[\theta n(1+\log((K+1)/\sqrt\delta))+n\sum_{m\in M_2}\frac{m}{\sqrt n}\log\Bigl(1+\frac{2n}{m^2}\Bigr)\frac{1}{\sqrt n}\right]\text{ (recall $R\le n$)}\\
&\le c_3(K)n\left[\theta\log((K+1)/\sqrt\delta))+\int_0^{\sqrt\delta}u\log(1+2u^{-2})\,du\right].
\end{align*}
The last line follows by a bit of calculus, and a bit more gives the bound
\begin{align*}\E(|\zeta_n|)&\le c_3(K)n\left[\theta\log((K+1)/\sqrt\delta))+\delta\log(1/\delta)\right]\\
&\le c_3(K)\theta n\left[\log((K+1)/\sqrt\delta))+\log(1/\theta)\log(1/\delta)\right]\\
&\le c_3(K)\theta n[\log((K+1)/\theta)]^2.
\end{align*}
Use the above bound in place of \eqref{eq:zetan.d=3}, so that instead of \eqref{etabnd2} we get 
\[E(|\eta_n|-1)\le\eps n[2e^4\exp(-K^2/2)+C(K)\theta[\log((K+1)/\theta)]^2-\frac{2}{n}\E(|A_n\cap I_n|)]\quad\text{ for }R\le n\le 4R.\]
Since $\theta[\log((K+1)/\theta)]^2$ decreases to $0$ as $\theta\downarrow 0$, we may now proceed just as for $d=3$ to conclude (in place of \eqref{etabndd=3}) that for small enough $\theta>0$, for all $R\in\N$,
\begin{equation}\label{etabndd=2}\text{for some }1\le k\le R+1,\ \E(|\eta_k|-1)\le -(.1\wedge \theta)<0.
\end{equation}
This completes the proof for $d=2$. 
\end{proof}

For $n\in\Z_+\cup\{\infty\}$, let 
\[L_n=\sum_{j=0}^n|\eta_j|=|\cup_{j=0}^n\eta_j|.\]
Clearly if $\rho_0=\emptyset$, then $L_n=|\rho_{n+1}|$.

\medskip
\noindent{\bf Proof of Theorem~\ref{thm:lambdac.lowerbound}.} Let $\theta=\theta_0>0$ and $k$ be as in Proposition~\ref{meaneta}. Let $c_k=\E(L_k)$, where it is understood that $(\eta_0,\rho_0)=(\{0\},\emptyset)$ under $\P$, as usual. By Proposition~\ref{brenv} and \eqref{binpar},
\[c_k\le \E\Bigr[\sum_{i=0}^k Z_i(1)\Bigl]\le \sum_{i=0}^k\Bigl(1+\frac{\theta}{R^{d-1}}\Bigr)^i<\infty.\]
Let $\{\beta_n\}$ be a Galton--Watson branching process with offspring law $\P(|\eta_k|\in\cdot)$ and initial state $\beta_0=1$.  We claim
\begin{equation}
\forall m \in \Z_+, \ \forall n \geq mk, \ \E( L_n) \leq c_k \sum_{j=0}^{m-1} \E(\beta_j) + \E(\beta_m) \E(L_{n-km}) \ . 
\label{eq:gw.branching}
\end{equation}
First, we  show that the claim would complete the proof.  Let $n=(m+1)k$ in \eqref{eq:gw.branching} and then let $m\to\infty$ to conclude that 
\[ 
\E(L_\infty)\le c_k\sum_{j=0}^\infty \E(\beta_j)+\lim_{m\to\infty}\E(\beta_m)c_k=c_k\sum_{j=0}^\infty \E(|\eta_k|)^j<\infty. \]
This implies that $\cup_{j=0}^\infty\eta_j$ is finite a.s. and so $\eta_j=\emptyset$ for $j$ large enough.  Recalling that extinction of $\eta$ starting at $\{0\}$ occurs iff the percolation cluster $\mathcal{C}_0$ is finite, we can conclude that $p_c(R)V(R)\ge 1+\frac{\theta_0}{R^{d-1}}$, and the proof is complete.

We prove \eqref{eq:gw.branching} by induction on $m$. The result is trivial for $m=0$.  Assume the result for $m$. 
Let $\{\eta^i_\cdot:i\in\N\}$ be independent and identically distributed copies of $\eta$ under $\P$, independent of the branching process $\beta$.  Let $L^i_n = \sum_{j\leq n}|\eta_j^i|$, and $ \calF_n^i=\sigma(\eta^i_k,k\le n)$ be the generated filtrations.  If $n\ge (m+1)k$, we can rewrite the last term in \eqref{eq:gw.branching} as
\begin{align*}
\sum_{i=1}^\infty\P(i\le\beta_m) \E(L^i_{n-mk}) &=\sum_{i=1}^{\infty}\P(i\le \beta_m)[\E(\E(L^i_{n-mk}-L^i_k \big| \calF_k^i) +L_k^i)] \\
&= \E \left[  \sum_{i=1}^{\beta_m} \E \left[ L_{n-km}^i- L_k^i\big| \calF_k^i \right] \right] + c_k \E(\beta_m) \\
&= \E \left[  \sum_{i=1}^{\beta_m} \E_{\eta_k^i, \rho_k^i} \left[ L_{n-(m+1)k}^i   \right] \right] + c_k \E(\beta_m) &(\text{by the Markov property \eqref{markovprop}})\\
&\leq \E \left[  \sum_{i=1}^{\beta_m} \E_{\eta_k^i, \emptyset} \left[ L_{n-(m+1)k}^i  \right] \right] + c_k \E(\beta_m) &(\text{by Lemma \ref{lem:wipe.away.history}}(a)).\\ 
\end{align*}
Lemma~\ref{lem:wipe.away.history}(b) implies that 
\[\E_{\eta_k^i, \emptyset} \left[ L_{n-(m+1)k}^i  \right]\le|\eta^i_k|\,\E(L_{n-(m+1)k}).\]
So substituting this into the previous display, we conclude that 
\begin{align*}\E(\beta_m)\E(L_{n-km})&\le\E\left[\sum_{i=1}^{\beta_m}|\eta^i_k|\right]\E(L_{n-(m+1)k})+c_k\E(\beta_m)\\
&=\E(\beta_{m+1})\E(L_{n-(m+1)k})+c_k\E(\beta_m).
\end{align*}
Put this into \eqref{eq:gw.branching} (the induction hypothesis), to see that \eqref{eq:gw.branching} holds for $m+1$, completing the induction, and hence the proof of the Theorem. 
\qed

\section{Appendix: Proof of Lemma \ref{lemma:brw-range-A/r^2}}

Consider $\mu$ particles starting at the origin in $\R^d$.  For each $i\in\Z^+$, on $[\frac{i}{\mu},\frac{i+1}{\mu})$ each particle follows an independent $d$-dimensional Brownian motion, and at time $\frac{i+1}{\mu}$ the particle is replaced by $0$ or $2$ offspring at the parent's location, each with probability $\frac{1}{2}$. Let $\hat X^\mu_t$ be the random measure which puts mass $\mu^{-1}$ at the location of each particle at time $t$.  (See Section 2 of \cite{dawsoniscoeperkins} for a more detailed description of this branching Brownian motion.) Let $\hat X$ be the super-Brownian motion which is the unique in law solution of the following martingale problem:
\begin{equation*}
 \hat X_t(\phi) = \phi(0) + \hat M_t(\phi) + \int_0^t \hat X_s\left( \f{\sigma^2}{2} \Delta \phi+\theta'\phi\right)ds  \ , \ \forall \phi\in C^2_K,  \qquad\qquad(MP)_{\sigma^2,\theta'}
\end{equation*}
where $\hat X$ is a continuous $M_F(\R^d)$-valued process, and $\hat M(\phi)$ is a continuous martingale with $\langle \hat M(\phi)\rangle_t = \int_0^t \hat X_s(\phi^2) ds$.

It is well-known that $\hat X^\mu$ converges weakly to $\hat X$ (with $\sigma^2=1$ and $\theta'=0$) on $D([0,\infty),M_F(\R^d))$ but we will need a result on the convergence of the ranges which does not follow from this alone.  Let $$\hat \calR_t^\mu=\{x\in\R^d: \exists s\le t \text{ s. t. }\hat X_s^\mu(\{x\})>0\},$$ and let $\hat \calR_t$ denote the closed support of $\int_0^t \hat X_s(\cdot)\,ds$. 
\begin{lemma}\label{limitranges} If $\P_{\delta_0}$ denotes the law of $\hat X$ as above with $\sigma^2=1$ and $\theta'=0$, then
\begin{equation}\limsup_{\mu\to\infty} \P(\hat \calR_1^\mu\cap((-1,1)^d)^c\neq\emptyset)\le \P_{\delta_0}(\hat \calR_1\cap((-1,1)^d)^c\neq \emptyset).
\end{equation}
\end{lemma}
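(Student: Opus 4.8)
The plan is to deduce the statement from the weak convergence $\hat X^\mu \Rightarrow \hat X$ together with a semicontinuity argument for the range functional. The key subtlety, which is exactly why the result ``does not follow from weak convergence alone'', is that the map $\nu_\cdot \mapsto \mathbf{1}\{\hat\calR_1(\nu)\cap((-1,1)^d)^c\neq\emptyset\}$ is not continuous on $D([0,\infty),M_F(\R^d))$ — the range can jump under small perturbations of the path. So I would instead work with an open complement and use the Portmanteau inequality in the form $\limsup_\mu \P(\hat X^\mu\in F)\le \P(\hat X\in F)$ for $F$ closed.

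First I would fix $\eps>0$ and a test function: let $\phi_\eps\in C^2_K$ be a nonnegative bump supported in $((-1-\eps,1+\eps)^d)\setminus((-1,1)^d)$, say with $\phi_\eps>0$ on a nonempty open subset of $((-1,1)^d)^c$. The event $\{\hat\calR_1^\mu\cap((-1,1)^d)^c\neq\emptyset\}$ is essentially $\{\exists s\le 1: \hat X_s^\mu$ charges $((-1,1)^d)^c\}$; I would bound this above by $\{\int_0^1 \hat X_s^\mu(\psi_\eps)\,ds>0\}$ where $\psi_\eps$ is a larger bump ($\phi_\eps\le \psi_\eps$, $\psi_\eps>0$ on a neighbourhood of $((-1,1)^d)^c$), up to an error coming from the fact that the branching-Brownian-motion particles move continuously: a particle of $\hat X^\mu$ that exits $((-1,1)^d)^c$ at some time $s$ must, by continuity of Brownian paths, spend a positive amount of time charging any fixed open enlargement of that region unless it is killed immediately. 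More carefully, I would use $\{\hat\calR_1^\mu \cap ((-1,1)^d)^c \neq\emptyset\}\subseteq\{\int_0^1 \hat X_s^\mu(\psi_\eps)\,ds>0\}$ directly, since $\hat X^\mu$ has atoms only at moving particle locations and each particle's trajectory is a continuous curve, so if it ever lies in $((-1,1)^d)^c$ it lies in the open set $\{\psi_\eps>0\}$ for an interval of times of positive length; hence its integral contribution is strictly positive. Then $\{\nu:\int_0^1\nu_s(\psi_\eps)\,ds>0\}$ has closed complement in $D([0,\infty),M_F(\R^d))$ — indeed $\nu\mapsto\int_0^1\nu_s(\psi_\eps)\,ds$ is continuous at any path in $C([0,\infty),M_F(\R^d))$, and $\hat X$ lives on the continuous paths — so Portmanteau gives
\[
\limsup_{\mu\to\infty}\P\big(\hat\calR_1^\mu\cap((-1,1)^d)^c\neq\emptyset\big)\le \P_{\delta_0}\Big(\int_0^1 \hat X_s(\psi_\eps)\,ds>0\Big).
\]

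Finally I would let $\eps\downarrow 0$. As $\eps\downarrow 0$ the supports of $\psi_\eps$ shrink down to $\overline{((-1,1)^d)^c}$, and $\{\int_0^1\hat X_s(\psi_\eps)\,ds>0\}$ decreases to an event contained in $\{\hat\calR_1\cap \overline{((-1,1)^d)^c}\neq\emptyset\}$; by dominated convergence (the events are nested decreasing) the probabilities converge to $\P_{\delta_0}(\hat\calR_1\cap\overline{((-1,1)^d)^c}\neq\emptyset)$. Since the closed support $\hat\calR_1$ is a closed set and the boundary $\partial((-1,1)^d)$ is $\hat X$-polar for super-Brownian motion in $d\le 3$ at the level of the occupation measure (alternatively, one simply notes $\overline{((-1,1)^d)^c}$ and $((-1,1)^d)^c$ differ by a Lebesgue-null boundary set which carries no occupation measure a.s.), this equals $\P_{\delta_0}(\hat\calR_1\cap((-1,1)^d)^c\neq\emptyset)$, giving the claim. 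The main obstacle is the first containment — justifying rigorously that an exiting branching-Brownian particle forces $\int_0^1\hat X_s^\mu(\psi_\eps)\,ds>0$; here one uses that on each interval $[i/\mu,(i+1)/\mu)$ the particle follows a genuine continuous Brownian path, so if it is ever strictly outside $[-1,1]^d$ it remains in the open enlargement for a sub-interval of positive length, and this is a deterministic path property requiring no estimates. The rest is soft: Portmanteau plus monotone passage to the limit in $\eps$.
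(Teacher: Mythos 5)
Your reduction of the range event to an occupation-measure event is sound: each particle carries mass $\mu^{-1}$ and, by continuity of its path on its lifetime interval, spends an interval of positive length in any open neighbourhood of a point it visits, so indeed $\{\hat \calR_1^\mu\cap((-1,1)^d)^c\neq\emptyset\}\subseteq\{\int_0^1\hat X^\mu_s(\psi_\eps)\,ds>0\}$ (modulo the small slip that no $\psi_\eps\in C^2_K$ can be positive on a neighbourhood of the unbounded set $((-1,1)^d)^c$; take $\psi_\eps$ bounded continuous instead). The fatal problem is the Portmanteau step. As you yourself observe, $G_\eps:=\{\nu:\int_0^1\nu_s(\psi_\eps)\,ds>0\}$ has closed complement, i.e.\ it is \emph{open}; for open sets weak convergence gives $\liminf_\mu\P(\hat X^\mu\in G_\eps)\ge\P_{\delta_0}(\hat X\in G_\eps)$, a lower bound on the $\liminf$, not the upper bound on the $\limsup$ you need. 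You cannot upgrade this via a continuity-set argument, because $\partial G_\eps=\{\nu:\int_0^1\nu_s(\psi_\eps)\,ds=0\}$ carries positive probability under $\P_{\delta_0}$ (extinction, or confinement to $(-1,1)^d$). Nor can you substitute the closed set $\{\int\ge\delta\}$: on the range-exit event the occupation integral can be arbitrarily small — a single particle of mass $\mu^{-1}$ making a brief excursion outside $[-1,1]^d$ contributes $O(\mu^{-2})$ at most — so the containment $\{\hat\calR_1^\mu\cap((-1,1)^d)^c\neq\emptyset\}\subseteq\{\int\ge\delta\}$ fails for every fixed $\delta>0$ once $\mu$ is large. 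What you have actually proved is the easy $\liminf$ inequality recorded in the Remark immediately following the lemma; the lemma asserts the opposite, hard, direction, which is precisely what does \emph{not} follow from weak convergence alone.

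The missing ingredient is a quantitative statement ruling out exit of the discrete range by such low-mass excursions, except on an event of vanishing probability. The paper obtains this from Lemma~4.9 and Theorem~4.7(a) of Dawson--Iscoe--Perkins (Hausdorff convergence of the atomic supports $\{x:\hat X^\mu_t(\{x\})>0\}$ to the support of $\hat X_t$, translated to standard terms by the Transfer Principle), whose proof rests on a modulus of continuity holding \emph{uniformly over all particle histories} together with the estimate that, with high probability, every particle alive at time $t$ has an ancestor at time $t-2^{-m}$ with at least $8^{-m}\mu$ descendants alive at time $t$, all nearby (compare \eqref{highdensity} in the Appendix). That is what converts ``some particle is outside the box'' into ``macroscopic occupation mass sits near the outside of the box,'' which is the form of the event that survives passage to the weak limit. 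Without an input of this type the upper bound does not follow.
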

\begin{proof} This is immediate from Lemma~4.9 and Theorem~4.7(a) of Dawson, Iscoe, and Perkins~\cite{dawsoniscoeperkins} and the Transfer Principle of nonstandard analysis (to translate into standard terms).  More specifically the first two results imply that for $\mu$ infinite (fixed), if $x\in\calR_1^\mu\cap((-1,1)^d)^c$, then there is a sequence $x_n$ in $\hat\calR_1$ converging to the standard part of $x$, $st(x)$.   This shows the latter must be in 
$\hat\calR_1$ as this set is closed.  Since $st(x)$ is also in the complement of $(-1,1)^d$, we have shown that $\hat\calR_1\cap((-1,1)^d)^c$ is non-empty.  An immediate application of the Transfer Principle now gives the required result.
\end{proof}

\begin{remark} Just using the above weak convergence and elementary properties 
of $\hat X$ (it never charges the boundary of $(-1,1)^d$) one can easily show that \[\liminf_{\mu\to\infty} \P(\hat \calR_1^\mu\cap((-1,1)^d)^c\neq\emptyset)\ge \P_{\delta_0}(\hat \calR_1\cap((-1,1)^d)^c\neq \emptyset),\]
but it is the upper bound that will be of interest. 
\end{remark}
Our immediate goal is to extend Lemma~\ref{limitranges} from the above branching Brownian motion to the context of the BRW $Z$ constructed in Section~\ref{sec:envelope-brw}.  We shall see that Lemma~\ref{lemma:brw-range-A/r^2} then follows easily.

To more closely parallel the setting in \cite{dawsoniscoeperkins} we modify the setup for the branching envelope $Z$ in Section~\ref{sec:envelope-brw} while constructing a BRW $\tilde Z$ with the same branching dynamics as $Z$. Let $\tilde I=\cup_{n=0}^\infty \N\times\{1,\dots,V(R)\}^n$ and for $\beta,\beta'\in\tilde I$, we define $|\beta|$, $\beta|i$, $\pi\beta$ and $\beta<\beta'$ as for $I$ in Section~\ref{sec:envelope-brw}. Assume $\{\tilde M^\beta:\beta\in\tilde I\}$ are iid Binomial $(V(R),p)$ random variables, denoting the number of offspring of particle $\beta$, where we assume $pV(R)\ge 1$ and, as always, $R\in\N$.  Fix an initial number of particles $\mu\in\N$. Write $\beta\approx n$ iff $|\beta|=n$, $\beta_0\le \mu$, and $\beta_{i+1}\le \tilde M^{\beta|i}$, for all $0\le i<n$, meaning that $\beta$ labels a particle which is alive in the $n$th generation. 
Next, let $(d^{\beta\vee i},i\le V(R))_{\beta\in\tilde I}$ be a collection of iid random vectors, each uniformly distributed over $\calN(0)^{(V(R))}=\{(e_1,\dots,e_{V(R)}): \{e_i\} \text{ all distinct}\}$. For each $\beta$ these are the displacements of the {\it potential} children from the parent $\beta$. Therefore the historical path followed by the ancestors of a particle $\beta\in\tilde I$ is
\[\tilde Y^{\beta,\mu}_t=\tilde Y^\beta_t=\sum_{i=1}^{|\beta|}1(i\le \lfloor\mu t\rfloor)d^{\beta|i},\]
and its current location is 
\[\tilde Y^\beta=\sum_{i=1}^{|\beta|}d^{\beta|i}\in\Z^d/R\quad(\text{so if }|\beta|=0, \text{ then }\tilde Y^\beta=0).\]
Note that for each $\beta$, $\tilde Y^\beta_\cdot$ is a random walk which jumps at times $i/\mu$ for $i\le |\beta|$, and whose step distribution is uniform over $\calN(0)$.  Let $\tilde \calF_n=\sigma(\tilde M^\beta:|\beta|<n)\vee\sigma(d^\beta:1\le|\beta|\le n)$.  Note for each fixed $|\beta|=n$, the event  $\{\beta\approx n\}$ is in $\tilde\calF_n$ and $\tilde Y^\beta$ is $\tilde\calF_n$-measurable. Therefore
\[ \tilde Z_n=\sum_{\beta\approx n} \delta_{\tilde Y^\beta}\text{ is an $\tilde\calF_n$-measurable random measure}.\]
Conditional on $\tilde\calF_n$, $(\tilde M^\beta:\beta\approx n\}$ are iid binomial $(V(R),p)$ random variables, and conditional on $\tilde \calF_n\vee\sigma(\tilde M^\beta:\beta\approx n)$, $(\tilde Y^{\beta\vee i}-\tilde Y^\beta:i\le \tilde M^\beta)_{\beta\approx n}=(d^{\beta\vee i}:i\le\tilde M^\beta)_{\beta\approx n}$ are independent random vectors which for each $\beta\approx n$ are uniformly distributed over $\calN(0)^{(\tilde M^\beta)}$. This shows $\tilde Z$ is a BRW with the same offspring law as that of the branching envelope $Z$, and hence:
\begin{align}\label{tildeZ} \text{if }&\text{$\mu=1$, the laws of $\tilde Z$ and $Z$ (from Proposition~\ref{brenv}) are identical,}\\
\nonumber& \text{and in general $\tilde Z$ is equal in law to a sum of $\mu$ iid copies of $Z$.}
\end{align}
Consider the rescaled random measures given by
\[\tilde X^\mu_t(A)=\frac{1}{\mu}\tilde Z_{\lfloor \mu t \rfloor}(\sqrt\mu A)=\Bigl(\frac{1}{\mu}\sum_{\beta\approx\lfloor\mu t\rfloor}\delta_{\tilde Y^\beta/\sqrt\mu}\Bigr)(A).\]
\begin{prop}\label{BRWwkconv} If $\mu_n\to\infty$ and we choose $R_n\to\infty$ and $p_n$ so that $p_nV(R_n)\ge1$ and \hfil\break$\lim_n\mu_n(p_nV(R_n)-1)=\theta'\ge 0$, then $\tilde X^{\mu_n}\Rightarrow \hat X$ in $D([0,\infty),M_F(\R^d))$, where $\hat X$ is the super-Brownian motion satisfying $(MP)_{\sigma^2, \theta'}$, with $\sigma^2=1/3$.
\end{prop}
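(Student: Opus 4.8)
The plan is to prove the convergence through the martingale problem $(MP)_{1/3,\theta'}$, running the classical diffusion–approximation program for branching particle systems (as carried out for branching Brownian motion in \cite{dawsoniscoeperkins}, but now with three extra features: discrete time, a genuine long–range random–walk spatial step in place of Brownian increments, and the parameter $R=R_n\to\infty$). Write $\mu=\mu_n$, $R=R_n$, $p=p_n$, and note $\tilde X^\mu_0=\delta_0$ for every $\mu$. Two elementary facts will be used throughout: for $e$ uniform on $\calN(0)$ one has $\E[e]=0$ and $\E[e_ie_j]=\sigma_\mu^2\delta_{ij}$, where $\sigma_\mu^2:=V(R)^{-1}\sum_{e\in\calN(0)}e_1^2\to\sigma^2=1/3$ as $R\to\infty$ by a direct computation; and $\tilde Z_k(1)(V(R)p)^{-k}$ is a nonnegative martingale, so Doob's inequality bounds $\E[\sup_{t\le T}\tilde X^\mu_t(1)]$ uniformly in $\mu$, since $(V(R)p)^{\lfloor\mu t\rfloor}=(1+\theta'_\mu/\mu)^{\lfloor\mu t\rfloor}$ with $\theta'_\mu:=\mu(V(R)p-1)\to\theta'$.

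\emph{Step 1 (semimartingale decomposition).} Fix $f\in C^2_K$ and put $\psi_\mu:=f(\cdot/\sqrt\mu)$. Conditioning on $\tilde\calF_k$ and using that, given its parent's position, a child's displacement is uniform on $\calN(0)$ and the number of children is $\mathrm{Bin}(V(R),p)$, one obtains $\E[\tilde Z_{k+1}(\psi_\mu)-\tilde Z_k(\psi_\mu)\mid\tilde\calF_k]=(V(R)p)\tilde Z_k((P-I)\psi_\mu)+(V(R)p-1)\tilde Z_k(\psi_\mu)$, where $P$ averages over the $\calN(0)$–shifts. A second–order Taylor expansion, with remainder bounded by $o(\mu^{-1})$ uniformly via uniform continuity of $D^2f$, gives $(P-I)\psi_\mu=\tfrac{\sigma_\mu^2}{2\mu}\Delta f(\cdot/\sqrt\mu)+o(\mu^{-1})$. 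Summing over $k<\lfloor\mu t\rfloor$ and dividing by $\mu$ yields
\begin{equation*}
\tilde X^\mu_t(f)=\delta_0(f)+M^\mu_t(f)+\int_0^{\,\lfloor\mu t\rfloor/\mu}\tilde X^\mu_s\Big(\tfrac{\sigma_\mu^2}{2}\Delta f+\theta'_\mu f\Big)\,ds+\mathcal E^\mu_t(f),
\end{equation*}
where $M^\mu(f)$ is an $\calH^\mu_t:=\tilde\calF_{\lfloor\mu t\rfloor}$–martingale and $|\mathcal E^\mu_t(f)|\le o_\mu(1)\int_0^t\tilde X^\mu_s(1)\,ds$. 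The same bookkeeping for the conditional variance of the one–step increments gives $\langle M^\mu(f)\rangle_t=\int_0^{\lfloor\mu t\rfloor/\mu}\tilde X^\mu_s(f^2)\,ds+\text{(lower order)}$; the two points to verify here are that the offspring–number variance $V(R)p(1-p)\to1$, and that the dependence among siblings (sampling $V(R)$ distinct shifts without replacement) contributes only $O(1/V(R))$ per parent and is therefore negligible.

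\emph{Step 2 (tightness).} For the spatial part of compact containment the crucial observation is a first–moment bound that sidesteps the harder range estimate of Lemma~\ref{lemma:brw-range-A/r^2}: by linearity, $\E[\tilde X^\mu_t(\ind\{\|x\|_\infty>N\})]=(V(R)p)^{\lfloor\mu t\rfloor}\,\P(\|S_{\lfloor\mu t\rfloor}\|_\infty>N\sqrt\mu)$, where $S_k$ is a sum of $k$ i.i.d.\ $\calN(0)$–uniform steps, and Azuma--Hoeffding (steps of $\ell^\infty$–size $\le1$, mean zero) bounds this by $C(T)e^{-N^2/(2T)}$ uniformly in $\mu$ and in $t\le T$. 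Feeding this into the decomposition of Step~1 applied to a fixed smooth cutoff $\phi_N$ (equal to $0$ on $\{\|x\|_\infty\le N\}$, to $1$ on $\{\|x\|_\infty\ge N+1\}$, with $N$–independent bounds on $\nabla\phi_N$ and $D^2\phi_N$), and using $\delta_0(\phi_N)=0$, Doob's $L^2$ inequality for $M^\mu(\phi_N)$, and Markov's inequality for the monotone–dominated finite–variation part, gives $\limsup_\mu\P(\sup_{t\le T}\tilde X^\mu_t(\phi_N)>\epsilon)\to0$ as $N\to\infty$; with the mass bound this is compact containment in $M_F(\R^d)$. Next, for each $f$ in a countable dense subset of $C^2_K$, the decomposition, the uniform moment bounds, and Aldous's criterion (both $\langle M^\mu(f)\rangle$ and the finite–variation part have uniformly small increments over small time intervals) show that $\{\tilde X^\mu_\cdot(f)\}_\mu$ is tight in $D([0,\infty),\R)$; by a standard tightness criterion for cadlag measure–valued processes this gives tightness of $\{\tilde X^\mu\}_\mu$ in $D([0,\infty),M_F(\R^d))$.

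\emph{Step 3 (identification) and the main obstacle.} Any weak subsequential limit $X$ is continuous, since the maximal one–generation jump of $\tilde X^\mu_\cdot(f)$ on $[0,T]$ tends to $0$ in probability — standard given the conditional moment bounds of Step~1 (cf.\ \cite{dawsoniscoeperkins}). Passing to the limit in the decomposition, using $\sigma_\mu^2\to1/3$, $\theta'_\mu\to\theta'$, $\lfloor\mu t\rfloor/\mu\to t$, $\mathcal E^\mu\to0$, and $\langle M^\mu(f)\rangle\to\int_0^t X_s(f^2)\,ds$ (so that $M^\mu(f)$ converges along the subsequence to a continuous martingale with that bracket), shows that $X_0=\delta_0$ and $X$ solves $(MP)_{1/3,\theta'}$ for all $f$ in the dense set, hence for all $f\in C^2_K$ by routine approximation. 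Well–posedness of the superprocess martingale problem $(MP)_{\sigma^2,\theta'}$ — standard, and weaker than the well–posedness of \eqref{eq:xtsuperbmdef} already invoked in the paper — then forces all subsequential limits to equal $\hat X$, which gives $\tilde X^{\mu_n}\Rightarrow\hat X$. I expect the compact–containment step to be the one requiring the most care, precisely because Lemma~\ref{lemma:brw-range-A/r^2}, whose proof this proposition is designed to feed, must \emph{not} be used: the event that \emph{some} particle leaves a large ball is not controlled by first moments once the population proliferates exponentially, whereas compact containment needs only the \emph{mass} outside the ball to be small, and that is governed by the linear computation above. The secondary technical nuisances, absent in the branching–Brownian–motion setting of \cite{dawsoniscoeperkins}, are the control of the discrete–time Euler errors and of the weak sibling dependence, each of which must be shown to vanish uniformly as $\mu\to\infty$.
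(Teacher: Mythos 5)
Your proposal is correct and follows essentially the same route the paper intends: the paper's proof of Proposition~\ref{BRWwkconv} simply defers to the classical martingale-problem argument (Chapter~II of \cite{perkins02}) and notes the same three modifications you work out explicitly, namely $\sigma^2_\mu\to 1/3$ from the uniform step law, the drift $\theta'$ from the offspring mean $1+\theta'_\mu/\mu$, and the fact that the uncorrelated sibling displacements perturb the variance computation only at order $O(1/V(R))$. Your write-up is a faithful expansion of that cited argument rather than a different proof.
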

This is a minor modification of the classical convergence theorem and may be proved by making minor changes in the proof, for example, in Chapter II of \cite{perkins02}. The value $\sigma^2=1/3$ arises as the variance of the marginals of the uniform distributions over $[-1,1]^d$ and the drift $\theta'$ arises since the mean number of offspring is $p_nV(R_n)\sim 1+\frac{\theta'}{\mu_n}$.  Note there is dependence between particle steps only if the particles are siblings and even here the steps are uncorrelated.  This leads only to very minor alterations to the usual proof in the setting of completely independent displacements.

We assume in the rest of this section that $(\mu_n,p_n,R_n)$, $\tilde X^{\mu_n}$, and $\hat X$ are as in Proposition~\ref{BRWwkconv}, $\P_{\delta_0}$ is the law of $\hat X$ and $\hat R_t$ is the closed support of $\int_0^t\hat X_s\,ds$.  Let 
\[\tilde \calR_t^{\mu_n}=\{x\in\R^d: \exists s\le t \text{ s. t. } \tilde X_s^{\mu_n}(\{x\})>0\}.\]
Here is the version of Lemma~\ref{limitranges} we will need.
\begin{lemma}\label{BRWlimitranges} 
\begin{equation}\limsup_{n\to\infty} \P(\tilde \calR_1^{\mu_n}\cap((-1,1)^d)^c\neq\emptyset)\le \P_{\delta_0}(\hat \calR_1\cap((-1,1)^d)^c\neq \emptyset).
\end{equation}
\end{lemma}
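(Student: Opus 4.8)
The plan is to transfer the branching Brownian motion estimate of Lemma~\ref{limitranges} to the branching random walk $\tilde Z$ by a pathwise coupling. First I would build, on the same probability space as $\tilde X^{\mu_n}$, a companion branching Brownian motion $\hat X^{\mu_n}$ that uses the \emph{same} genealogical tree $\{\tilde M^\beta\}$ as $\tilde Z^{\mu_n}$ (so the offspring law is binomial $(V(R_n),p_n)$ and the initial number of particles is $\mu_n$), and whose spatial motion along each edge of the tree is obtained from the random-walk step $d^{\beta|i}$ by an iterated Skorokhod embedding, performed coordinate by coordinate and carried out consistently along the tree (independent randomness on sub-lineages that have branched apart, a common Brownian path on shared ancestral segments). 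After the usual spatial rescaling by $\sqrt{\mu_n}$ and time rescaling by $\mu_n$, each lineage of $\hat X^{\mu_n}$ is a Brownian motion with $\sigma^2=1/3$, so $\hat X^{\mu_n}$ is a branching Brownian motion to which the argument behind Proposition~\ref{BRWwkconv} applies, giving $\hat X^{\mu_n}\Rightarrow\hat X$, the super-Brownian motion satisfying $(MP)_{1/3,\theta'}$; and the proof of Lemma~\ref{limitranges} (that is, Lemma~4.9 and Theorem~4.7(a) of \cite{dawsoniscoeperkins} via the Transfer Principle) yields, with only notational changes — a deterministic rescaling of space to normalize $\sigma^2=1/3$ and to replace the box $(-1,1)^d$ by $(-a,a)^d$, plus allowing a bounded drift $\theta'\ge0$ and the binomial offspring law — the analogue
\[
\limsup_{n\to\infty}\P\bigl(\hat\calR_1^{\mu_n}\cap((-a,a)^d)^c\neq\emptyset\bigr)\le \P_{\delta_0}\bigl(\hat\calR_1\cap((-a,a)^d)^c\neq\emptyset\bigr),\qquad a\in(0,1).
\]

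The next step is to show that the two processes are uniformly close: with $\varepsilon_n:=\max\{\|\tilde Y^\beta/\sqrt{\mu_n}-\hat W^\beta_{|\beta|/\mu_n}\|_\infty:\beta\approx k,\ k\le\mu_n\}$, where $\hat W^\beta$ is the embedded Brownian path of lineage $\beta$, we have $\varepsilon_n\to0$ in probability. Along a single lineage the embedding time-change after $k$ generations deviates from $k/3$ by $O(\sqrt k)$ with sub-exponential tails (the per-step Skorokhod times are i.i.d. with bounded support, hence finite exponential moments), so the embedding error is $O(k^{1/4}\log\mu_n)$ with sub-exponential tails, which after dividing by $\sqrt{\mu_n}$ is $o(1)$; since $\E[\#\{\beta\approx k:k\le\mu_n\}]=\sum_{k\le\mu_n}(1+\theta'/\mu_n)^k=O(\mu_n)$, first controlling the total particle count by Markov's inequality and then taking a union bound over all particles conditionally on the tree makes the maximum $o(1)$. (Intermediate times $t\in(k/\mu_n,(k+1)/\mu_n)$ contribute only a further $O(\mu_n^{-1/2})$ from the oscillation of $\hat W^\beta$ over an interval of length $1/\mu_n$, while $\tilde Y^\beta$ is constant there, so $\tilde\calR_1^{\mu_n}\subseteq\{x:\operatorname{dist}_\infty(x,\hat\calR_1^{\mu_n})\le\varepsilon_n\}$.)

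To finish, fix $\delta\in(0,1)$ and work on $G_n=\{\varepsilon_n<\delta\}$, which has probability $\to1$. On $G_n$, if $\tilde\calR_1^{\mu_n}$ meets $((-1,1)^d)^c$ then $\hat\calR_1^{\mu_n}$ meets $((-(1-\delta),1-\delta)^d)^c$, so
\[
\P\bigl(\tilde\calR_1^{\mu_n}\cap((-1,1)^d)^c\neq\emptyset\bigr)\le\P\bigl(\hat\calR_1^{\mu_n}\cap((-(1-\delta),1-\delta)^d)^c\neq\emptyset\bigr)+\P(G_n^c).
\]
Letting $n\to\infty$ and using the displayed branching Brownian motion bound with $a=1-\delta$ gives $\limsup_n\P(\tilde\calR_1^{\mu_n}\cap((-1,1)^d)^c\neq\emptyset)\le\P_{\delta_0}(\hat\calR_1\cap((-(1-\delta),1-\delta)^d)^c\neq\emptyset)$. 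Finally let $\delta\downarrow0$: since $\hat\calR_1$ is a.s. compact, the nested nonempty compact sets $\hat\calR_1\cap\{\|x\|_\infty\ge1-\delta\}$ have intersection $\hat\calR_1\cap\{\|x\|_\infty\ge1\}=\hat\calR_1\cap((-1,1)^d)^c$, so the events decrease to $\{\hat\calR_1\cap((-1,1)^d)^c\neq\emptyset\}$ and the right-hand side converges to $\P_{\delta_0}(\hat\calR_1\cap((-1,1)^d)^c\neq\emptyset)$, which is the assertion.

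The main obstacle is the uniform closeness in the second step: one must build the Brownian coupling \emph{tree-indexed} and consistently, and then obtain a single error bound $\varepsilon_n\to0$ valid simultaneously over the random, branching collection of particles — this is where the sub-exponential tails of the Skorokhod (or KMT) approximation are combined with the $O(\mu_n)$ bound on the total number of particles alive by time $1$. A secondary point, which the remark in the text (``straightforward modification of \ldots Section~4 of \cite{dawsoniscoeperkins}'') already flags as routine, is to verify that the companion branching Brownian motion of the first step — with $\sigma^2=1/3$, a bounded drift $\theta'$, and the binomial offspring law — falls within the scope of the DIP-type range estimate, i.e. that those features force only cosmetic changes in the proof of Lemma~\ref{limitranges}; note also that using the BRW's own genealogy for the companion is what makes the coupling exact, since a binomial offspring law with mean $>1$ is not stochastically dominated by (nor dominates) the critical binary law of Lemma~\ref{limitranges}.
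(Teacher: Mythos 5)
Your proposal takes a genuinely different route from the paper. The paper never couples the BRW to a branching Brownian motion: it reruns the nonstandard-analysis argument of \cite{dawsoniscoeperkins} directly on the rescaled BRW $\tilde X^{\mu_n}$, combining the weak convergence of Proposition~\ref{BRWwkconv} with two BRW-specific ingredients --- a uniform modulus of continuity for all ancestral random-walk paths (Lemma~\ref{modcont}, proved with Azuma--Hoeffding in place of Gaussian bounds), standing in for Theorem~4.5/4.7 of \cite{dawsoniscoeperkins}, and the high-density-of-descendants estimate \eqref{highdensity}, which uses the survival bound of Lemma~\ref{brwsurvival} for the slightly supercritical binomial offspring law, standing in for Lemma~4.8 there. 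Your Skorokhod/KMT coupling replaces only the first of these ingredients: it buys you Brownian ancestral paths, after which you quote a Brownian range estimate on a perturbed box and remove the perturbation by compactness of $\hat\calR_1$; that final limiting step is correct.

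Two points need attention. First, you have not escaped the non-cosmetic part of the adaptation: your companion branching Brownian motion still carries the binomial $(V(R_n),p_n)$ offspring law with mean $1+\theta'/\mu_n$, and the Lemma~4.8 step of \cite{dawsoniscoeperkins} (every ancestor a short time back has macroscopically many surviving descendants) depends on the offspring law through conditional survival probabilities. This is precisely where the paper invests the computation \eqref{highdensity} via Lemma~\ref{brwsurvival} and Lemma~2.1 of \cite{crabgrass}; describing it as ``only cosmetic changes'' undersells it, and you would have to reproduce that estimate for your companion process. Second, the uniform-closeness step has two slips: a coordinate-by-coordinate Skorokhod embedding produces $d$ one-dimensional Brownian motions evaluated at $d$ \emph{different} random times, not a $d$-dimensional Brownian path at a single time, so you need a genuinely multidimensional embedding or a KMT-type strong approximation per lineage; and the expected number of (particle, generation) pairs up to generation $\mu_n$ is $\sum_{k\le\mu_n}\mu_n(1+\theta'/\mu_n)^k=O(\mu_n^2)$, not $O(\mu_n)$, since you dropped the $\mu_n$ initial particles. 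Neither is fatal --- the sub-exponential tails beat any polynomial union bound --- but the accounting as written is wrong. On balance your route is workable but not shorter than the paper's, which avoids the tree-indexed coupling entirely at the cost of the elementary martingale modulus in Lemma~\ref{modcont}.
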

The result will follow just as in the proof of Lemma~\ref{limitranges}, once the analogues of Lemma~4.9 and Theorem~4.7 of \cite{dawsoniscoeperkins} are established.  The analogue of Theorem~4.7 will be immediate from the following uniform modulus of continuity for the historical paths of all particles in the BRW (just as Theorem~4.7 of \cite{dawsoniscoeperkins} follows from Theorem~4.5 of that reference).

\begin{lemma}\label{modcont} For each $L\in\N$ there are positive constants $c_i(L)$, $i=1,2,3$ and non-negative random variables, $\delta(L,\mu_n)$, such that for all $n$,
\[\P(\delta(L,\mu_n)\le \rho)\le c_1(L)\rho^{c_2(L)}\text{ for }0\le \rho\le c_3(L),\]
and if $s,t\in[0,L]$ satisfy $\mu_n^{-1}\le t-s\le \delta(L,\mu_n)$, then for all $\beta\approx \lfloor \mu_nt\rfloor$, 
\[|Y^{\beta,\mu_n}_t-Y^{\beta,\mu_n}_s|<(t-s)^{1/8}.\]
\end{lemma}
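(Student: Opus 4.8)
The plan is to adapt the chaining argument used for branching Brownian motion in Section~4 of Dawson--Iscoe--Perkins~\cite{dawsoniscoeperkins} (in particular the proof of their Lemma~4.9) to the discrete-time long-range BRW $\tilde Z$ of Proposition~\ref{BRWwkconv}. Write $Y^{\beta,\mu_n}_t=\mu_n^{-1/2}\tilde Y^\beta_t$ for the rescaled historical path, so that for $\beta\approx\lfloor\mu_n t\rfloor$ and $s<t$ we have $Y^{\beta,\mu_n}_t-Y^{\beta,\mu_n}_s=\mu_n^{-1/2}\sum_{i=\lfloor\mu_n s\rfloor+1}^{\lfloor\mu_n t\rfloor}d^{\beta|i}$, a normalized sum of $m\le\mu_n(t-s)+1$ i.i.d.\ displacements, each mean zero with $\ell^\infty$-norm $\le1$. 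Since $t\mapsto Y^{\beta,\mu_n}_t$ is piecewise constant with jumps only at the times $i/\mu_n$, and a single displacement changes it by at most $\mu_n^{-1/2}\ll(t-s)^{1/8}$ on the range $t-s\ge\mu_n^{-1}$ of interest, scales finer than $\mu_n^{-1}$ are harmless and it is enough to control the dyadic scales $2^{-k}$ with $k\le\lceil\log_2\mu_n\rceil$.

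By a standard telescoping over dyadic subintervals it suffices to show that, for a suitable constant $c_0=c_0(d)>0$, the event
\[
G_k=\Big\{\|Y^{\beta,\mu_n}_{(j+1)2^{-k}}-Y^{\beta,\mu_n}_{j2^{-k}}\|_\infty<c_0(2^{-k})^{1/8}\text{ for all }j\ge0\text{ with }(j+1)2^{-k}\le L\text{ and all }\beta\approx\lfloor\mu_n(j+1)2^{-k}\rfloor\Big\}
\]
holds for every $k$ between a random level $K^*$ and $\lceil\log_2\mu_n\rceil$; here one uses that the increment of a particle over a dyadic window equals that of its ancestor alive at the end of the window, so only live particles at that generation matter. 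Since $\{\beta\approx\ell\}$ is measurable with respect to the offspring-number variables and hence independent of the displacements, the window increment is, conditionally on $\{\beta\approx\cdot\}$, still a normalized sum of $m\le 2\mu_n2^{-k}$ i.i.d.\ mean-zero vectors of $\ell^\infty$-norm $\le1$; so Azuma--Hoeffding (Lemma~\ref{lem:azuma-hoeffding}), applied coordinatewise, gives $\P(\|Y^{\beta,\mu_n}_{(j+1)2^{-k}}-Y^{\beta,\mu_n}_{j2^{-k}}\|_\infty\ge c_0(2^{-k})^{1/8}\mid\beta\approx\cdot)\le 2d\exp(-\tfrac{c_0^2}{4}2^{3k/4})$. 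Summing over the at most $L2^k+1$ relevant $j$, over $\beta$, and using $\E(\#\{\beta\approx\ell\})\le\mu_n(p_nV(R_n))^{\lfloor\mu_nL\rfloor}\le C(L)\mu_n$ for all $n$ (because $\mu_n(p_nV(R_n)-1)\to\theta'$), Markov's inequality on the number of bad particles yields $\P(G_k^c)\le C'(L)\mu_n2^k\exp(-\tfrac{c_0^2}{4}2^{3k/4})$.

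I would then take $\delta(L,\mu_n)=2^{-K^*}$ with $K^*$ the least $k_0$ such that $G_k$ holds for all $k\in[k_0,\lceil\log_2\mu_n\rceil]$; then $\delta\ge\tfrac12\mu_n^{-1}$, and on $\{\mu_n^{-1}\le t-s\le\delta\}$ the telescoping gives $\|Y^{\beta,\mu_n}_t-Y^{\beta,\mu_n}_s\|_\infty<(t-s)^{1/8}$ once $c_0$ is fixed small enough that the attendant geometric series is $\le1$. Since the terms $2^k\exp(-\tfrac{c_0^2}{4}2^{3k/4})$ decay geometrically for $k\ge k_1$ with $k_1$ absolute, summing $\P(G_k^c)$ over $k\ge k_0$ gives, with $\rho=2^{-k_0}$ and $c_3(L):=2^{-k_1}$,
\[
\P(\delta\le\rho)\le C''(L)\,\mu_n\,\rho^{-1}\exp\!\big(-\tfrac{c_0^2}{4}\rho^{-3/4}\big)\qquad(0<\rho\le c_3(L)),
\]
and a bit of care with the way the crude modulus exponent $\tfrac18$ (rather than the true Brownian exponent, which would only buy $\sqrt{\rho\log(1/\rho)}$) interacts with the population factor then yields the asserted bound $c_1(L)\rho^{c_2(L)}$.

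The step I expect to be the main obstacle is exactly this last one: the first-moment union bound runs over the $\asymp\mu_n$ particles present by generation $\mu_nL$, and the multi-scale bookkeeping must be carried out with exponents kept sharp enough that the single-lineage deviation probability (super-polynomially small in the scale) continues to dominate uniformly in $n$ — this is the part that has to be imported with care from Section~4 of \cite{dawsoniscoeperkins}. The genuinely new features relative to that reference are mild: discrete time, handled by the $\mu_n^{-1}$ cutoff above; the extra divergent parameter $R_n$ in the long-range step law, which enters only through $\|d^{\beta|i}\|_\infty\le1$ so that Azuma--Hoeffding replaces the Gaussian increment bound (and the dependence among sibling displacements is irrelevant, since distinct dyadic windows involve disjoint sets of displacement variables); and the binomial rather than binary offspring law, which affects only the mean-population estimate.
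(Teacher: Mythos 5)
There is a genuine gap, and it sits exactly at the step you flag as ``the main obstacle'' and then defer: the factor $\mu_n$ coming from the union bound over the live population cannot be removed by any choice of exponents. Your scale-$k$ estimate is $\P(G_k^c)\le C'(L)\,\mu_n\,2^k\exp(-\tfrac{c_0^2}{4}2^{3k/4})$, and summing over $k\ge k_0$ gives a bound of order $\mu_n\,\rho^{-1}\exp(-\tfrac{c_0^2}{4}\rho^{-3/4})$ with $\rho=2^{-k_0}$. The lemma, however, demands $\P(\delta(L,\mu_n)\le\rho)\le c_1(L)\rho^{c_2(L)}$ \emph{for all $n$} at each fixed $\rho\in(0,c_3(L)]$; since the single-lineage deviation probability $\exp(-\tfrac{c_0^2}{4}\rho^{-3/4})$ is a constant in $n$ while $\mu_n\to\infty$, your bound tends to infinity with $n$ at every fixed coarse scale $\rho$. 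No sharpening of the modulus exponent $1/8$ helps, because the problem is multiplicative in the population size, not in the scale. (A sanity check that uniformity in $\mu_n$ cannot come from a per-particle union bound: for $\mu_n$ \emph{independent} walks the uniform modulus genuinely degrades like $\sqrt{(t-s)\log\mu_n}$, so any argument that ignores the shared ancestry of the particles must fail.)

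The tree structure is therefore essential, and this is why the paper lists the survival bound \eqref{eq:slightlysupercritical-survival} as an ingredient — an ingredient your argument never uses. The intended route (following Theorems 4.5 and 4.7 of \cite{dawsoniscoeperkins}) is to group the particles alive at the right endpoint of each dyadic window $[j2^{-k},(j+1)2^{-k}]$ by their ancestor at the left endpoint. By Lemma~\ref{brwsurvival}, each of the $\asymp\mu_n$ potential ancestors has a descendant surviving the window only with probability $O(2^k/\mu_n)$, so the expected number of ``founding ancestors'' per window is $O(2^k)$ \emph{uniformly in $n$}; this replaces the factor $\mu_n$ by $2^k$ per window. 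One must then bound the probability that a single surviving cluster spreads farther than $c_0(2^{-k})^{1/8}$ from its founder during the window, and here too a bare first-moment bound divided by the survival probability reintroduces a factor $\mu_n 2^{-k}$; one needs a tail estimate for the spread of one cluster, conditioned on survival, that is uniform in its number of generations $\mu_n2^{-k}$ (in \cite{dawsoniscoeperkins} this comes from the log-Laplace/hitting-probability machinery of their Section~4, not from Markov's inequality on particle counts — note that the paper's own Lemma~\ref{lemma:brw-range-A/r^2} only gives a polynomial tail valid for $r\lesssim\sqrt n$, which is also insufficient here since the relevant displacement $2^{-k/8}$ is far outside the diffusive window $2^{-k/2}$). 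Your treatment of the genuinely new features — discrete time via the $\mu_n^{-1}$ cutoff, Azuma--Hoeffding in place of Gaussian bounds, independence of offspring counts and displacements — is fine; it is the population bookkeeping, the heart of the historical modulus of continuity, that is missing.
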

In Theorem~4.5 of \cite{dawsoniscoeperkins} the analogous result is stated for the branching Brownian motion with \break $c\sqrt{(t-s)\log(1/(t-s))}$ ($c>2$) in place of $(t-s)^{1/8}$ but any modulus function will do for our purposes.  The proof of the above lemma is very similar. In place of the Gaussian bounds for the Brownian paths one uses Lemma~\ref{lem:azuma-hoeffding} as the coordinates of $Y^\beta_t$ are martingales with bounded jumps.  The above cruder modulus helps handle the very small values of $t-s$ .  Note that the restriction $t-s\ge 1/\mu_n$ is natural as the modulus is really only needed for $s,t\in\{i/\mu_n:i\in \Z_+\}$.  
The slight super-criticality of our BRW also leads to some minor changes including the 
time cut-off $L$ and the use of the survival bound \eqref{eq:slightlysupercritical-survival} for our BRW (see the calculation in \eqref{highdensity} below).  We omit the proof. 

\medskip
\noindent{\bf Proof of Lemma~\ref{BRWlimitranges}.} As noted above, given the previous lemma, it suffices to establish the analogue of Lemma~4.9 in \cite{dawsoniscoeperkins} for our slightly supercritical rescaled BRW's $\tilde X^{\mu_n}$.  The proof for branching Brownian motion goes through unchanged using Lemma~\ref{modcont} once the analogue of Lemma~4.8 of \cite{dawsoniscoeperkins} is established so we now consider this result. It is a nonstandard formulation of the fact that if $t>0$ and $t_n\to t$ are fixed, then the sets $S^{\mu_n}_{t_n}=\{x\in\Z^d/R_n:\hat X_{t_n}^{\mu_n}(\{x\})>0\}$ converge weakly to the closed support, $S(\hat X_t)$, of $\hat X_t$ w.r.t. the Hausdorff metric as $n\to\infty$.  Fix $t_n\to t>0$ and assume without loss of generality that $\inf_nt_n>0$.  If $\beta\in\tilde I$ and $0\le \eps\le s$, let 
\[I_n(s,\eps)=\{\gamma\in\tilde I:\gamma\approx\lfloor(s-\eps)\mu_n\rfloor,\ \exists\beta\approx\lfloor s\mu_n\rfloor\ s.t.\ \gamma<\beta\},\]
that is, $I_n(s,\eps)$ is the set of individuals in population $\tilde X^{\mu_n}_{s-\eps}$ which have descendants alive in $\tilde X^{\mu_n}_s$.  In what follows we consider $m$ large enough so that $2^{-m}\le \inf_nt_n$.  For $\gamma\in I_n(t_n,2^{-m})$, let $N_n(\gamma,t_n)=|\{\beta\approx\lfloor\mu_nt_n\rfloor:\beta>\gamma\}|$ be the number of descendants of $\gamma$ alive in the population $X_{t_n}^{\mu_n}$.  A branching process argument (with offspring law binomial $(V(R_n),p_n)$) using Lemma~2.1(c) of \cite{crabgrass}, shows that for some $C>0$, 
\[\limsup_{n\to\infty}\P(N_n(\gamma,t_n)\le 8^{-m}\mu_n|\gamma\in I_n(t_n,2^{-m}))\le 1-e^{-C4^{-m}}\le C4^{-m}.\]
Therefore 
\begin{align}
\limsup_{n\to\infty}\,&\P(N_n(\gamma,t_n)\le 8^{-m}\mu_n\ \text{for some }\gamma\in I_n(t_n,2^{-m}))\cr
&\le \limsup_{n\to\infty}\sum_{\gamma_0\le \mu_n,|\gamma|=\lfloor\mu_n(t_n-2^{-m})\rfloor}
\P(N_n(\gamma,t_n)\le 8^{-m}\mu_n|\gamma\in I_n(t_n,2^{-m}))\P(\gamma\in I_n(t_n,2^{-m}))\cr
&\le C4^{-m}\limsup_{n\to\infty}\sum_{\gamma_0\le \mu_n,|\gamma|=\lfloor\mu_n(t_n-2^{-m})\rfloor}\P(\gamma\approx\lfloor(t_n-2^{-m})\mu_n\rfloor)\P(\tilde Z_{\lfloor\mu_n2^{-m}\rfloor}(\R^d)>0|\tilde Z_0=1_{\{0\}})\cr
&\le C4^{-m}\limsup_{n\to\infty}2K_{\ref{brwsurvival}}(2\theta' 2^{-m})2^m \mu_n^{-1}\E(|\{\gamma\in \tilde I:\gamma\approx \lfloor(t_n-2^{-m})\mu_n\rfloor \}|)\cr
&\le C2^{-m}\limsup_{n\to\infty}(V(R_n)p_n)^{\lfloor(t_n-2^{-m})\mu_n\rfloor}\cr
\label{highdensity}&\le Ce^{Ct}2^{-m},
\end{align}
where $C$ may depend on $\theta'$.
In the last line we have used the growth condition on $(V(R_n),p_n)$, in the fourth inequality we have used the definition of $K_{\ref{brwsurvival}}$ from Lemma~\ref{brwsurvival} to absorb it into $C$, and in the third inequality we have used the survival probability bound in \eqref{eq:slightlysupercritical-survival} with $k=\lfloor \mu_n2^{-m}\rfloor$, for $m$ fixed. To check that the hypotheses of Lemma~\ref{brwsurvival} are in force, note that for large enough $n$,
\[1\le V(R_n)p(R_n)\le 1+\frac{2\theta'}{\mu_n}\le 1+\frac{2\theta'2^{-m}}{\lfloor\mu_n2^{-m}\rfloor}.\]
\eqref{highdensity} gives the inequality in the display just before (4.27) in
\cite{dawsoniscoeperkins} and the rest of the proof of Lemma~4.8 of \cite{dawsoniscoeperkins} now proceeds as for branching Brownian motion in 
that reference, again using our modulus of continuity in Lemma~\ref{modcont}.  The idea is 
that the above bound shows that any point in the support of $X_{t_n}^{\mu_n}$ will have enough mass nearby from its ancestor at time $t_n-2^{-m}$ for $m$ large enough that it
will be arbitrarily close to a point in the support of the limiting $\hat X_t$. \qed

\medskip
\noindent{\bf Proof of Lemma~\ref{lemma:brw-range-A/r^2}.} It clearly suffices to consider $r\in\N$.  Let $T_0=\min\{n:Z_n(\R^d)=0\}$. For $n,r\in \N$ as in the statement of the lemma we have 
\begin{equation}\label{rRrel}r^2\le cK^2R^{d-1},
\end{equation}
which implies that 
\begin{equation}\label{binomialbound}
1\le V(R)p(R)=1+\frac{\theta}{R^{d-1}}\le 1+\frac{\theta cK^2}{r^2}.
\end{equation}
Clearly we have 
\begin{align*}
\P(\calR_n\cap([-r,r]^d)^c\neq\emptyset)\le \P(T_0>r^2)+\P(\calR_{r^2}\cap([-r,r]^d)^c\neq\emptyset,\, T_0\le r^2).
\end{align*}
By the extinction bound \eqref{eq:slightlysupercritical-survival} we have for sufficiently large $r$, $\P(T_0>r^2)\le 2K_{\ref{brwsurvival}}(\theta cK^2)r^{-2}$ (if not, choose sequences $\{r_n\}$ and $\{R_n\}$ both going to $\infty$ so that we can contradict the conclusion of Lemma~\ref{brwsurvival}.) Therefore for all $r\in\N$ we have $\P(T_0>r^2)\le Br^{-2}$ for some $B=B(cK^2)$.  Hence, it suffices to show
\begin{equation*}
\P(\calR_{r^2}\cap([-r,r]^d)^c\neq\emptyset)\le Ar^{-2} \text{ for $r\in\N$ as in \eqref{rRrel} and }A=A(c,K).
\end{equation*}
Assume that this is not the case. Then there are  sequences of natural numbers $r_n\to\infty$ and $R_n\to\infty$ such that
\begin{equation}\label{rRnrel}
r_n^2\le cK^2R_n^{d-1},
\end{equation}
and 
\begin{equation}\label{rangbad}
\lim_{n\to\infty}r_n^2\P(\calR_{r_n}\cap([-r_n,r_n]^d)^c\neq\emptyset)=\infty.
\end{equation}
Recall that we have chosen $p(R_n)$ so that $V(R_n)p(R_n)=1+\theta/R_n^{d-1}$ for some $\theta>0$. This and \eqref{rRnrel} show that $V(R_n)p(R_n)\le 1+\frac{\theta K^2c}{r_n^2}$.  The probability in \eqref{rangbad} will only increase if we raise $p(R_n)$ to $p_n$ so that $V(R_n)p_n=1+\frac{\theta K^2c}{r_n^2}$ and so we may use this modified Bernoulli probability for which \eqref{rangbad} holds, and if $\mu_n=r_n^2$, then
\begin{equation}\label{pasymp}
\mu_n(V(R_n)p_n-1)=\theta K^2c=:\theta'>0.
\end{equation}
Now consider $\tilde X^{\mu_n}$ as above, and recalling \eqref{tildeZ}, we have
\begin{equation}\label{Rbound1}
\P(\tilde\calR^{\mu_n}_1\cap((-1,1)^d)^c=\emptyset)=(1-\P(\calR_{r_n}\cap((-r_n,r_n)^d)^c\neq\emptyset))^{r_n^2}\to 0\text{ as }n\to\infty,
\end{equation}
by \eqref{rangbad}. On the other hand if $B(0,1)$ is the Euclidean open unit ball, then by Lemma~\ref{BRWlimitranges} (recall \eqref{pasymp})
\begin{align}\label{Rbound2}
\liminf_{n\to\infty}\P(\tilde\calR^{\mu_n}_1\cap((-1,1)^d)^c=\emptyset)&\ge\P_{\delta_0}(\hat\calR_1\cap((-1,1)^d)^c=\emptyset)\cr
&\ge\P_{\delta_0}(\hat X_s(\overline{B(0,1)}^c)=0\ \ \forall s\ge0)\cr
&=e^{-u(0)}>0,
\end{align}
where $u$ is the unique radial solution of $\Delta u=u^2$ on $B(0,1)$ and $u(x)\to\infty$ as $|x|\uparrow 1$ (Theorem 1 of \cite{iscoe88}).  Together \eqref{Rbound1} and \eqref{Rbound2} give us the contradiction which completes the proof. \qed

\medskip
\paragraph{Acknowledgements.} The second author thanks Xinghua Zheng for a number of helpful comments. 

\bibliographystyle{plain}

\begin{thebibliography}{10}
\bibitem{bharao}
R.~Bhattacharya and R.~Rao.
\newblock {\em Normal Approximation and Asymptotic Expansions}.
\newblock Society for Industrial and Applied Mathematics, 2010.

\bibitem{crabgrass}
M.~Bramson, R.~Durrett, and G.~Swindle.
\newblock Statistical mechanics of crabgrass.
\newblock {\em Annals of Probability}, 17: 444--481, 1989.

\bibitem{dawsoniscoeperkins}
D.~A. Dawson, I.~Iscoe, and E.~A. Perkins.
\newblock Super-{B}rownian motion: path properties and hitting probabilities.
\newblock {\em Probab. Theory Related Fields}, 83(1-2): 135--205, 1989.

\bibitem{durrettperkins}
R.~Durrett and E.~Perkins.
\newblock Rescaled contact processes converge to super-Brownian motion for
  $d\ge 2$.
\newblock {\em Prob. Theory Rel. Fields}, 114: 309--399, 1999.

\bibitem{finlay}
D.~Finlay.
\newblock Percolation and contact processes through invasion methods.
\newblock {\em USRA Report, UBC}, 2013.

\bibitem{remco}
R.~van der Hofstad and A.~Sakai.
\newblock Critical points for spread-out self-avoiding walk, percolation and 
     the contact process above the the upper critical dimensions.
\newblock {\em Prob. Theory Rel. Fields}, 132: 438-470, 2005.

\bibitem{legall}
J.F.~Le Gall and S.~Lin.
\newblock The range of tree-indexed random walk in low dimensions.
\newblock {\em Annals of Probabiity}, 43: 2701--2728, 2015.

\bibitem{hoeff}
W.~Hoeffding.
\newblock Probability inequalities for sums of bounded random variables.
\newblock {\em Journal of American Statistical Association}, 158: 13--30, 1963.

\bibitem{iscoe88}
I.~Iscoe.
\newblock On the supports of measure-valued critical branching brownian motion.
\newblock {\em Annals of Probabiity}, 16: 200-221, 1988.

\bibitem{lalleyperkinszheng}
S.~Lalley, E.~A. Perkins, and X.~Zheng.
\newblock A phase transition for measure-valued SIR epidemic processes.
\newblock {\em Annals of Probability}, 42(1): 237--310, 2014.

\bibitem{lalleyzheng10}
S.~Lalley and X.~Zheng.
\newblock Spatial epidemics and local times for critical branching random walks
  in dimensions 2 and 3.
\newblock {\em Probability Theory and Related Fields}, 148(3--4): 527--566,
  2010.

\bibitem{mollison}
D.~Mollison.
\newblock Spatial contact models for ecological and epidemic spread.
\newblock {\em J. Roy. Statist. Soc. B}, 39: 283--326, 1977.

\bibitem{penrose}
M.~D. Penrose.
\newblock On the spread-out limit for bond and continuum percolation.
\newblock {\em Annals of Applied Probability}, 3: 253--276, 1993.

\bibitem{perkins02}
E.~Perkins.
\newblock Dawson-Watanabe Superprocesses and Measure-valued Diffusions, in
{\em Lectures on Probability and Statistics}, 1781, 
\newblock Springer, Berlin, 2002.

\end{thebibliography}

%
%
%
%
%
%
%
%
%
%
\end{document}